\documentclass[11pt,a4paper]{article}
\usepackage[utf8]{inputenc}
\usepackage{amssymb,amsmath}
\usepackage{graphicx}
\usepackage{wrapfig}
\usepackage{amsfonts}
\usepackage{fullpage}
\usepackage{caption}
\usepackage{subcaption}
\usepackage{hyperref}
\usepackage{amsthm}
\numberwithin{equation}{section}
\usepackage[capitalise]{cleveref}
\usepackage{unicode}
\usepackage{fancyhdr}
\pagestyle{fancy}
\fancyhf{}

\fancyfoot[C]{\thepage}
\setlength{\headheight}{14pt}
\usepackage{algorithm}
\usepackage{algorithmic}

\title{Investigation of optimal control problems governed by a time-dependent Kohn-Sham model}
\date{\today}

\hypersetup{colorlinks=true,
linkcolor=black,
citecolor=black,
pdftitle={Optimal control with TDKS},
pdfauthor={Martin Sprengel},
pdfsubject={},
pdfkeywords={}
}

\newcommand{\R}{\ensuremath{\mathbb{R}}}
\newcommand{\N}{\ensuremath{\mathbb{N}}}
\newcommand{\C}{\ensuremath{\mathbb{C}}}

\newcommand{\skalar}[2]{\left( #1,\,#2 \right)}
\newcommand{\scalar}[3][L^2(\Omega;\C^N)]{\left(#2,\,#3\right)_{#1}}
\newcommand{\scalarT}[2]{\left( #1,\,#2 \right)_{L^2(0,T;\R)}}
\newcommand{\scalarTH}[2]{\left( #1,\,#2 \right)_{H^1(0,T;\R)}}

\newcommand{\scalarC}[2]{\left(#1,\,#2\right)_{\C}}
\newcommand{\scalarCN}[3][L^2(\Omega;\C^N)]{\left(#2,\,#3\right)_{#1}}

\newcommand{\dd}{\mathrm{d}}

\newcommand{\abl}[2]{\frac{\dd #1}{\dd #2}}
\newcommand{\pa}{\partial}
\newcommand{\pabl}[2]{\frac{\pa #1}{\pa #2}}

\renewcommand{\Im}{\operatorname{Im}}
\renewcommand{\Re}{\operatorname{Re}}

\newcommand{\F}[1]{\ensuremath{\mathcal{F}\left[#1\right]}}

\newcommand{\D}{\ensuremath{\operatorname{D}}}

\newcommand{\co}[1]{\overline{#1}}

\newcommand{\Lop}{\mathcal{L}}
\newcommand{\xone}{x_1} 
\newcommand{\xtwo}{x_2}

\newtheorem{theorem}{Theorem}
\newtheorem{lemma}[theorem]{Lemma}

\newtheorem{definition}[theorem]{Definition}
\theoremstyle{definition}
\newtheorem*{remark}{Remark}

\newtheorem*{assumption*}{Assumption}

\begin{document}
\author{
M.~Sprengel\thanks{Institut f\"ur Mathematik, Universit\"at W\"urzburg, Emil-Fischer-Strasse 30,
97074 W\"urzburg, Germany ({\tt martin.sprengel@mathematik.uni-wuerzburg.de}).}
         \and
G. Ciaramella\thanks{
Section de mathématiques, 
Université de Genève, 2-4 rue du Lièvre
1211 Genève 4, Switzerland ({\tt gabriele.ciaramella@unige.ch}).}
         \and 
A.~Borz{\`i}\thanks{Institut f\"ur Mathematik, Universit\"at W\"urzburg, Emil-Fischer-Strasse 30,
97074 W\"urzburg, Germany ({\tt alfio.borzi@mathematik.uni-wuerzburg.de}).}
       }

\maketitle

\begin{abstract}
Many application models in quantum physics and chemistry require to control multi-electron systems to achieve a desired target configuration. 
This challenging task appears possible in the framework of time-dependent density functional theory (TDDFT) that allows to describe these systems while avoiding the high dimensionality resulting from the multi-particle Schrödinger equation. For this purpose, the theory and numerical solution of optimal control problems governed by a Kohn-Sham TDDFT model are investigated, 
considering different objectives and a bilinear control mechanism.  
Existence of optimal control solutions and their characterization as solutions to Kohn-Sham TDDFT optimality systems are discussed. To validate this control framework, a time-splitting discretization of the optimality systems and a nonlinear conjugate gradient scheme are implemented. Results of numerical experiments demonstrate the computational capability of the proposed control approach. 
\end{abstract}

\section{Introduction}
Many models of interest in quantum physics and chemistry consist of multi-particle systems, that can be modelled by the multi-particle Schrödinger equation (SE). However, the space dimensionality of this equation increases linearly with the number of particles involved and the corresponding computational cost increases exponentially, thus making the use of the multi-particle SE prohibitive. This fact has motivated a great research effort 
towards an alternative formulation to the SE description that allows to compute the observables of a quantum multi-particle system using particle-density functions. This development, which starts with the Thomas-Fermi theory in 1927, reaches a decisive point with the works of Hohenberg, Kohn, and Sham \cite{HohenbergKohn:PhysRev:1964, KohnSham1965} that propose an appropriate way to replace a system of $N$ interacting particles in an external potential  $V_{ext}$ by another system of non-interacting particles with an external potential $V_{ext}+V_{Hxc}$, such that the two models have the same electronic density. These works and many following ones focused on the computation of stationary (ground) states and obtained successful results that motivated the extension of the density function theory (DFT) theory to include time-dependent phenomena. This extension was first proposed by Runge and Gross in \cite{RungeGross1984}, and 
further investigated from a mathematical point of view in the work of van Leeuwen \cite{vanLeeuwen99}. We refer to \cite{EngelDreizler2011} 
for a modern introduction to DFT and to \cite{LectureNotesTDDFT} for 
an introduction to time-dependent DFT (TDDFT).

Similar to the stationary case, the Runge–Gross theorem proves that, given an initial wavefunction configuration, there exists a one-to-one mapping between the potential in which the system evolves and the density of the system. 
Therefore, under appropriate assumptions, given a SE for a system of interacting particles in an external potential, there exists another SE model, unique up to a purely time-dependent function in the potential \cite{vanLeeuwen99}, of a non-interacting system with an augmented potential whose solution provides the same density as the solution to the original SE problem. 
We refer to this TDDFT model as the time-dependent Kohn-Sham (TDKS) equation. 
Notice that the external potential $V_{ext}$ modelling the interaction of the particles 
(in particular, electrons) with an external (electric) field enters without modification in both the multi-particle SE model and the TDKS model. 

This latter fact is important in the design of control strategies for multi-particle quantum systems because control functions usually enter in the SE model as external time-varying potentials. 
Therefore control mechanisms can be determined in the TDDFT framework that are valid for the original multi-particle SE system. Recently, various quantum mechanical optimal control problems governed by the SE have been studied in the literature, see for example \cite{VonWinckelBorzi2008}, \cite{VonWinckelBorziVolkwein2009} and \cite{LiteraturSalomon}. Moreover, quantum control problems governed by TDDFT models have already been 
investigated; see, e.g., \cite{CastroWerschnikGross2012} and have been implemented in TDDFT codes as the well-known Octopus \cite{octopuscite}. 
However, the available optimization schemes are mainly based on less competitive Krotov's method and consider only finite-dimensional parameterized controls. Furthermore, much less is known on the theory of the TDDFT 
optimal control framework and on the use and analysis of more efficient optimization schemes that allow to compute control functions belonging to a much larger function space.  

We remark that the functional analysis of optimization problems 
governed by the TDKS equations and the investigation of optimization schemes requires the mathematical foundation of the governing model. At the best of our knowledge, only few contributions addressing this issue are available; we refer to \cite{RuggenthalerPenzVanLeeuwen2015, Jerome2015, SprengelCiaramellaBorzi2016} for results concerning the existence and uniqueness of solutions to the TDKS equations.

This work contributes to the field of optimal control theory for multi-particle quantum systems presenting a theoretical analysis of optimal control problems governed by the TDKS equation. 
To validate the proposed framework, we implement an efficient approximation and optimization scheme for these problems. 

This paper is organized as follows. 
In Section \ref{sec:model}, we illustrate multi-particle SE models and the Kohn-Sham (KS) approach to TDDFT. Since these models are less known in the PDE optimal control community, and the literature 
on these problems is sparse, we make a special effort to provide a detailed presentation and to present results that 
are instrumental for the discussion that follows. 
In Section \ref{sec:oct}, we state a class of optimal control problems and discuss the related first-order optimality systems. 
The details of the derivation of the optimality system are elaborated in the \cref{sec:DerivationOfOptSystem}. 
The analysis of the optimal control problems is presented in Section \ref{sec:theory}. We show existence of optimal solutions to the control problems and prove necessary optimality conditions. 
Section \ref{sec:implementation} is dedicated to suitable 
approximation and numerical optimization schemes are discussed. We consider time-splitting schemes \cite{BaoJinMarkowich2002,StrangFaou2014} 
and discuss their accuracy properties. To solve the optimality systems, we implement a nonlinear conjugate gradient scheme.
In Section \ref{sec:experiments}, results of numerical experiments are presented that demonstrate the effectiveness of the proposed control framework. A section of conclusions completes this work.

\section{The TDKS model}\label{sec:model}

In the Schrödinger quantum mechanics framework, the state of a $N$ electrons system is described by a wave function $\Psi$, whose time evolution is governed by the following Schrödinger equation (SE)
\begin{equation}
  i\frac{\partial}{\partial t}\Psi(x_1,\dotsc x_N,t)=H\Psi(x_1,\dotsc x_N,t),\label{intro:multiSE}
\end{equation}
where $x_1,\dotsc,x_N\in\R^n$ are the position vectors of the $N$ particles. 
We use atomic units, i.e. $\hbar=4\pi\epsilon_0=1$, $m_e=\frac{1}{2}$.

The Hamiltonian $H$ consists of a kinetic term, the Coulomb interaction between the charged particles (electrons), $\sum_{i<j}\frac{1}{|x_i-x_j|}$, and an external potential, $V_{ext}$. We have
\begin{align}\label{Model:TheModel}
	H=\sum_{i=1}^N \left(-\nabla_i^2+V_{ext}(x_i, t)\right)+\sum_{i<j}\frac{1}{|x_i-x_j|},
\end{align}
where $|\cdot|$ is the Euclidean norm and $\nabla_i$ is the $n$-dimensional vector gradient with respect to $x_i$.

The Pauli principle states that the wave function of a system of electrons has to be antisymmetric with respect to the exchange of two coordinates. For this purpose, given $N$ orthogonal single particle wave functions (orbitals), $\psi_j(x,t)$, that correspond to the Hamiltonian of the $j$th particle, $H_j=\left(-\nabla_j^2+V_{ext}(x_j, t)\right)$ one can build the following antisymmetric wave function 
\begin{align*}
	\Psi(x_1,x_2,\dotsc, x_N,t)&=\frac{1}{\sqrt{N!}}\det
	\begin{pmatrix}
		 \psi_1(x_1, t) &  \psi_2(x_1, t) & \cdots &  \psi_N(x_1,t)\\
		\vdots		& \vdots	& \vdots & \vdots\\
		 \psi_1(x_N, t) &  \psi_2(x_N, t) & \cdots &  \psi_N(x_N,t)
	\end{pmatrix}.
\end{align*}
This is called Slater determinant. This wave function solves (\ref{intro:multiSE}) if the particles do not interact. However, in the presence of an interaction potential, the solution to (\ref{intro:multiSE}) will be given by an infinite sum of Slater determinants.

This fact shows that the effort of solving a multi-particle SE increases exponentially with the number of particles. To avoid this curse of dimensionality, the approach of DFT is to consider, instead of the wave function on a $nN$-dimensional space, the corresponding electronic density defined of the physical space of $n$-dimensions 
given by  
\begin{align}
\rho(x,t) = N \int |\Psi(x, \dotsc x_N,t)|^2 \dd x_2 \cdots \dd x_N.
\end{align}
	For a Slater determinant, one finds that the corresponding density is as follows
	\begin{align}
	\rho(x,t)=\sum_{i=1}^N | \psi_i(x,t)|^2,
\end{align}
where $\psi_i$ represents the wave function of the $i$th particle.

The DFT approach of Kohn and Sham \cite{KohnSham1965} to 
model multi-particle problems was to replace the system of $N$ interacting particles, subject to an external potential $V_{ext}$, by another system of non-interacting particles subject to an 
augmented potential $V_{ext}+V_{Hxc}$, 
such that the two models provide the same density.
Van Leeuwen \cite{vanLeeuwen99} proved that such a system exists under appropriate conditions on the potentials and on the resulting densities. In particular, for this proof it is 
required that the wave function is twice continuously differentiable in space and analytic in time and the potential has to have finite expectation values and be differentiable in space and analytic in time.  

Based on this development, we consider the time-dependent Kohn-Sham system given by 
\begin{eqnarray}
i\pabl{ \psi_j}{t}(x,t)&=&\left(-\nabla^2+V_{ext}(x,t)+V_{Hxc}(x,\rho) \right)  \psi_j(x,t),\label{KSequations}\\
{ \psi_j}(x,t)&=&  \psi_j^0 (x), \qquad j=1, \dotsc,N. \label{KSinitcond}
\end{eqnarray}
It is not trivial to get an initial condition that appropriately represents the interacting system because it may not have a Slater determinant as starting wavefunction.
A common choice is to solve the ground state DFT problem and take the eigenstates corresponding to the lowest $N$ eigenvalues as $\psi_j^0$.

Notice that an TDKS system is formulated in $n$ spatial dimensions and consists of $N$ coupled Schrödinger equations.   
With the appropriate choice of $V_{Hxc}$, which contains the coupling through the dependence on $\rho$, the solution to \eqref{KSequations}--\eqref{KSinitcond} provides the correct density of the original system, so that all observables, which can be formulated in terms of the density, can be determined by this method.

The main challenge of the DFT framework is to construct KS potentials that encapsulate all the multi-body physics. One class of approximations is called the local density approximation (LDA) 
\cite{EngelDreizler2011}, because in this approach, the KS potential at some point $x$ only depends on the value of the density $\rho(x)$ at this specific point. We use the adiabatic LDA, which means that LDA is applied at every time separately such that $V_{Hxc}(x,t)=V_{Hxc}(\rho(x,t))$. 
Notice that, if one allows $V_{Hxc}(t)$ to depend on the whole history $V_{Hxc}(\tau)$, $0\leq \tau \leq t$, the resulting adjoint equation would be an integro-differential equation, which would be much more involved to solve.

We remark that the Kohn-Sham potential $V_{Hxc}$ is usually split into three terms, that is, the Hartree potential, and the exchange and correlation potentials as follows
\begin{align}
\label{SumPotentials}
	V_{Hxc}(x,\rho(x,t)) =V_H(x,\rho(x,t))+V_x(\rho(x,t))+V_c(\rho(x,t)).
\end{align}
The Hartree potential $V_H$ models electrons interaction due to the Coulomb force. This term dominates $V_{Hxc}$, while the other two parts represent quantum mechanical corrections. Later, 
we refer to the exchange-correlation part of the potential also 
as $V_{xc}=V_x+V_c$. Notice that the classical electric field is given by $\phi(x)=\int_\Omega \frac{\rho_c(y)}{|x-y|}\dd y$, where $x$ and $y$ are positions in space and $\rho_c$ is the charge density. Since we are using natural units, charge and particle densities become the same and we have the following 
\begin{align}
V_H(x,\rho)=\int_\Omega \frac{\rho(y)}{|x-y|}\dd y .
\end{align}

In quantum mechanics, the Pauli exclusion principle states that two electrons cannot share the same quantum state. This results in a repulsive force between the electrons. In DFT this feature is modeled by introducing two terms: the exchange and the correlation potentials. The exchange potential $V_x$ contains the Pauli principle for a homogeneous electron gas. In the LDA framework, it can be calculated explicitly as follows; see, e.g., \cite{ExchangePotential_Constantin_PRB, ParrYang1989}
\begin{align*}
	V_x^{2D}(\rho)=-\sqrt{\frac{8}{\pi}} \sqrt{\rho} \, , && V_x^{3D}(\rho)=-\sqrt[3]{\frac{3}{\pi}} \sqrt[3]{\rho}.
\end{align*}

However, quantum mechanics is not applicable for very short distances such that relativistic effects start to play a role. Therefore, we introduce a cut-off of the potential at unphysically large densities, while preserving all required properties in the range of validity of the DFT framework.

We define the exchange potential as follows
\begin{align}\label{Vxcuttoff}
	V_x:[0,\infty)\rightarrow [0,p(2R)], && V_x(\rho)=\alpha_n\begin{cases}
		\sqrt[n]{\rho}	& \rho \leq R\\
		p(\rho) & R<\rho<2R\\ 
		p(2R)	& \rho \geq 2 R
	\end{cases},
\end{align}
where 
\begin{equation}
p(\rho)=\frac{(n+1)  R^{\frac{1}{n}-4}}{4 n^2}\rho^4-\frac{(4 n+5) 
	   R^{\frac{1}{n}-3}}{3 n^2}\rho^3+\frac{2 (n+2) 
	   R^{\frac{1}{n}-2}}{n^2}\rho^2-\frac{4 R^{\frac{1}{n}-1}}{n^2} \rho+\frac{(12n^2-11n+17) R^{\frac{1}{n}}}{12 n^2},
\end{equation}
with $\alpha_2=-\sqrt{\frac{8}{\pi}}$ and $\alpha_3=-\sqrt[3]{\frac{3}{\pi}}$ and $R$ sufficiently large; e.g., $R \approx (10^{57}\frac{1}{\textrm{m}})^3N$.
This potential is twice continuously differentiable and globally bounded.

The remaining part of the interaction is called the correlation potential $V_c$. No analytic expression is known for it. However, it is possible to determine the shape of this potential as a function of the density using Quantum Monte Carlo methods; see, e.g., \cite{Attaccalite}. In Figure \ref{QMCforVC}, we plot the shape of $V_c$ used in the numerical experiments in Section \ref{sec:experiments}. 
\begin{figure}[ht]
\begin{center}
	\includegraphics[page=7]{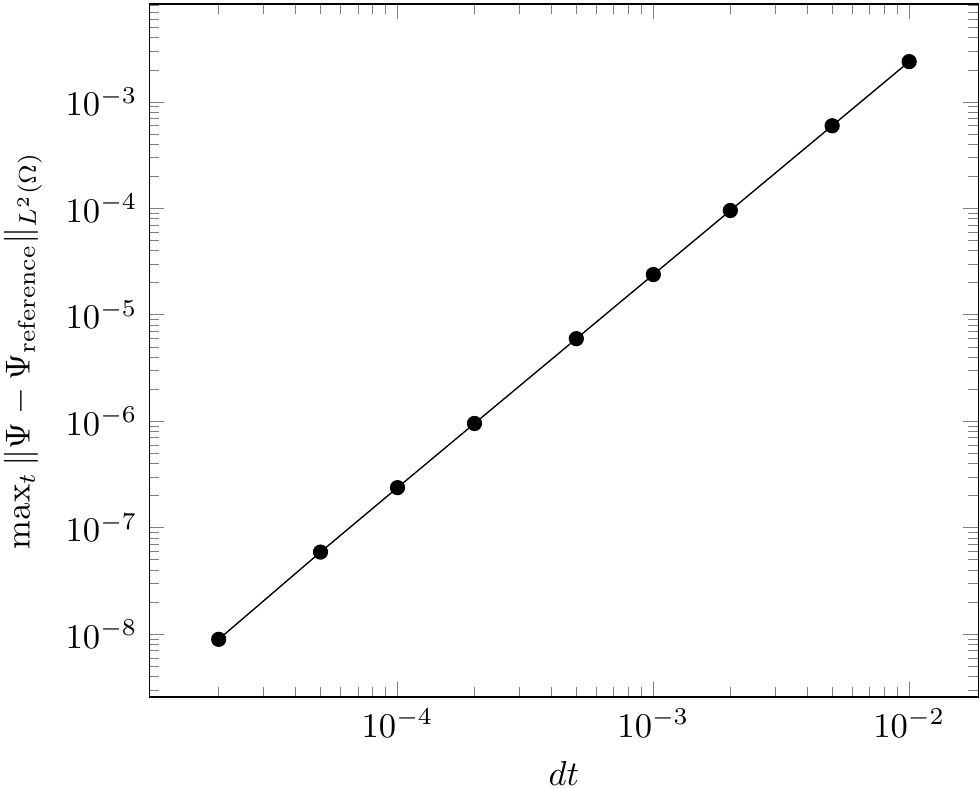}
\end{center}
\caption{The Quantum Monte Carlo fit used in the numerical experiments of this work from \cite{Attaccalite}. The values for the limits are $\lim_{\rho\rightarrow 0} V_c(\rho)=0$ and $\lim_{\rho\rightarrow \infty} V_c(\rho)=-0.1925$.}
\label{QMCforVC}
\end{figure}

All correlation potentials commonly used are of similar structure; see, e.g., \cite{MarquesOliveiraBurnus2012}. They are zero for zero density and otherwise negative, while having a convex shape. Furthermore, they are bounded by $V_x$ in the sense that $|V_c(\rho)|< |V_x(\rho)|$ for all $\rho\in \R^+$. 

It is clear that in applications, confined electron systems subject to external control are of paramount importance. The confinement is obtained considering external potentials such that $\Psi$ is non-zero only on a bounded domain $\Omega$.
For this reason, we denote by 
$V_0$ a confining potential that may represent the attracting potential of the nuclei of a molecule or the walls of a quantum dot. A typical model is the harmonic oscillator potential, 
$V_0(x)=u_0 \, x^2$.
 
A control potential aims at steering the quantum system to change its configuration towards a target state or to optimize the value of a given observable. In most cases, this results in a change of energy that necessarily requires a time-dependent interaction of the electrons with an external electro-magnetic force. For this purpose, we introduce a control potential with the following structure $V_{ext}^c(x,t)=u(t) \, V_u(x)$, where $u(t)$ has the role of a modulating amplitude. A specific case is the dipole control potential, $V_{ext}^c(x,t)=u(t) \, x$. 

In our the TDKS system, we consider the following external potential 
\begin{align*}
	V_{ext}(x,t,u)=u(t) V_u(x)+V_0(x).
\end{align*}
In particular, we consider the control of a quantum dot by a changing gate voltage modeled by a variable quadratic potential, $V_{u}(x)=x^2$, and a laser control in dipole approximation, $V_{u}(x)=x \cdot p$, with a polarization vector $p$.

With this setting, we have completely specified our TDKS 
model. Next, we discuss the corresponding functional 
analytic framework. 

We consider our TDKS model defined on a bounded domain $\Omega \subset \R^n$, $n=2,3$, with $\pa\Omega\in C^2$ and $t\in (0,T)$, together with initial conditions $ \psi_j(\cdot,0)= \psi_j^{0}(\cdot)\in H^1_0(\Omega;\C)$, $j=1,\ldots,N$. For brevity, we denote our KS wavefunction by $\Psi=( \psi_1,\ldots, \psi_N)$, and $\Psi^0=( \psi_1^0,\ldots, \psi_N^0)$. Therefore, we have $|\Psi(x,t)|^2 = \rho(x,t)$.

We define the following function spaces. $X=L^2(0,T; H^1_0(\Omega; \C^N))$ and $W=\{\Psi \in X | \Psi'\in X^*\}$ with the norms $\|\Psi\|_X^2=\int_0^T\|\Psi\|_{L^2}^2+\|\nabla \Psi\|_{L^2}^2 \dd t$ and $\|\Psi\|_W^2=\|\Psi\|_X^2+\|\Psi'\|_{X^*}^2$, where $X^*$ is the dual space of $X$; we also need $Y=L^2(0,T;L^2(\Omega;\C^N))$ which is endowed with the usual norm.

To improve readability of the analysis that follows, we 
write the potentials in \eqref{SumPotentials} as functions of 
$\Psi$ instead of $\rho=|\Psi|^2$. We shall also omit the 
explicit dependence of $V_H$ on $x$ if no confusion may arise.

\begin{lemma}
The exchange potential term $V_x(\Psi)\Psi$ is Lipschitz continuous,
i.e. $\|V_x(\Psi)\Psi-V_x(\Upsilon)\Upsilon\|_{L^2(\Omega;\C^N)}\leq L\|\Psi-\Upsilon\|_{L^2(\Omega;\C^N)}$ and $\|V_x(\Psi)\Psi-V_x(\Upsilon)\Upsilon\|_{X^*}\leq L\|\Psi-\Upsilon\|_X$.
\end{lemma}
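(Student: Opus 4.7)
My plan is to reduce both estimates to a single pointwise Lipschitz bound for the Nemytskii map $g\colon\C^N\to\C^N$, $g(z):=V_x(|z|^2)z$, and then lift it to the two function-space norms by integration over $\Omega$, respectively $\Omega\times(0,T)$, combined with the continuous embedding $L^2(\Omega;\C^N)\hookrightarrow H^{-1}(\Omega;\C^N)$.

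The main step is to show that $g$ is globally Lipschitz on $\C^N$. Writing $V_x=f$ as a scalar function of $\rho$, the construction in~\eqref{Vxcuttoff} makes $f$ uniformly bounded and of class $C^2$ on $(0,\infty)$. The derivative $f'$ is singular at $\rho=0$ of order $\rho^{1/n-1}$, but the key cancellation is that $\rho f'(\rho)$ remains uniformly bounded on $[0,\infty)$: on $[0,R]$ one has $\rho f'(\rho)=(\alpha_n/n)\rho^{1/n}$, which is bounded by $(|\alpha_n|/n)R^{1/n}$; the patching polynomial is smooth on the compact interval $[R,2R]$; and $f'\equiv 0$ on $[2R,\infty)$. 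Differentiating $g$ for $z\neq 0$ yields
\begin{equation*}
Dg(z)\,h \;=\; f(|z|^2)\,h \;+\; 2\,f'(|z|^2)\,\Re(\bar z\cdot h)\,z,
\end{equation*}
whence $|Dg(z)\,h|\le \bigl(\|f\|_\infty+2\,|f'(|z|^2)|\,|z|^2\bigr)|h|\le L\,|h|$ uniformly in $z\neq 0$. Since $g$ is also continuous at the origin (indeed $|g(z)|\le\|f\|_\infty|z|$), a standard fundamental-theorem-of-calculus argument along the segment joining any $z,w\in\C^N$ (splitting at the origin if the segment passes through it, and using continuity to pass to the limit) produces the pointwise bound $|g(z)-g(w)|\le L\,|z-w|$ for all $z,w\in\C^N$.

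With this pointwise estimate in hand, the first inequality follows by applying it at each $x\in\Omega$, squaring, and integrating. The second follows by integrating further in $t$, which gives $\|V_x(\Psi)\Psi-V_x(\Upsilon)\Upsilon\|_Y\le L\|\Psi-\Upsilon\|_Y$, and then chaining the continuous embeddings $X\hookrightarrow Y\hookrightarrow X^*$: Poincaré's inequality yields $\|\cdot\|_{X^*}\le C\|\cdot\|_Y$, and $H^1_0\hookrightarrow L^2$ yields $\|\cdot\|_Y\le\|\cdot\|_X$. Absorbing $C$ into $L$ gives the second Lipschitz estimate.

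The main obstacle is the first step. At first glance $V_x$ itself is only $1/n$-Hölder near the vacuum $\rho=0$, which would preclude any Lipschitz bound on the composition $\Psi\mapsto V_x(|\Psi|^2)$ alone. The resolution is that the extra factor $\Psi$ in the product $V_x(|\Psi|^2)\Psi$ supplies precisely the missing power of $|\Psi|$ needed to absorb the singularity of $f'$ at the orbital level, and this cancellation is encoded in the uniform bound on $\rho f'(\rho)$.
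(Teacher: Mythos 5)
Your proof is correct and follows essentially the same route as the paper: establish that the pointwise map $z\mapsto V_x(|z|^2)z$ is globally Lipschitz on $\C^N$, then lift to $L^2(\Omega;\C^N)$ by integration and to $X\to X^*$ via the Gelfand triple $X\hookrightarrow Y\hookrightarrow X^*$. Your careful treatment of the apparent singularity of $V_x'$ at $\rho=0$ (the cancellation encoded in the boundedness of $\rho f'(\rho)$) supplies the justification for a step the paper merely asserts, namely that $z\mapsto V_x(z)z$ is continuously differentiable with bounded derivative.
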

\begin{proof}
	The function $f:\C^N\rightarrow \C^N$, $f(z)=V_x(z)z$ is continuously differentiable with bounded derivative, hence Lipschitz continuous with Lipschitz constant $L$ from $\C^N$ to $\C^N$. With this preparation, we have Lipschitz continuity from $L^2$ to $L^2$ 
	as follows 
	\begin{align*}
		\int_\Omega |V_x(\Psi(x,t))\Psi(x,t)-V_x(\Upsilon(x,t))\Upsilon(x,t)|^2 \dd x \leq\int_\Omega L^2|\Psi(x,t)-\Upsilon(x,t)|^2 \dd x .
	\end{align*}
Similarly, we have Lipschitz continuity from $X$ to $X^*$ as follows 
	\begin{align*}
		&\|V_x(\Psi)\Psi-V_x(\Upsilon)\Upsilon\|_{X^*}^2\leq \|V_x(\Psi)\Psi-V_x(\Upsilon)\Upsilon\|_Y^2 \\
		&=\int_0^T \|V_x(\Psi(x,t))\Psi(x,t)-V_x(\Upsilon(x,t))\Upsilon(x,t)\|_{L^2(\Omega;\C^N)}^2\dd t\\
		&\leq\int_0^T L^2\|\Psi(x,t)-\Upsilon(x,t) \|_{L^2(\Omega;\C^N)}^2\dd t = L^2 \|\Psi-\Upsilon\|_Y^2\leq L^2 \|\Psi-\Upsilon\|_X^2,
	\end{align*}
	where we use the Gelfand triple $X\hookrightarrow Y \hookrightarrow X^*$ and the fact that $V_x(\Psi)\Psi\in Y$ as $V_x(\Psi)\in L^\infty(0,T; L^\infty(\Omega;\R))$.
\end{proof}

Throughout the paper, we make the following assumptions on the potentials.
\begin{assumption*}
\begin{enumerate}
	\item \label{assumptionVcBounded} The correlation potential  $V_c$ is uniformly bounded in the sense that\linebreak $|V_c(\Psi(x,t))|\leq K,$ $\forall x\in\Omega$, $t\in [0,T]$, $\Psi\in Y$; this corresponds to the correlation potentials from the Libxc library	\cite{MarquesOliveiraBurnus2012} applying a similar approach to \eqref{Vxcuttoff}.
	\item The correlation potential term $V_c(\Psi)\Psi$ is continuously real-Fréchet differentiable (see Definition \ref{TDKS:defRealDiff}) with bounded derivative, c.f. Lemma \ref{lem:VxPsiFrechet} for the same result on the exchange potential. \label{ass:VcFrechet}
	\item The confining potential and the spacial dependence of the control potential are bounded, i.e. $V_0, V_u \in L^\infty(\Omega;\R)$; as we consider a finite domain, this is equivalent to excluding divergent external potentials.
	\item The control is $u \in H^1(0,T;\R)$. This is a classical assumption in optimal control, see, e.g. \cite{VonWinckelBorzi2008}.
\end{enumerate}	
\end{assumption*} 

The following theorem from \cite{SprengelCiaramellaBorzi2016} states existence and uniqueness of \eqref{KSequations}--\eqref{KSinitcond} in a setting well-suited for optimal control. 
\begin{theorem}\label{thm:uniquesol}
	The weak formulation of \eqref{KSequations}, with $V_{ext}\in H^1(0,T;L^\infty(\Omega;\R))$, $\Psi(0)=\Psi^0 \in L^2(\Omega; \C^N)$,  admits a unique solution in $W$, that is, there 
	exists $\Psi\in X$, $\Psi'\in X^*$, such that
	\begin{equation}\label{eq:KSweak}
	\begin{split}
	i \scalarCN{ \partial_t \Psi(t)}{ \Phi }
	=& \scalarCN{ \nabla \Psi }{ \nabla \Phi }
		+ \scalarCN{ V_{ext}(\cdot,t,u) \Psi }{ \Phi }\\
		& + \scalarCN{ V_{Hxc}(\Psi(t)) \Psi(t)}{ \Phi },
	\end{split}
	\end{equation}
 for all $\Phi \in H_0^1(\Omega; \C^N)$ and a.e. in $(0,T)$.  
	
If $\Psi^0\in H^1_0(\Omega;\C^N)$ and $\partial \Omega \in C^2$, 
then the unique solution to \eqref{eq:KSweak} is as follows 
	\begin{align}
		\Psi \in L^2(0,T; H^2(\Omega; \C^N)) \cap L^\infty(0,T; H^1_0(\Omega; \C^N)) ;
	\end{align}
if in addition $\Psi^0\in H^2(\Omega;\C^N)$, 
 we have $\Psi\in L^\infty(0,T;H^2(\Omega;\C^N)\cap H_0^1(\Omega;\C^N))$.
\end{theorem}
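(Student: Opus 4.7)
The plan is to prove this by a Banach fixed-point argument applied to a linearized version of the TDKS system, combined with linear Schrödinger theory for the higher-regularity statements. First, I would freeze the nonlinearity: given $\widetilde\Psi$ in a suitable space, consider the linear problem
\begin{equation*}
i\partial_t \Psi = -\nabla^2 \Psi + V_{ext}(\cdot,t,u)\Psi + V_{Hxc}(\widetilde\Psi)\Psi, \qquad \Psi(0)=\Psi^0.
\end{equation*}
Since $V_{ext}\in H^1(0,T;L^\infty)$, $V_x(\widetilde\Psi)$ and $V_c(\widetilde\Psi)$ are bounded by the standing assumptions, and the Hartree term $V_H(\cdot,\widetilde\Psi)$ is bounded on the bounded domain $\Omega$ whenever $|\widetilde\Psi|^2\in L^1$ (using that $1/|x-y|$ is locally integrable in dimensions $n=2,3$), the effective potential lies in $L^\infty(0,T;L^\infty(\Omega;\R))$. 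Standard Lions–Magenes theory for the linear Schrödinger equation with bounded time-dependent potential then yields a unique weak solution $\Psi\in W$, and testing against $\Psi$ and taking imaginary parts gives $\|\Psi(t)\|_{L^2}=\|\Psi^0\|_{L^2}$.

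Next, I would define the solution map $\mathcal{S}:\widetilde\Psi\mapsto\Psi$ on a closed ball of $C([0,T^*];L^2(\Omega;\C^N))$ and show it is a contraction for $T^*$ sufficiently small. The Lipschitz bound on $V_x(\Psi)\Psi$ established in the preceding lemma, the analogous bound on $V_c(\Psi)\Psi$ coming from Assumption \ref{ass:VcFrechet}, and a Lipschitz estimate on the Hartree nonlinearity (quadratic in $\Psi$, locally Lipschitz on bounded sets via boundedness of convolution with $1/|x|$ on $\Omega$) combine to give
\begin{equation*}
\|\mathcal{S}(\widetilde\Psi_1)-\mathcal{S}(\widetilde\Psi_2)\|_{C([0,T^*];L^2)} \le C\,T^*\,\|\widetilde\Psi_1-\widetilde\Psi_2\|_{C([0,T^*];L^2)}.
\end{equation*}
The key manipulation is to subtract the two linearized equations, test against the difference of the two solutions, and take imaginary parts so the kinetic term disappears, leaving only potential-difference terms controllable by the Lipschitz constants. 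Banach's fixed-point theorem produces a unique local solution, and because $\|\Psi(t)\|_{L^2}$ is conserved the a priori bound is time-independent, so the standard continuation argument extends the solution to the whole interval $[0,T]$.

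For the higher regularity claims I would bootstrap with the linear theory, treating $V_{\text{eff}}\Psi$ as a source. Once $\Psi\in W$ is known and $\Psi^0\in H^1_0$, energy estimates at the $H^1$ level (testing against $-\Delta\Psi$ and using $\partial\Omega\in C^2$) yield $\Psi\in L^\infty(0,T;H^1_0)$, and then rewriting the PDE as $-\Delta\Psi = i\partial_t\Psi - V_{\text{eff}}\Psi$ and invoking elliptic regularity for each $t$ gives $\Psi\in L^2(0,T;H^2)$. If in addition $\Psi^0\in H^2$, one differentiates the equation in time, performs the analogous energy estimate on $\partial_t\Psi$ (using $V_{ext}\in H^1(0,T;L^\infty)$ and the time-differentiability of $V_{Hxc}$ through $\Psi$), and then reapplies elliptic regularity to promote the conclusion to $\Psi\in L^\infty(0,T;H^2\cap H^1_0)$.

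The main obstacle is choosing a function space in which the fixed-point map simultaneously contracts and is well-defined at the low regularity $\Psi^0\in L^2$. The Hartree nonlinearity is the most delicate, because it is nonlocal and quadratic in $\Psi$; the cleanest route is to pair the $L^2$ contraction with an a priori $L^2$ mass conservation so that the Hartree term's quadratic growth is absorbed into a uniform bound, and then to postpone all $H^1$- and $H^2$-level estimates to the regularity step where they can be obtained by linear theory with the already-constructed $\Psi$ plugged into $V_{Hxc}$.
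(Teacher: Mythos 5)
The paper does not prove this theorem itself; it imports it verbatim from \cite{SprengelCiaramellaBorzi2016}, so there is no in-paper argument to compare against step by step. Judged on its own terms, your plan has the right overall shape (freeze the nonlinearity, contract, conserve mass, continue, bootstrap regularity), but it contains one genuine and load-bearing error: the claim that the Hartree potential $V_H(\widetilde\Psi)$ is bounded on $\Omega$ whenever $|\widetilde\Psi|^2\in L^1$. Convolving an $L^1$ density with the locally integrable kernel $1/|x|$ gives, by Young's inequality, only $V_H\in L^r(\Omega)$ for $r<n$; to conclude $V_H\in L^\infty$ you need $\rho\in L^q$ with $q>n/(n-1)$ (so that $1/|x-\cdot|\in L^{q'}(\Omega)$), i.e.\ $\Psi\in L^{2q}$ with $2q>2n/(n-1)$, which $L^2$ does not give. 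A density concentrating like $|y|^{-n+\epsilon}$ is in $L^1$ yet produces an unbounded $V_H$. As a consequence the ``effective potential in $L^\infty(0,T;L^\infty)$'' premise fails, $V_H(\widetilde\Psi)\Psi$ need not lie in $L^2$, and your solution map $\mathcal{S}$ on a ball of $C([0,T^*];L^2)$ is not even well-defined, let alone a contraction. The exchange and correlation terms are fine at the $L^2$ level (they are pointwise bounded nonlinearities), but the Hartree term is the one piece your space cannot absorb.

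The repair is visible in the estimates this paper quotes from the reference: the Hartree bound used throughout is of the form $\|V_H(\Psi)\Phi\|_{L^2}\le C\|\Psi\|_{H^1}\|\Psi\|_{L^2}\|\Phi\|_{L^2}$ (cf.\ the proof of Lemma \ref{lem:VHFrechet} and the bound $\|V_H(\Psi_n)\Psi_n\|\le c\|\Psi_n\|_X^2\|\Psi_n\|_Y$ in Section \ref{sec:ExistenceMinimizer}). So the iteration or Galerkin scheme must be closed at the $H^1$ level, and the a priori bound needed for continuation is an $H^1$ energy estimate obtained by Gronwall using $V_{ext}\in H^1(0,T;L^\infty)$ --- mass conservation alone does not control the Hartree nonlinearity. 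This also means the $\Psi^0\in L^2$ case of the statement cannot be reached by an $L^2$ contraction; the cited reference obtains the weak solution by a Galerkin/compactness argument in $X$ with the duality pairing in $X^*$, not by Picard iteration in $C([0,T];L^2)$. Your bootstrap for the $H^1_0$ and $H^2$ regularity statements is otherwise the standard route and would go through once the existence step is rebuilt on the correct space.
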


By the continuous embedding $W\hookrightarrow C([0,T];L^2(\Omega; \C^N))$, see e.g. \cite[p. 287]{Evans2010}, the solution is continuous in time. 

Similar problems have been studied in \cite{Jerome2015}:
\begin{theorem}
Assuming that $V_{ext}\in C^1([0,T];C^1(\overline{\Omega};\R))$ and $V_{ext}\geq 0$, and a Lipschitz condition on $V_{xc}$ and a continuity assumption on $V_{xc}$, then \eqref{eq:KSweak} 
with $\Psi^0 \in H_0^1(\Omega; \C^N)$ has a
unique solution in $C([0,T];H^1_0(\Omega; \C^N)) \cap C^1([0,T];H^{-1}(\Omega; \C^N))$.  
\end{theorem}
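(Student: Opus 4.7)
The natural strategy is to treat the TDKS equation as a semilinear problem of the form $i\partial_t \Psi = A(t)\Psi + F(\Psi)$, where $A(t) = -\Delta + V_{ext}(\cdot,t)$ with domain $H^2\cap H^1_0$ and $F(\Psi) = V_{Hxc}(\Psi)\Psi$, and to combine Kato-type theory for the linear time-dependent part with a Banach fixed point argument for the nonlinearity. The target regularity space $C([0,T];H^1_0) \cap C^1([0,T];H^{-1})$ is natural because $-\Delta$ maps $H^1_0$ into $H^{-1}$ and the nonlinearity maps $H^1_0$ into $L^2 \hookrightarrow H^{-1}$.

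First I would construct a unitary evolution family $\{U(t,s)\}_{0\le s\le t\le T}$ on $L^2(\Omega;\C^N)$ associated with the family $\{A(t)\}_{t\in[0,T]}$. Because $V_{ext}\in C^1([0,T];C^1(\overline{\Omega};\R))$ and $V_{ext}\geq 0$, each $A(t)$ is self-adjoint and nonnegative with time-independent domain $D(A)=H^2\cap H^1_0$, and $t\mapsto A(t)(A(0)+I)^{-1}$ is continuously differentiable in operator norm. Kato's theorem (e.g. Reed--Simon X.70) then yields a strongly continuous unitary evolution family $U(t,s)$ that leaves $D(A)$ invariant and satisfies $\partial_t U(t,s)\varphi = -iA(t)U(t,s)\varphi$ for $\varphi\in D(A)$. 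Since $A(t)^{1/2}$ has form domain $H^1_0$ with norms equivalent uniformly in $t$ (using the $L^\infty$ bound on $V_{ext}$), $U(t,s)$ restricts to a family of bounded operators on $H^1_0$ with norm bounds uniform on $[0,T]$.

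Next I would rewrite the problem as the mild equation
\begin{equation*}
\Psi(t) = U(t,0)\Psi^0 - i\int_0^t U(t,s)\, V_{Hxc}(\Psi(s))\Psi(s)\,\dd s,
\end{equation*}
and apply Banach's fixed point theorem in $C([0,T^*];H^1_0(\Omega;\C^N))$ for a sufficiently small $T^*>0$. The Lipschitz hypothesis on $V_{xc}$ transfers through the pointwise map $\Psi\mapsto V_{xc}(\Psi)\Psi$ as in Lemma~1 above, while the Hartree term $V_H(\Psi)\Psi$ is Lipschitz on bounded subsets of $H^1_0$ via the Hardy--Littlewood--Sobolev inequality together with the Sobolev embedding in dimensions $n=2,3$. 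Combined with the uniform $H^1_0$-boundedness of $U(t,s)$, this produces a contraction on a closed ball and hence a unique local mild solution.

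To extend to all of $[0,T]$ I would use that $V_{Hxc}(\Psi)$ is pointwise real-valued, so the $L^2$-norm is preserved and an energy identity yields a Grönwall-type bound on $\|\nabla\Psi(t)\|_{L^2}$, with nonlinear contributions controlled by the Lipschitz and $L^\infty$ bounds on $V_{xc}$ and $V_H$. These a priori estimates preclude $H^1_0$-blow-up and permit iteration of the local result, yielding $\Psi\in C([0,T];H^1_0)$. Substituting into the equation and using $-\Delta\colon H^1_0\to H^{-1}$ together with $V_{ext}\Psi,\,V_{Hxc}(\Psi)\Psi\in C([0,T];L^2)\hookrightarrow C([0,T];H^{-1})$ then gives $\partial_t\Psi\in C([0,T];H^{-1})$. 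The main obstacle is verifying Kato's hypotheses carefully so that $U(t,s)$ really acts continuously on $H^1_0$ with uniform bounds, and propagating Lipschitz continuity through the nonlocal Hartree term in $H^1_0$; both are technical but standard under the stated regularity of $V_{ext}$ and $V_{xc}$.
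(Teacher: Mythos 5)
This theorem is not proved in the paper at all: it is quoted as a known result from the cited reference \cite{Jerome2015}, stated here only to situate Theorem~\ref{thm:uniquesol} relative to the literature. So there is no in-paper proof to compare against; what can be said is that your strategy --- a Kato evolution family $U(t,s)$ for the time-dependent linear part, the Duhamel/mild formulation, a Banach fixed point in $C([0,T^*];H^1_0)$, and a Gr\"onwall continuation argument exploiting $L^2$-norm conservation --- is essentially the strategy of the cited source, and it is the standard and correct one for a semilinear Schr\"odinger problem with this target regularity. The observation that $L^2$-conservation makes the Hartree contribution effectively linear in the $H^1$-norm (so that blow-up is excluded) is the right mechanism for globalizing.

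One step needs to be stated more carefully. Because the Schr\"odinger evolution family on a bounded domain has no smoothing, the Duhamel integral $\int_0^t U(t,s)F(\Psi(s))\,\dd s$ lands in $C([0,T^*];H^1_0)$ only if the nonlinearity $F(\Psi)=V_{Hxc}(\Psi)\Psi$ is locally Lipschitz \emph{as a map into $H^1_0$}, not merely into $L^2$. Your appeal to ``Lemma~1 above'' does not cover this: that lemma gives Lipschitz continuity of $V_x(\Psi)\Psi$ from $L^2$ to $L^2$ and from $X$ to $X^*$, which would only close a contraction in $C([0,T^*];L^2)$ and would not produce an $H^1_0$-valued solution. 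To run the fixed point in $H^1_0$ you must differentiate the nonlinearity: for the exchange term, $\nabla\bigl(V_x(\rho)\Psi\bigr)=V_x'(\rho)\,\nabla\rho\,\Psi+V_x(\rho)\nabla\Psi$ with $\nabla\rho=2\Re(\overline{\Psi}\nabla\Psi)$, and one must check that the apparent singularity of $V_x'(\rho)\sim\rho^{1/n-1}$ at $\rho=0$ is compensated by the factor $|\Psi|^2$, and similarly that $\nabla V_H(\rho)$ is controlled via Hardy--Littlewood--Sobolev in $n=2,3$. This is exactly what the hypothesis ``a Lipschitz condition on $V_{xc}$'' in the theorem statement is designed to guarantee (in \cite{Jerome2015} it is formulated as local Lipschitz continuity of the potential operator on $C([0,T];H^1_0)$), so the gap is one of justification rather than of substance; but as written your argument conflates the $L^2$- and $H^1_0$-level Lipschitz properties, and the former is not sufficient.
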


Taking into account only the Hartree potential but not the exchange-correlation potential, existence of a unique solution in $C([0,\infty); H^2(\R^3;\C))\cap C^1([0,\infty); L^2(\R^3; \C))$ is shown in \cite{CancesLeBris1999}.

As the potential in \eqref{eq:KSweak} is purely real, the norm of the wave function is conserved, see e.g. \cite{SprengelCiaramellaBorzi2016}. We have 
\begin{lemma}\label{lem:normconservation} 
	The $L^2(\Omega;\C)$-norm of the solution to \eqref{eq:KSweak} is conserved in the sense that\linebreak 
$\|\Psi(\cdot,t)\|_{L^2(\Omega;\C^N)}=\|\Psi^0\|_{L^2(\Omega;\C^N)}$ for all $t \in [0,T]$.
\end{lemma}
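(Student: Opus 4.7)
The plan is the standard energy/norm identity for Schrödinger-type equations: test the weak equation against $\Phi=\Psi(t)$ itself and exploit that the Hamiltonian on the right-hand side is real and symmetric so that only the time-derivative term carries an imaginary part. Since Theorem~\ref{thm:uniquesol} gives $\Psi(t)\in H_0^1(\Omega;\C^N)$ for a.e.\ $t\in(0,T)$, this is an admissible test function in \eqref{eq:KSweak}.

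First I would substitute $\Phi=\Psi(t)$ in \eqref{eq:KSweak} to obtain
\[
i\,\langle \partial_t \Psi(t),\Psi(t)\rangle_{X^*,X}
= \|\nabla\Psi(t)\|_{L^2}^2 + (V_{ext}(\cdot,t,u)\Psi,\Psi)_{L^2} + (V_{Hxc}(\Psi)\Psi,\Psi)_{L^2},
\]
for a.e.\ $t\in(0,T)$, where the time-derivative pairing is understood in the sense of $X^*$--$X$ duality. The key observation is that each term on the right-hand side is real: $\|\nabla\Psi\|_{L^2}^2$ is manifestly nonnegative, while $V_{ext}=uV_u+V_0$ and $V_{Hxc}=V_H+V_x+V_c$ are real-valued multiplication operators, so $(V\Psi,\Psi)_{L^2}=\int_\Omega V|\Psi|^2\,\mathrm{d}x\in\R$. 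Hence, taking real parts and dividing by $i$,
\[
\mathrm{Re}\,\langle \partial_t\Psi(t),\Psi(t)\rangle_{X^*,X}=0 \quad \text{a.e.\ in }(0,T).
\]

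Next I would invoke the standard chain rule for functions in the Gelfand triple $X\hookrightarrow Y\hookrightarrow X^*$: for $\Psi\in W$, the map $t\mapsto \|\Psi(t)\|_{L^2(\Omega;\C^N)}^2$ is absolutely continuous on $[0,T]$ with
\[
\frac{\mathrm{d}}{\mathrm{d}t}\|\Psi(t)\|_{L^2}^2 = 2\,\mathrm{Re}\,\langle \partial_t\Psi(t),\Psi(t)\rangle_{X^*,X} \quad \text{a.e.}
\]
(see, e.g., Evans~\cite{Evans2010}, the same reference used above for the continuous embedding $W\hookrightarrow C([0,T];L^2(\Omega;\C^N))$ that makes the pointwise value $\Psi(t)$ well-defined). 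Combining the two displays yields $\tfrac{\mathrm{d}}{\mathrm{d}t}\|\Psi(t)\|_{L^2}^2=0$ a.e., and by absolute continuity in time and the initial value $\Psi(0)=\Psi^0$, this gives $\|\Psi(t)\|_{L^2(\Omega;\C^N)}=\|\Psi^0\|_{L^2(\Omega;\C^N)}$ for every $t\in[0,T]$.

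The only nontrivial point is bookkeeping the duality pairing correctly: $\partial_t\Psi$ lives in $X^*$, not in $L^2$, so $(\partial_t\Psi,\Psi)_{L^2}$ as written in the weak formulation should be read as the $X^*$--$X$ pairing, and one must cite the Gelfand-triple chain rule rather than differentiate under the integral sign naively. This is routine but should be flagged explicitly. Everything else is a one-line consequence of the reality of the potentials; no use is needed of the specific Lipschitz/boundedness properties of $V_x$ or $V_c$ beyond the fact that they are real-valued, so the proof is robust to the particular modeling choice in \eqref{Vxcuttoff} and the assumptions on $V_c$.
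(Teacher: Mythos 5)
Your proof is correct and matches the approach the paper itself indicates: the paper does not prove this lemma in-text but defers to \cite{SprengelCiaramellaBorzi2016} with exactly the rationale you develop, namely that the potential in \eqref{eq:KSweak} is real-valued, so testing with $\Phi=\Psi(t)$ and applying the Gelfand-triple chain rule gives $\frac{\mathrm{d}}{\mathrm{d}t}\|\Psi(t)\|_{L^2}^2=2\,\mathrm{Re}\,\langle\partial_t\Psi(t),\Psi(t)\rangle_{X^*,X}=0$. Your care with the $X^*$--$X$ duality pairing and the admissibility of $\Psi(t)$ as a test function is exactly the right bookkeeping, and no further ingredients are needed.
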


\section{Formulation of TDKS optimal control problems}\label{sec:oct}

Optimal control of quantum systems is of fundamental importance in quantum mechanics applications. The objectives of the control may be of different nature ranging from the breaking of a chemical bond in a molecule by an optimally shaped laser pulse to the manipulation of electrons in two-dimensional quantum dots by a gate voltage potential. In this framework, the objective of the 
control is modeled by a cost functional to be optimized under 
the differential constraints represented by the quantum model 
(in our case the TDKS equation) including the control mechanism. 

We consider an objective $J$ that includes different target functionals and a control cost as follows 
\begin{equation}\label{oct:functional}\begin{split}
	J(\Psi, u)&=\underbrace{\frac{\beta}{2}\int_0^T \int_\Omega (\rho(x,t)-\rho_d(x,t))^2\dd x \dd t}_{J_\beta} 
	 +\underbrace{\frac{\eta}{2}\int_{\Omega} \chi_A(x) \rho(x,T)\dd x}_{J_\eta} + \underbrace{\frac{\nu}{2}\|u\|_{H^1(0,T;\R)}^2}_{J_\nu}.
\end{split}\end{equation}
The first term, $J_\beta$ models the requirement that the electron density evolves following as close as possible a given target trajectory, $\rho_d$. We remark that $J_\beta$ is only well-defined if $\Psi$ is at least in $L^4(0,T;L^4(\Omega;\C^N))$. This is guaranteed by the improved regularity from Theorem \ref{thm:uniquesol} for $\Psi^0\in H^2(\Omega;\C^N)$. 
The term $J_\eta$ aims at locating the density outside of a certain region $A$. The term $J_\nu$ penalizes the cost of the control. We remark that the regularization term $J_\nu$ can be any weighed $H^1(0,T;\R)$ norm, e.g. $\|u'\|_{L^2(0,T;\R)}^2+a\|u\|_{L^2(0,T;\R)}^2$ for $a> 0$.
We assume that the target weights are all non-negative $\beta,\eta\geq 0$, with $β+\eta> 0$, 
 and the regularization weight $\nu>0$. The characteristic function of $A$ is given by $\chi_A(x)=\begin{cases}1 & x\in A \\ 0 & \text{otherwise}\end{cases}$.

Our purpose is to find an optimal control function $u$, 
which modulates a dipole or a quadratic potential, such that $J(\Psi,u)$ is minimized subject to the constraint that $\Psi$ satisfies the TDKS equations. 
This problem is formulated as follows  
\begin{equation}\label{TDKSopc}
	\min_{(\Psi,u)\in (W,H^1(0,T;\R))}  J(\Psi,u) \qquad \text{ subject to } \eqref{eq:KSweak}.
\end{equation}
Here and in the following, we assume that $\Psi^0 \in H^1(\Omega; \C^N)$.

The solutions to this PDE-constrained optimization problem are characterized as solutions to the corresponding first-order optimality conditions \cite{BorziSchulz2012,TroeltzschEnglish}. 
These conditions for \eqref{TDKSopc} can be formally obtained by setting to zero the gradient of the following Lagrange function
\begin{align}
	&L( \Psi, u, \Lambda)=J( \Psi, u)+ L_1( \Psi, u, \Lambda), 
	\qquad \mbox{ where }   \label{LagrangeL}\\
	&L_1=\Re\left( \sum_{j=1}^N\int_0^T \int_\Omega\left( i\pabl{\psi_j(x,t)}{t}-\left(-\nabla^2+V_{ext}(x,t,u)+V_{Hxc}(x,t,\rho) \right)\psi_j(x,t) \right)\co{\lambda_j(x,t)} \dd x\dd t\right) . \nonumber 
\end{align}
The function $\Lambda=(\lambda_1, \ldots, \lambda_N)$, where $\lambda_j\in L^2(0,T;H^1_0(\Omega;\C))$, $j=1,\ldots,N$, represent the adjoint variables. In the Lagrange formalism, a solution to (\ref{TDKSopc}) corresponds to a stationary point of $L$, where the derivatives of $L$ with respect to $\psi_j$, $\lambda_j$ and $u$ must be zero along any directions $\delta\psi$, $\delta\lambda$, and $\delta u$. A detailed calculation of these derivatives can be found in Appendix \ref{sec:DerivationOfOptSystem}. The main difficulty in the derivation is the complex and non-analytic dependence of the Kohn-Sham potential on the wave function, which results in the terms $\nabla_\psi L_H$, $\nabla_\psi L_{xc}$  in \eqref{optsystem:adjoint} below. 

The first-order optimality conditions define the following optimality system  
\begin{subequations} 
	\begin{align}
		i\pabl{\psi_m(x,t)}{t}&=\left(-\nabla^2+V_{ext}(x,t,u)+V_{Hxc}(x,t,\rho) \right)\psi_m(x,t),   \label{optsystem:forward}\\
		\psi_m(x,0)&=\psi_m^0(x),\label{optsystem:forward:initial}\\
		i \pabl{\lambda_m}{t}&=\left(-\nabla^2+V_{ext}(x,t,u) \right)\lambda_m(x,t)
		+\nabla_\psi L_H+\nabla_\psi L_{xc}
		-2β(\rho-\rho_d)\psi_m, \label{optsystem:adjoint}\\
		\lambda_m(x,T)&=-i 
		η\chi_A(x)  \psi_m(x,T) , \label{optsystem:adjoint:initial} \\
		 & \nu u(t)+\mu(t)=0,		\label{optsystem:optcond}
	\end{align}
\end{subequations} 
where $m=1,\ldots, N$, and 
\begin{align*}
\nabla_\psi L_H:&=\sum_{j=1}^N V_H\left(2\Re(\psi_j(y,t)\co{\lambda_j(y,t)})\right)(x,t) \psi_m(x,t)  +  V_H(\rho)(x,t) \lambda_m(x,t),\\
		\nabla_\psi L_{xc}:&=2\sum_{j=1}^N \pabl{V_{xc}}{\rho}(\rho(x,t)) \psi_m(x,t)  \Re(\psi_j(x,t)\co{\lambda_j(x,t)})    + V_{xc}(\rho(x,t)) \lambda_m(x,t) . 
\end{align*}
Further, $\mu$ is the $H^1$-Riesz representative of the continuous linear functional\linebreak $\scalarT{-\Re\scalarCN{\Lambda}{V_u\Psi}}{\cdot}$; 
see Theorem \ref{thm:gradient} below. 
Assuming that $u\in H^1_0(0,T;\R)$, $\mu$ can be computed by solving the equation
\begin{align*}
	\left(-\abl{^2}{t^2}+1\right) \mu=-\Re\scalarCN{\Lambda}{V_u\Psi}, \quad \mu(0)=0,\, \mu(T)=0,
\end{align*}
which is understood in a weak sense. For more details, see, e.g., \cite{VonWinckelBorzi2008}.

Theorem \ref{thm:uniquesol} guarantees that \eqref{optsystem:forward}--\eqref{optsystem:forward:initial} is uniquely solvable and hence the reduced cost functional 
\begin{align}
	\hat{J}:H^1(0,T;\R) \rightarrow \R, \quad \hat{J}(u):=J(\Psi(u),u) ,
\end{align}
is well defined. 

To ensure the correct regularity properties of the adjoint variables, we do not use the Lagrange formalism explicitly in this work. Instead, we directly use the existence theorem from \cite{SprengelCiaramellaBorzi2016} for the adjoint equation \eqref{optsystem:adjoint}--\eqref{optsystem:adjoint:initial}, an extension of Theorem \ref{thm:uniquesol}, and derive the gradient of the reduced cost functional $\hat{J}$ from these solutions in Theorem \ref{thm:gradient} below. Later, we use this gradient to construct a numerical optimization scheme to minimize $\hat{J}$.

\section{Theoretical analysis of TDKS optimal control problems}\label{sec:theory}
In this section, we present a mathematical analysis of the optimal control problem \eqref{TDKSopc}. To this end, we first show that both the constraint given by the TDKS equations and the cost functional $J$ are continuously real-Fréchet differentiable. Subsequently, we prove existence of solutions to the optimization problem.

The Kohn-Sham potential depends on the density $\rho$. The density is a real-valued function of the complex wavefunction and can therefore not be holomorphic.
As complex differentiability is a stronger property than what we need in the following, we introduce the following weaker notion of real-differentiability. 

\begin{definition}\label{TDKS:defRealDiff}
Let $X,Y$ be complex Banach spaces. A map $f:X\rightarrow Y$ is called real-linear if and only if
	\begin{enumerate}
		\item $f(x)+f(y)=f(x+y)$ $\forall x,y\in X$ and
		\item $f(\alpha x)=\alpha f(x)$ $\forall \alpha \in \R$ and $\forall x\in X$.
	\end{enumerate}
	The space of real-linear maps from $X$ to $Y$ is a Banach space.
	
	We call a map $f:X\rightarrow Y$ real-G\^ateaux (real-Fr\'echet) differentiable if the standard definition of G\^ateaux (Fr\'echet) differentiability holds for a real-linear derivative operator.
\end{definition}

\begin{remark}\label{CH3:TDKS:RealDiffRemark}
	In complex spaces, the notion of real-G\^ateaux (real-Fr\'echet) differentiability is weaker than G\^ateaux (Fr\'echet) differentiability.
	However, all theorems for differentiable functions in $\R^2$ also hold in $\C$ for functions that are just real-differentiable.
	This is the case for all theorems that we will make use of, e.g. the chain rule and the implicit function theorem.
	Therefore, it is enough to show real-Fr\'echet differentiability of the constraint.
	
	An alternative and fully equivalent approach is to consider a real vector
	\begin{align*}
		\hat{\Psi}=\begin{pmatrix}
			\Re \psi_1, \Im \psi_1 , \cdots , \Re \psi_N , \Im \psi_N
		\end{pmatrix}^T
	\end{align*}
	and the corresponding matrix Schrödinger equation.
\end{remark}

\begin{theorem}\label{thm:cFrechet}
The map $c: W\times H^1(0,T;\R) \rightarrow X^*$, defined as
\begin{align}
\label{cectilda}
	c(\Psi, u):=\tilde{c}(\Psi, u)-V_{Hxc}(\Psi)\Psi, \text{ where }
\tilde{c}(\Psi, u):=i\pabl{\Psi}{t}-\left(-\nabla^2+V_0+V_u u(t)\right)\Psi,
\end{align}
is continuously real-Fréchet differentiable.
\end{theorem}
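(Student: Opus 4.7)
The plan is to decompose $c = c_\ell + c_b - V_H(\Psi)\Psi - V_x(\Psi)\Psi - V_c(\Psi)\Psi$, where $c_\ell(\Psi) := i\partial_t\Psi - (-\nabla^2 + V_0)\Psi$ is bounded linear from $W$ to $X^*$, and $c_b(\Psi,u) := -V_u\,u(t)\,\Psi$ is bounded bilinear from $W \times H^1(0,T;\R)$ to $X^*$. Boundedness of $c_\ell$ follows termwise: $\partial_t\colon W \to X^*$ by the very definition of $W$, $-\nabla^2\colon X \to X^*$ from the standard $H^1_0$--$H^{-1}$ duality, and multiplication by $V_0 \in L^\infty(\Omega)$ maps $Y$ boundedly into $Y \hookrightarrow X^*$. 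Boundedness of $c_b$ uses the Sobolev embedding $H^1(0,T;\R) \hookrightarrow C([0,T];\R)$ together with $V_u \in L^\infty(\Omega)$ to give $\|V_u u\Psi\|_Y \leq \|V_u\|_\infty \|u\|_{C([0,T])}\|\Psi\|_Y$. Since bounded linear and bounded bilinear maps between Banach spaces are automatically continuously real-Fréchet differentiable (with constant, resp.\ affine, derivative), the piece $c_\ell + c_b$ is settled.

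For the local nonlinear terms $V_\ast(\Psi)\Psi$ with $\ast \in \{x,c\}$, the pointwise function $g_\ast \colon \C^N \to \C^N$, $z \mapsto V_\ast(|z|^2)\,z$ is real-$C^1$ with a uniformly bounded real derivative: for $V_x$ this follows by direct computation from the cut-off \eqref{Vxcuttoff}, while for $V_c$ it is precisely Assumption 2. The candidate real-Fréchet derivative at $\Psi$ in direction $\Phi$ is
\begin{equation*}
D_\Psi\bigl(V_\ast(\Psi)\Psi\bigr)\Phi \;=\; V_\ast(\rho)\,\Phi \;+\; 2\,V_\ast'(\rho)\,\Re(\Psi\cdot\overline{\Phi})\,\Psi,
\end{equation*}
which is real-linear in $\Phi$. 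A standard Nemytskii argument---pointwise mean value plus Lebesgue dominated convergence with $Dg_\ast$ serving as a uniform majorant---shows that the induced substitution operator is continuously real-Fréchet differentiable from $Y$ into $Y$; the continuous embeddings $W \hookrightarrow Y \hookrightarrow X^*$ then transfer the regularity to the spaces required by the theorem.

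The main obstacle is the Hartree term $V_H(\Psi)\Psi$, because the Coulomb convolution $V_H(\rho)(x)=\int_\Omega \rho(y)/|x-y|\,\dd y$ is nonlocal and no pointwise Nemytskii framework applies. Exploiting bilinearity of $V_H$ in $\rho$, the candidate derivative is $\Phi \mapsto V_H(\rho)\,\Phi + V_H\bigl(2\Re(\Psi\cdot\overline{\Phi})\bigr)\,\Psi$. I would establish its boundedness by combining the Hardy--Littlewood--Sobolev inequality on the bounded domain $\Omega \subset \R^n$, $n\in\{2,3\}$, with the Sobolev embedding $H^1_0(\Omega) \hookrightarrow L^q(\Omega)$ for $q$ sufficiently large so that $V_H(\rho) \in L^\infty(\Omega)$ with $\|V_H(\rho)\|_\infty \lesssim \|\Psi\|_{H^1_0}^2$; the same estimate applied to $V_H(2\Re(\Psi\cdot\overline{\Phi}))$ gives a bound proportional to $\|\Psi\|_{H^1_0}\|\Phi\|_{H^1_0}$. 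Integration in $t$ and the embedding $Y \hookrightarrow X^*$ yield the operator-norm bound of the derivative. The Fréchet remainder is a trilinear-in-$\Phi$ expression, and continuity of the derivative in $\Psi$ reduces to the same type of estimate applied to the increment $V_H(\rho_1-\rho_2)$; both conclusions follow because $V_H$ is itself linear in $\rho$. Summing the three Hxc contributions with $c_\ell + c_b$ completes the proof.
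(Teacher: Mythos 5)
Your decomposition and candidate derivatives match the paper's, and the linear part, the bilinear control term, and the Hartree term are handled in essentially the same spirit (the paper bounds the Hartree remainder by quoting estimates from \cite{SprengelCiaramellaBorzi2016} rather than Hardy--Littlewood--Sobolev, but either route is viable). The genuine gap is in the local terms $V_\ast(\Psi)\Psi$, $\ast\in\{x,c\}$. The claim that a ``standard Nemytskii argument --- pointwise mean value plus dominated convergence'' yields \emph{continuous real-Fréchet} differentiability of the substitution operator \emph{from $Y$ into $Y$} is false. Dominated convergence with the bounded majorant $Dg_\ast$ only produces the real-Gâteaux derivative, because there the direction is fixed and only a scalar parameter tends to zero. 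Fréchet differentiability requires the remainder to be $o(\|\delta\Psi\|_Y)$ uniformly over all small increments, and this fails for every non-affine superposition operator acting from $L^2$ to $L^2$ over a non-atomic finite measure space: take $\delta\Psi_k = M e_1\chi_{E_k}$ with $|E_k|\downarrow 0$, so that $\|\delta\Psi_k\|_Y = M|E_k|^{1/2}\to 0$, while the remainder quotient converges to $\bigl|g_\ast(\psi(x_0)+Me_1)-g_\ast(\psi(x_0))-Dg_\ast(\psi(x_0))Me_1\bigr|/M$, a nonzero constant for suitable $M$ unless $g_\ast$ is affine along that ray. This is the classical rigidity theorem of Krasnoselskii for Nemytskii operators: Fréchet differentiability of $N_f\colon L^p\to L^p$ at a single point forces $f$ to be affine. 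Since $V_x$ is not constant, your intermediate claim is not merely unproven but wrong.

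The theorem survives only because the perturbations are measured in the $W$-norm, which excludes the concentrating increments above. The paper's repair, which you would need to adopt, is: first establish real-Gâteaux differentiability (your dominated-convergence step, which is legitimate for that purpose); then show that the Gâteaux-derivative map $\Psi\mapsto A(\Psi)$ is continuous from $W$ into $\Lop(W,Y)$, exploiting Hölder/Lipschitz estimates on $Dg_\ast$ and the fact that the operator norm divides by $\|\delta\Psi\|_W$ rather than $\|\delta\Psi\|_Y$; and finally invoke the standard criterion that continuity of the Gâteaux derivative implies continuous Fréchet differentiability. The remainder cannot be made $o(\|\delta\Psi\|_Y)$, but it can be made $o(\|\delta\Psi\|_W)$, and that is exactly what the statement of the theorem asks for.
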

Notice that $c(\Psi, u)=0$ represents the TDKS equation. 

\begin{remark}
We remark that $V_x(\Psi)\Psi$, $V_H(\Psi)\Psi$, and $V_u u\Psi$ are in $Y$. Hence, the operator norm of their derivatives in $\Lop(W,X^*)$ can be bounded in $\Lop(W,Y)$ as follows.
\begin{align*}
	\|B(\Psi)\|_{\Lop(W,X^*)}&=\sup_{ \delta\Psi \in W\setminus\{0\}}\frac{\|B(\Psi) \delta \Psi\|_{X^*}}{\| \delta \Psi\|_W}
	\leq c \sup_{ \delta\Psi \in W\setminus\{0\}} \frac{\|B(\Psi) \delta\Psi\|_Y}{\| \delta\Psi\|_W}\\
	& 
	\leq c\|B(\Psi)\|_{\Lop(W,Y)} \sup_{ \delta\Psi \in W\setminus\{0\}} \frac{\| \delta\Psi\|_Y}{\| \delta\Psi\|_W} 
	\leq  c \, \|B(\Psi)\|_{\Lop(W,Y)} , 
\end{align*}
where $B$ denotes the derivative of $V_x(\Psi)\Psi$, $V_H(\Psi)\Psi$, or $V_u u\Psi$, respectively. 
\end{remark}

Further, to prove Theorem \ref{thm:cFrechet}, we need the following lemmas. We begin with studying the nonlinear exchange potential term.
\begin{lemma}\label{lem:VxPsiGateaux}
	The map $W \ni \Psi \mapsto V_x(\Psi)\Psi \in Y$ is real-Gâteaux differentiable for all $\Psi\in W$ and its real-Gâteaux  derivative, denoted with $A(\Psi)$, is specified as follows 
	\begin{align*}
		&A(\Psi)\in \Lop(W,Y), && A(\Psi)( \delta\Psi)=A_1(\Psi)(\delta\Psi)+A_2(\Psi)(\delta\Psi),\\ &A_1(\Psi)(\delta\Psi):=V_x(\Psi)\delta\Psi,&& A_2(\Psi)(\delta\Psi):=\pabl{V_x}{\rho}2\Re\scalarC{\Psi}{\delta\Psi}\Psi.
	\end{align*}
\end{lemma}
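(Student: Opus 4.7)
The plan is to verify the stated formula directly from the definition of real-Gâteaux differentiability. Fix $\Psi,\delta\Psi\in W$. For $t\in\R\setminus\{0\}$ I consider the difference quotient
\begin{equation*}
q_t \;:=\; \frac{V_x(\Psi+t\delta\Psi)(\Psi+t\delta\Psi)-V_x(\Psi)\Psi}{t},
\end{equation*}
and will show that $q_t\to A(\Psi)(\delta\Psi)$ in $Y$ as $t\to 0$, followed by verification that the candidate derivative $A(\Psi)$ is real-linear and belongs to $\Lop(W,Y)$.

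For the pointwise limit I would set $\rho_s := |\Psi+s\delta\Psi|^2 = \rho_0 + 2s\,\Re\scalarC{\Psi}{\delta\Psi} + s^2|\delta\Psi|^2$ and decompose
\begin{equation*}
q_t \;=\; V_x(\rho_0)\delta\Psi \;+\; \frac{V_x(\rho_t)-V_x(\rho_0)}{t}\,(\Psi+t\delta\Psi).
\end{equation*}
Because \eqref{Vxcuttoff} makes $V_x:[0,\infty)\to\R$ globally $C^2$ (the polynomial $p$ supplies $C^2$ matching at $\rho=R$ and $\rho=2R$), the scalar chain rule applied a.e.\ in $\Omega\times(0,T)$ yields $(V_x(\rho_t)-V_x(\rho_0))/t \to \partial_\rho V_x(\rho_0)\cdot 2\Re\scalarC{\Psi}{\delta\Psi}$. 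Combined with $\Psi+t\delta\Psi\to\Psi$, this gives $q_t \to A_1(\Psi)(\delta\Psi)+A_2(\Psi)(\delta\Psi)=A(\Psi)(\delta\Psi)$ pointwise a.e.

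To upgrade to convergence in $Y=L^2(0,T;L^2(\Omega;\C^N))$ I would appeal to dominated convergence, using the preceding Lipschitz lemma: the map $f:\C^N\to\C^N$, $z\mapsto V_x(|z|^2)z$ is Lipschitz with some constant $L$, so applied pointwise this yields $|q_t|\le L|\delta\Psi|$ uniformly in $t$, and $|\delta\Psi|\in L^2(\Omega\times(0,T))$ by $W\hookrightarrow Y$. Real-linearity of $A(\Psi)$ is then immediate: $A_1(\Psi)$ is complex-linear in $\delta\Psi$, and $A_2(\Psi)$ depends on $\delta\Psi$ only through the real-linear functional $\Re\scalarC{\Psi}{\cdot}$. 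Finally, $A(\Psi)\in\Lop(W,Y)$ with $\|A(\Psi)\|\leq L$ follows by passing to the limit in the $Y$-bound $\|q_t\|_Y\leq L\|\delta\Psi\|_Y\leq L\|\delta\Psi\|_W$.

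The main technical point — and essentially the only nontrivial ingredient — is the genuine $C^2$ regularity of $V_x$ at the joining points $\rho\in\{R,2R\}$ on which the pointwise chain rule depends; this is built into the construction of $p$ in \eqref{Vxcuttoff} and reduces to verifying the matching conditions $V_x(R)=p(R)$, $V_x'(R)=p'(R)$, $V_x''(R)=p''(R)$ and $p(2R)=\mathrm{const}$, $p'(2R)=p''(2R)=0$. Once this is in hand, the remaining estimates are routine and the preceding lemma carries the analytic weight of the argument.
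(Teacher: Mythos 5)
Your proof is correct in outline and reaches the same derivative by the same basic route (difference quotient, identify the limit through the derivative of $\rho$, then check boundedness), but the way you close the two analytic steps differs from the paper and is arguably tighter. The paper identifies the limit by a formal Taylor expansion with an unexplained $\mathcal{O}(t)$ remainder, and then proves $A(\Psi)\in\Lop(W,Y)$ by an explicit computation: it splits $\Omega$ into the three regions $\{\rho\le R\}$, $\{R<\rho<2R\}$, $\{\rho\ge 2R\}$ and bounds $A_2$ on each piece using the monotonicity of $\pabl{p}{\rho}$, arriving at the constant $4\alpha_n^2(R^{2/n}+R^{2/n-2}/n^2)$. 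You instead recycle the preceding Lipschitz lemma twice: the pointwise bound $|q_t|\le L|\delta\Psi|$ gives a dominating function for the convergence $q_t\to A(\Psi)(\delta\Psi)$ in $Y$, and passing to the limit gives $\|A(\Psi)\|_{\Lop(W,Y)}\le L$ with no case analysis. That is a genuine economy, and it also makes the limit argument rigorous where the paper's is only sketched.

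One inaccuracy to repair: $V_x$ is \emph{not} globally $C^2$ (nor even $C^1$) as a function of $\rho$ on $[0,\infty)$, since $\rho\mapsto\alpha_n\rho^{1/n}$ has unbounded derivative as $\rho\to 0^+$; the polynomial $p$ only fixes the matching at $\rho=R$ and $\rho=2R$. Consequently your pointwise chain-rule step $(V_x(\rho_t)-V_x(\rho_0))/t\to\pabl{V_x}{\rho}(\rho_0)\,2\Re\scalarC{\Psi}{\delta\Psi}$ fails exactly at points where $\rho_0=0$. The conclusion survives there: at such points $q_t=V_x(t^2|\delta\Psi|^2)\,\delta\Psi\to V_x(0)\delta\Psi=0$, which agrees with $A_1(0)(\delta\Psi)+A_2(0)(\delta\Psi)=0$ (note $A_2$, written with the factor $|\Psi|^{2/n-2}\Re\scalarC{\Psi}{\delta\Psi}\Psi$, is $O(|\Psi|^{2/n})|\delta\Psi|$ and must be read as $0$ when $\Psi=0$). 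What is actually true, and what the Lipschitz lemma already uses, is that the \emph{composite} $z\mapsto V_x(|z|^2)z$ is $C^1$ on $\C^N$ with derivative vanishing at the origin; either add the separate (one-line) treatment of the set $\{\Psi=0\}$ or phrase the smoothness claim in terms of that composite rather than of $V_x(\rho)$ itself.
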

\begin{proof}
First, we need the derivative of the density $\rho$ which is a non-holomorphic function of a complex variable. Differentiation of $\rho$ can be done using the Wirtinger calculus \cite{Remmert1991} where $\Psi$ and $\co{\Psi}$ are treated as independent variables. 
By using these calculus rules, the real-Fréchet derivatives of $\rho=\sum_{m=1}^M  \psi_m \co{ \psi_m}=\scalarC{\Psi}{\Psi}$ are given by
\begin{align*}
	\pabl{\rho}{\Psi}(\delta\Psi) &= 2\Re\scalarC{\Psi}{\delta\Psi},\\
	\pabl{^2\rho}{\Psi^2}(\delta\Psi, \delta \Phi) &= 2\Re\scalarC{\delta\Psi}{\delta\Phi}.
\end{align*}

The directional derivative of $V_x(\Psi)\Psi$ along $\delta \Psi$ is given by $A(\Psi):W\rightarrow X^*$,
\begin{align*}
A(\Psi)( \delta\Psi)&=\lim_{t\rightarrow 0} \frac{1}{t} \left( V_x(\Psi+t \delta \Psi)(\Psi+t \delta \Psi)-V_x(\Psi)\Psi\right)\\
&=\lim_{t\rightarrow 0}\left( V_x(\Psi) \delta \Psi+ \pabl{V_x}{\rho}\pabl{\rho}{\Psi} (\delta\Psi) \Psi+\mathcal{O}(t)\right)\\
&=A_1(\Psi) \delta \psi+A_2(\Psi)( \delta \Psi).
\end{align*}
Using the definition of $V_x$, we have
\begin{align*}
	A(\Psi)( \delta\Psi)=V_x(\Psi) \delta\Psi+\alpha_n\begin{cases}
	\frac{|\Psi|^{2/n-2}}{n}2\Re \scalarC{\Psi}{\delta\Psi}\Psi & |\Psi|^2 \leq R,\\
	\pabl{p}{\rho}(|\Psi|^2)2\Re \scalarC{\Psi}{\delta\Psi}\Psi & R<|\Psi|^2 <2R,\\ 
	0 & |\Psi|^2 \geq 2R
	\end{cases}
\end{align*}
and $\delta\Psi \mapsto A(\Psi)(\delta\Psi)$ is obviously linear in $\delta\Psi$ over the real scalars. 
We are left to show that $A(\Psi)$ is a bounded operator.
\begin{align*}
	\|A(\Psi)\|_{\Lop(W, Y)}& 
	\leq \sup_{ \delta\Psi \in W\setminus\{0\}} \frac{\|A_1(\Psi) \delta \Psi\|_Y}{\| \delta\Psi\|_W}+\sup_{ \delta\Psi \in W\setminus\{0\}} \frac{\|A_2(\Psi)(\delta\Psi)\|_Y}{\| \delta\Psi\|_W}.
\end{align*}
For the first term, we use $V_x(\Psi)\in L^\infty(0,T;L^\infty(\Omega;\R))$ as well as $\|\Psi\|_Y\leq \|\Psi\|_W$ to obtain
\begin{align*}
	\sup_{ \delta\Psi \in W\setminus\{0\}} \frac{\|A_1(\Psi) \delta \Psi\|_Y}{\| \delta\Psi\|_W} \leq \sup_{ \delta\Psi \in W\setminus\{0\}} \frac{\|V_x(\Psi)\|_{L^\infty(0,T;L^\infty(\Omega;\R))}\|\delta\Psi\|_Y}{\|\delta\Psi\|_W}  \leq \|V_x(\Psi)\|_{L^\infty(0,T;L^\infty(\Omega;\R))}.
\end{align*}

For the second term, we decompose the domain into $\Omega=\Omega_1\cup \Omega_2\cup \Omega_3$ depending on the size of $\rho$: $\Omega_1:= \{x\in \Omega |\, |\Psi|^2 \leq R\}$, $\Omega_2:=\{x\in \Omega|\, R < |\Psi|^2 <2R\}$, and $\Omega_3:=\{x\in \Omega|\, |\Psi|^2 \geq 2R\}$.
Using the fact that $|\Psi|^2=\rho$ is bounded by $R$  in $\Omega_1$ and by $2R$ in $\Omega_2$ as well as that $A_2(\Psi)=0$ in $\Omega_3$, and that $\pabl{p}{\rho}$ is monotonically decreasing between $R$ and $2R$ we obtain 
\begin{align*}
	\|A_2(\Psi)(\delta\Psi)\|_Y^2
	&=\int_0^T \int_{\Omega_1} |A_2(\Psi)(\delta\Psi)|^2\dd x\dd t+c\int_0^T \int_{\Omega_2} |A_2(\Psi)(\delta\Psi)|^2\dd x\dd t\\
	&=\int_0^T \int_{\Omega_1} \left|\alpha_n |\Psi|^{2/n-2}2\Re\scalarCN{\Psi}{\delta\Psi}\Psi\right|^2\dd x\dd t\\
	&\quad+\int_0^T \int_{\Omega_2} \left|\alpha_n\pabl{p}{\rho}(\rho)2\Re \scalarC{\Psi}{\delta\Psi}\Psi\right|^2\dd x\dd t\\
	&\leq \int_0^T \int_{\Omega_1} 4\alpha_n^2 |\Psi|^{4/n}|\delta\Psi|^2\dd x\dd t
	+4\alpha_n^2 \left(\pabl{p}{\rho}(R)\right)^2 \int_0^T \int_{\Omega_2} |\Psi|^4 |\delta\Psi|^2 \dd x\dd t\\
	&\leq 4\alpha_n^2 \left(R^{2/n}+ \frac{R^{2/n-2}}{n^2}\right)\|\delta\Psi\|_Y^2.
\end{align*}

We have shown that the directional derivative $\delta\Psi\mapsto A(\Psi)(\delta\Psi)$ is a bounded linear map for all $\Psi\in W$, hence $\Psi \mapsto V_x(\Psi)\Psi$ is real-Gâteaux differentiable.
\end{proof}

Before improving this result to real-Fréchet differentiability, we prove a general result on the $L^2$-norm of products of functions. To this end, denote $Y_1=L^2(0,T;L^2(\Omega;\C))$.
\begin{lemma}\label{lem:l2normproduct}
	Let $k\in \N$ be arbitrary. Given two functions $f\in L^\infty(\Omega;\C)$ and $g\in L^2(\Omega;\C^k)$. Then $\|fg\|_{L^2(\Omega;\C^k)}\leq \frac{\sqrt{\mu(\Omega)}}{\sqrt{2\pi}^n} \|f\|_{L^2(\Omega;\C)}\|g\|_{L^2(\Omega;\C^k)}$. Similarly, for $f\in L^\infty(0,T; L^\infty(\Omega;\C))$ and $g\in Y$, we have$\|fg\|_Y\leq \frac{\sqrt{\mu(\Omega)T}}{\sqrt{2\pi}^{n+1}} \|f\|_{Y_1}\|g\|_Y$.
\end{lemma}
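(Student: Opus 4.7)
My plan is to pass to Fourier space, where multiplication becomes convolution, and apply Young's inequality combined with Plancherel. I would extend $f$ and $g$ by zero to $\R^n$ and use the symmetric Fourier convention $\hat h(\xi)=(2\pi)^{-n/2}\int h(x)e^{-ix\cdot\xi}\,\mathrm dx$, which is what produces the characteristic factors of $\sqrt{2\pi}^{\,n}$ in the claimed bound.

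First I would use Plancherel to rewrite $\|fg\|_{L^2}=\|\widehat{fg}\|_{L^2}$, then apply the convolution theorem to obtain $\widehat{fg}=(2\pi)^{-n/2}\hat f*\hat g$. Young's convolution inequality gives $\|\hat f*\hat g\|_{L^2}\le \|\hat f\|_{L^1}\|\hat g\|_{L^2}$, and Plancherel converts $\|\hat g\|_{L^2}$ back into $\|g\|_{L^2}$. Combining these gives
\begin{equation*}
\|fg\|_{L^2(\Omega;\C^k)}\le \frac{1}{\sqrt{2\pi}^{\,n}}\,\|\hat f\|_{L^1}\,\|g\|_{L^2(\Omega;\C^k)}.
\end{equation*}
The closing step is a Cauchy–Schwarz bound of the form $\|\hat f\|_{L^1}\le \sqrt{\mu(\Omega)}\,\|\hat f\|_{L^2}=\sqrt{\mu(\Omega)}\,\|f\|_{L^2}$, using the localization of the relevant Fourier modes to a set comparable to $\mu(\Omega)$, and plugging into the previous display yields exactly the stated inequality. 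The space-time version follows by running the identical argument on $\Omega\times(0,T)\subset\R^{n+1}$ in place of $\Omega$: the measure becomes $\mu(\Omega)T$ and the power of $\sqrt{2\pi}$ in the denominator increases by one, which matches the claim with $Y_1$ and $Y$.

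The main obstacle is making the last Cauchy–Schwarz step rigorous. Since $f$ is supported in the bounded set $\Omega$, its Fourier transform $\hat f$ extends to an entire function on $\C^n$ and therefore cannot literally be supported in a set of finite measure, so the naive bound $\|\hat f\|_{L^1(\R^n)}\le \sqrt{\mu(\Omega)}\|\hat f\|_{L^2(\R^n)}$ is not available on all of $\R^n$. The natural remedy is to embed $\Omega$ in a periodic box and pass to Fourier series: there Parseval gives $\|f\|_{L^2(\Omega)}^2=\sum_k|c_k|^2$, and the embedding $\ell^1\hookrightarrow\ell^2$ on a finite set of modes of cardinality controlled by $\mu(\Omega)$ (consistent with a numerical discretization on a grid with $\approx \mu(\Omega)$ cells) produces the stated constant. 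The $L^\infty$ hypothesis on $f$ plays no role in the convolution estimate itself; it enters only to guarantee $fg\in L^2$ so Plancherel applies.
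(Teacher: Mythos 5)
Your route is the same as the paper's: extend $f$ and $g$ by zero, pass to Fourier space, use Plancherel and the convolution theorem, and apply Young's inequality to arrive at $\|fg\|_{L^2}\le (2\pi)^{-n/2}\|\F{f}\|_{L^1}\|g\|_{L^2}$. Up to that display you and the paper coincide. You have, moreover, put your finger on exactly the weak point: the paper closes by invoking $\|\F{f}\|_{L^1}\le\sqrt{\mu(\Omega)}\,\|\F{f}\|_{L^2}$, i.e.\ it applies the Cauchy--Schwarz embedding $L^2\hookrightarrow L^1$ to $\F{f}$ as if $\F{f}$ were supported on a set of measure $\mu(\Omega)$. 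As you observe, $\F{f}$ lives on all of frequency space and, being the transform of a compactly supported function, is never compactly supported, so this step is unjustified. Your diagnosis of the obstacle is correct and identifies a genuine flaw in the paper's own argument.

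Your proposed repair, however, does not close the gap, and in fact no repair can, because the inequality as stated is false. The Fourier-series substitute fails since a generic $f\in L^\infty(\Omega;\C)$ has infinitely many nonvanishing Fourier coefficients on any periodic box containing $\Omega$, and the bound $\|c\|_{\ell^1}\le\sqrt{M}\,\|c\|_{\ell^2}$ requires the coefficient sequence to be supported on at most $M$ modes; ``cardinality controlled by $\mu(\Omega)$'' is a discretization heuristic, not a theorem. More decisively, take $B_\epsilon\subset\Omega$ with $\mu(B_\epsilon)=\epsilon$ and set $f=g=\epsilon^{-1/2}\chi_{B_\epsilon}$ (so $f\in L^\infty(\Omega;\C)$ and $k=1$). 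Then $\|f\|_{L^2(\Omega;\C)}=\|g\|_{L^2(\Omega;\C)}=1$ while $\|fg\|_{L^2(\Omega;\C)}=\epsilon^{-1/2}\to\infty$ as $\epsilon\to 0$, so no constant depending only on $\mu(\Omega)$ and $n$ can make the claimed bound hold; the same construction, made constant in time, refutes the space-time version. The fact that the $L^\infty$ hypothesis appears in the statement but not in the bound --- which you noted in passing --- is the telltale sign. The only product estimate of this shape that survives is the trivial $\|fg\|_{L^2}\le\|f\|_{L^\infty}\|g\|_{L^2}$, which costs $L^\infty$-control rather than $L^2$-control of the first factor and therefore does not deliver the H\"older-in-$L^2$ continuity estimate for which the lemma is invoked in the proof of Lemma~\ref{lem:VxPsiFrechet}; that downstream argument needs to be reworked as well.
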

\begin{proof}
The Fourier transform $\mathcal{F}:L^2(\Omega;\C)\rightarrow L^2(\Omega;\C)$ is an isometry by Plancherel's theorem, where we extend the functions by zero outside of $\Omega$. Furthermore, for $f\in L^1(\Omega;\C)$, $g\in L^1(\Omega;\C^k)$ the convolution theorem states $\sqrt{2\pi}^n\F{fg}=\F{f}\star \F{g}$. 
Hence, using the embeddings $L^\infty(\Omega;\C) \hookrightarrow L^2(\Omega;\C) \hookrightarrow L^1(\Omega;\C)$, we find
\begin{align*}
	\|fg\|_{L^2(\Omega;\C)}&=\|\F{fg}\|_{L^2(\Omega;\C)}=\frac{1}{\sqrt{2\pi}^n}\|\F{f}\star \F{g}\|_{L^2(\Omega;\C)}
	\leq \frac{1}{\sqrt{2\pi}^n}\|\F{f}\|_{L^1(\Omega;\C)} \|\F{g}\|_{L^2(\Omega;\C)}\\
	&\leq \frac{\sqrt{\mu(\Omega)}}{\sqrt{2\pi}^n}\|\F{f}\|_{L^2(\Omega;\C)} \|\F{g}\|_{L^2(\Omega;\C)}= \frac{\sqrt{\mu(\Omega)}}{\sqrt{2\pi}^n}\|f\|_{L^2(\Omega;\C)} \|g\|_{L^2(\Omega;\C)},
\end{align*}
	where Young's inequality for convolutions was used, see e.g. \cite[Theorem 14.6]{Schilling2005}.
	The second claim is obtained by applying the same calculations 
	on the domain $\Omega \times (0,T)$.
\end{proof}

\begin{lemma}
\label{lem:VxPsiFrechet}
	The map $W \ni \Psi \mapsto V_x(\Psi)\Psi \in Y$ is continuously real-Fréchet differentiable with derivative $\D(V_x(\Psi)\Psi)=A(\Psi)$, where $A(\Psi)\in \Lop(W, Y)$ is given in Lemma \ref{lem:VxPsiGateaux}.
\end{lemma}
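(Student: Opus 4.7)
The strategy is to invoke the standard criterion that a Gâteaux differentiable map between Banach spaces whose Gâteaux derivative depends continuously on the base point is continuously Fréchet differentiable, with the Fréchet derivative coinciding with the Gâteaux derivative. In view of Lemma \ref{lem:VxPsiGateaux}, which already supplies $A(\Psi) \in \Lop(W,Y)$ as the Gâteaux derivative, the only remaining task is to show that $\Psi \mapsto A(\Psi)$ is continuous from $W$ into $\Lop(W,Y)$.

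I would split $A = A_1 + A_2$ and treat the summands separately. For $A_1$, observe that $(A_1(\Psi) - A_1(\Upsilon))\delta\Psi = (V_x(\Psi) - V_x(\Upsilon))\delta\Psi$ pointwise, and that the factor $V_x(\Psi) - V_x(\Upsilon)$ is uniformly bounded in $L^\infty(0,T;L^\infty(\Omega;\R))$ thanks to the cutoff construction \eqref{Vxcuttoff}. Lemma \ref{lem:l2normproduct} then yields
\begin{equation*}
\|(V_x(\Psi) - V_x(\Upsilon))\delta\Psi\|_Y \le c\,\|V_x(\Psi) - V_x(\Upsilon)\|_{Y_1}\,\|\delta\Psi\|_W.
\end{equation*}
Since $V_x \colon \C^N \to \R$ is continuous and bounded and $W \hookrightarrow Y$, the Nemytskii operator $\Psi \mapsto V_x(\Psi)$ is continuous from $W$ into $Y_1$ by a subsequence-plus-dominated-convergence argument, so $\|A_1(\Psi) - A_1(\Upsilon)\|_{\Lop(W,Y)} \to 0$ whenever $\Psi \to \Upsilon$ in $W$.

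For $A_2$ the same plan applies, but one has to be careful because $\partial V_x/\partial\rho$ itself is unbounded as $\rho \to 0$ in three dimensions. The key is to regroup: writing $2\Re(\psi_m \overline{\delta\psi_m}) = \psi_m\overline{\delta\psi_m} + \overline{\psi_m}\delta\psi_m$, the operator $A_2(\Psi)$ has pointwise real-linear kernel entries of the form $\partial V_x/\partial\rho(|\Psi|^2)\,\psi_j\overline{\psi_m}$ and $\partial V_x/\partial\rho(|\Psi|^2)\,\psi_j\psi_m$. The quadratic prefactor in $\Psi$ absorbs the singularity, so these coefficients are uniformly bounded on $\Omega_1\cup\Omega_2$ (by the same estimate already used in the proof of Lemma \ref{lem:VxPsiGateaux}), vanish on $\Omega_3$, and are continuous as functions of the pointwise vector $\Psi \in \C^N$. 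Lemma \ref{lem:l2normproduct} applied componentwise, combined with the same dominated-convergence argument for these bounded continuous kernels, then yields $\|A_2(\Psi) - A_2(\Upsilon)\|_{\Lop(W,Y)} \to 0$.

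The main obstacle is obtaining an operator-norm bound that is uniform in the test direction $\delta\Psi$; Lemma \ref{lem:l2normproduct} is precisely the tool that converts $L^\infty$-boundedness of the multiplier into a quantitative estimate governed by its $L^2$-norm, thereby avoiding the delicate question of whether Nemytskii continuity holds into $L^\infty$ (which generally it does not). A secondary subtlety is the singular behaviour of $\partial V_x/\partial\rho$ near $\rho = 0$ in three dimensions, which I handle by keeping the products $\partial V_x/\partial\rho\cdot\psi_j\overline{\psi_m}$ together as a single bounded continuous object rather than estimating the factors separately.
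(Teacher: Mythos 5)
Your proposal is correct and follows essentially the same route as the paper: the same criterion (a continuous real-Gâteaux derivative upgrades to a continuous real-Fréchet derivative), the same splitting $A=A_1+A_2$, the same regrouping of $\partial V_x/\partial\rho$ with the quadratic factor $\psi_j\overline{\psi_m}$ to absorb the singularity at $\rho=0$, and the same reduction of the operator-norm estimate to Lemma \ref{lem:l2normproduct}. The only real difference is how continuity of the resulting bounded kernels into $Y_1$ is obtained: you use a qualitative subsequence/dominated-convergence (Nemytskii) argument, whereas the paper proves a quantitative pointwise Hölder estimate with exponent $2/n$ and transfers it to $L^2$ via \eqref{eq:HoeldercontiscontL2}; both suffice, the paper's version additionally yielding an explicit modulus of continuity.
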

\begin{proof}
We prove that the real-Gâteaux derivative $A(\Psi)$ of $V_x(\Psi)\Psi$ at $\Psi$ is continuous from $W$ to $\Lop(W,Y)$. Then the real-Fréchet differentiability follows immediately from \cite[Proposition A.3]{AndrewsHopper2011}.

Once is proved that $V_x (\Psi) \Psi$ is real-Fréchet differentiable, the real-Gâteaux and the real-Fréchet derivatives coincide, and the continuity of the real-Gâteaux derivative carries over to the real-Fréchet derivative. Hence, we have to show the following 
\begin{align*}
	\forall \epsilon>0, \, \exists \, \delta >0, \text{ such that } \|A(\Psi)-A(\Phi)\|_{\Lop(W,Y)}<\epsilon, \, \forall \, \|\Psi-\Phi\|_{W} <  \delta.
\end{align*}
For this purpose, to ease our discussion, we consider $\tilde{A}_2(\Psi)\psi_j\psi_m$, $j,m=1,\dotsc,N$, where 
\begin{align*}
	\tilde{A}_2(\Psi)=\alpha_n\begin{cases}
	2\frac{|\Psi|^{2/n-2}}{n} & |\Psi|^2 \leq R,\\
	\pabl{p}{\rho}(|\Psi|^2) & R<|\Psi|^2 <2R,\\
	0 & |\Psi|^2 \geq 2R,
	\end{cases}
\end{align*}
such that $\tilde{A}_2(\Psi) \psi_m \sum_{j=1}^N2\Re(\psi_j\co{\delta\psi_j})=\bigl(A_2(\Psi)(\delta\Psi)\bigr)_m$.
For all $\epsilon>0$, $\Psi\mapsto\tilde{A}_2(\Psi)$ is continuously differentiable with bounded derivative for $|\Psi|\geq \epsilon$, hence the same holds for $\tilde{A}_2(\Psi)\psi_j\psi_m$, which is therefore Lipschitz continuous.
In zero, $\tilde{A}_2(\Psi)\psi_j\psi_m$ is Hölder continuous with exponent $\alpha=\frac{2}{n}$,
\begin{align*}
	|\tilde{A}_2(\Psi)\psi_j\psi_m|&=\frac{2\alpha_n}{n}|\Psi|^{2/n-2}|\psi_j\psi_m| \leq \frac{2\alpha_n}{n}|\Psi|^{2/n-2}|\Psi| |\Psi| =\frac{2\alpha_n}{n}|\Psi|^{2/n}.
\end{align*}
Together, we have Hölder continuity for all $\Psi\in W$,
$|\tilde{A}_2(\Psi)\psi_j\psi_m-\tilde{A}_2(\Phi)\phi_j\phi_m|\leq c|\Psi-\Phi|_{\C^N}^\alpha$ and similarly
$|\tilde{A}_2(\Psi)\co{\psi_j}\psi_m-\tilde{A}_2(\Phi)\co{\phi_j}\phi_m|\leq c|\Psi-\Phi|_{\C^N}^\alpha$.

We remark that, if a function $f:\C^N\rightarrow \C^N$ is Hölder continuous with exponent $\alpha$, i.e. $|f(x)-f(y)|<c|x-y|^\alpha$, then $f:L^2(\Omega;\C^N) \rightarrow L^2(\Omega;\C^N)$ is also Hölder continuous with the same exponent as follows
\begin{equation}\label{eq:HoeldercontiscontL2}\begin{split}
	\|f(x)-f(y)\|_{L^2(\Omega;\C^N)}^2&=\int |f(x(t))-f(y(t))|^2\dd t \leq c^2 \int |x(t)-y(t)|^{2\alpha} \dd t\\ &=c^2\|x-y\|_{L^{2\alpha}(\Omega;\C^N)}^{2\alpha}
	 \leq {c'}^2\|x-y\|_{L^2(\Omega;\C^N)} ^{2\alpha}.
\end{split}\end{equation}

As $\tilde{A}_2(\Psi)\psi_j\psi_m\in L^\infty(0,T;L^\infty(\Omega;\C))$ is bounded, we can apply Lemma \ref{lem:l2normproduct}. 
Now, we turn our attention to $A_2(\Psi)$ and use the Hölder continuity of $\tilde{A}_2(\Psi)\psi_j\psi_m$ to obtain the following estimate. 

We have 
\begin{align*}
	&\|A_2(\Psi)(\delta\Psi)-A_2(\Phi)(\delta\Psi)\|_Y^2\\
	&=\sum_{m=1}^N\int_0^T \int_\Omega |\tilde{A}_2(\Psi)\psi_m \sum_{j=1}^N(\psi_j\co{\delta\psi_j}+\co{\psi_j}\delta\psi_j)-\tilde{A}_2(\Phi)\phi_m \sum_{j=1}^N(\phi_j\co{\delta\psi_j}+\co{\phi_j}\delta\psi_j)|^2\dd x\dd t\\
	&\leq\sum_{m=1}^N \int_0^T \int_\Omega \sum_{j=1}^N|\co{\delta\psi_j}( \tilde{A}_2(\Psi)\psi_j\psi_m-\tilde{A}_2(\Phi)\phi_j\phi_m)|^2 \dd x \dd t\\
	&\quad+\sum_{m=1}^N \int_0^T \int_\Omega \sum_{j=1}^N|\delta\psi_j( \tilde{A}_2(\Psi)\co{\psi_j}\psi_m-\tilde{A}_2(\Phi)\co{\phi_j}\phi_m)|^2 \dd x \dd t\\
	&\leq c \sum_{j=1}^N \|\delta\psi_j\|_{Y_1}^2 \sum_{m=1}^N \left(\|\tilde{A}_2(\Psi)\psi_j\psi_m-\tilde{A}_2(\Phi)\phi_j\phi_m)\|_{Y_1}^2+	\|\tilde{A}_2(\Psi)\co{\psi_j}\psi_m-\tilde{A}_2(\Phi)\co{\phi_j}\phi_m\|_{Y_1}^2\right)\\
	&\leq c' \sum_{j=1}^N \|\delta\psi_j\|_{Y_1}^2 \sum_{m=1}^N \|\Psi-\Phi\|_Y^{2\alpha}\\
	&=c' \|\delta \Psi\|_Y^2 N\|\Psi-\Phi\|_Y^{2\alpha}<c' \|\delta \Psi\|_Y^2 N \delta^{2\alpha}.
\end{align*}
Furthermore, by Lemma \ref{lem:l2normproduct} and the Hölder continuity of $V_x$, we have
\begin{align*}
	\|A_1(\Psi)(\delta\Psi)-A_1(\Phi)(\delta\Psi)\|_Y \leq \|A_1(\Psi)-A_1(\Phi)\|_Y \|\delta\Psi\|_Y \leq c''\|\Psi-\Phi\|_Y^{2/n} \|\delta\Psi\|_Y.
\end{align*}
Now, we have the following 
\begin{align*}
	\|A(\Psi)-A(\Phi)\|_{\Lop(W,Y)}&=\sup_{\delta \Psi\in W\setminus\{0\}} \frac{\|A(\Psi)(\delta\Psi)-A(\Phi)(\delta\Psi)\|_Y}{\|\delta \Psi\|_W}\\
	&\leq (\sqrt{c'N}+c'')\delta^\alpha \sup_{\delta \Psi\in W\setminus\{0\}}  \frac{\|\delta \Psi\|_Y}{\|\delta \Psi\|_W} \leq (\sqrt{c'N}+c'')\delta^\alpha =:\epsilon.
\end{align*}
This completes the proof of the real-Fréchet differentiability of $V_x(\Psi)\Psi$.
\end{proof}

\begin{lemma}\label{lem:VHFrechet}
	The map $\Psi \mapsto V_H(\Psi)\Psi$ is continuously real-Fréchet differentiable from $W$ to $Y$ and from $W$ to $X^*$ with derivative $\D(V_H(\Psi)\Psi)(\delta \Psi)=V_H(\Psi) \delta\Psi+\int_\Omega \frac{ 2\Re\scalarC{\Psi}{\delta\Psi}}{|x-y|}\dd y \Psi$.
\end{lemma}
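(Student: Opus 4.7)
The plan is to follow the same three-step template as in the proof of Lemma \ref{lem:VxPsiFrechet}: first compute the real-Gâteaux derivative $A_H(\Psi) := \D(V_H(\Psi)\Psi)$ and verify that it is a bounded real-linear operator $W \to Y$, then show that $\Psi \mapsto A_H(\Psi)$ is continuous as a map $W \to \Lop(W,Y)$, and finally invoke \cite[Proposition A.3]{AndrewsHopper2011} to upgrade real-Gâteaux to continuous real-Fréchet differentiability.

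The Gâteaux computation is simpler than for $V_x$ because $V_H$ is \emph{linear} in its density argument. Writing $V_H(\rho)(x) = \int_\Omega \rho(y)/|x-y|\,\dd y$, we expand
\begin{align*}
V_H(\Psi+t\delta\Psi) = V_H(\Psi) + t\, V_H\bigl(2\Re\scalarC{\Psi}{\delta\Psi}\bigr) + t^2\, V_H(|\delta\Psi|^2),
\end{align*}
multiply by $(\Psi+t\delta\Psi)$, and read off the $O(t)$ term to obtain exactly the claimed formula. Real-linearity of $\delta\Psi \mapsto A_H(\Psi)(\delta\Psi)$ is immediate from the real-linearity of $\delta\Psi \mapsto 2\Re\scalarC{\Psi}{\delta\Psi}$ and of pointwise multiplication.

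The key analytic ingredient for boundedness is the estimate $\|V_H(\rho)\|_{L^\infty(\Omega)} \leq C\|\rho\|_{L^p(\Omega)}$ for a suitable $p$. Since $\Omega \subset \R^n$ is bounded with $n \in \{2,3\}$, the kernel $y \mapsto 1/|x-y|$ lies in $L^q(\Omega)$ uniformly in $x$ for every $q<n$, and $H^1_0(\Omega) \hookrightarrow L^s(\Omega)$ for $s<\infty$ ($n=2$) or $s\leq 6$ ($n=3$). Hölder's inequality in the convolution then yields the $L^\infty$ bound with $\|\rho\|_{L^p} \leq C\|\Psi\|_{H_0^1}^2$; alternatively one may use elliptic regularity for $-\Delta V_H = c_n \rho$ and the embedding $H^2 \hookrightarrow L^\infty$. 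This bound applied to both summands of $A_H(\Psi)$—the first as pointwise multiplication by the $L^\infty$ function $V_H(\rho)$, the second as pointwise multiplication by $\Psi$ of the $L^\infty$ function $V_H(2\Re\scalarC{\Psi}{\delta\Psi})$—produces the required operator bound in $\Lop(W,Y)$.

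The main obstacle is the continuity step, but here the bilinearity of $V_H$ in $\rho$ combined with $\rho$ being quadratic in $\Psi$ makes $A_H(\Psi)(\delta\Psi)$ \emph{trilinear} in $(\Psi,\Psi,\delta\Psi)$. Adding and subtracting mixed terms and applying the Hartree $L^\infty$ bound summand-by-summand yields
\begin{align*}
\|A_H(\Psi) - A_H(\Phi)\|_{\Lop(W,Y)} \leq C\bigl(\|\Psi\|_W + \|\Phi\|_W\bigr)\|\Psi-\Phi\|_W,
\end{align*}
so $A_H$ is locally Lipschitz, hence continuous. This closes the Fréchet differentiability argument into $Y$, and the statement into $X^*$ follows at once from the continuous embedding $Y \hookrightarrow X^*$ coming from the Gelfand triple $X \hookrightarrow Y \hookrightarrow X^*$ already used in the paper.
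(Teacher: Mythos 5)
Your overall architecture is a legitimate alternative to the paper's: the paper verifies Fréchet differentiability directly by expanding $V_H(\Psi+\delta\Psi)(\Psi+\delta\Psi)$, bounding the three quadratic-and-higher remainder terms, and showing the Fréchet quotient vanishes, whereas you transplant the Gâteaux-plus-continuity-plus-\cite[Proposition A.3]{AndrewsHopper2011} template from Lemma \ref{lem:VxPsiFrechet}. Your route has the advantage of making the ``continuously'' part of the statement explicit (the paper only asserts it at the end), and your observation that $A_H(\Psi)(\delta\Psi)$ is trilinear in $(\Psi,\Psi,\delta\Psi)$ is exactly the right structural reason why a local Lipschitz bound for $\Psi\mapsto A_H(\Psi)$ should hold.

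There is, however, a genuine gap in the analytic estimate you propose. Both of your routes to $\|V_H(\rho)\|_{L^\infty(\Omega)}\leq C\|\rho\|_{L^p(\Omega)}$ --- Hölder against the kernel in $L^q$, $q<n$, or elliptic regularity plus $H^2\hookrightarrow L^\infty$ --- produce a bound in which \emph{both} wavefunction factors of the density carry an $H^1$ norm, i.e. $\|V_H(2\Re\scalarC{\Psi}{\delta\Psi})\|_{L^\infty}\leq C\|\Psi\|_{H^1}\|\delta\Psi\|_{H^1}$ and $\|V_H(\rho)\|_{L^\infty}\leq C\|\Psi\|_{H^1}^2$. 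For a general $\Psi\in W$ the map $t\mapsto\|\Psi(t)\|_{H^1}^2$ is only in $L^1(0,T)$, so when you square and integrate in time to compute $\|A_H(\Psi)(\delta\Psi)\|_Y$ you are faced with integrands such as $\|\Psi(t)\|_{H^1}^4\|\delta\Psi(t)\|_{L^2}^2$ or $\|\Psi(t)\|_{H^1}^2\|\delta\Psi(t)\|_{H^1}^2$, and neither of these is integrable on $(0,T)$ without an $L^\infty(0,T;H^1)$ or $L^4(0,T;H^1)$ bound that $W$ does not provide. The same obstruction defeats your local Lipschitz estimate for $A_H$. What is actually needed --- and what the paper imports from \cite[Lemma 2]{SprengelCiaramellaBorzi2016}, as is visible in the asymmetric form of its three remainder bounds $C_N\|\delta\Psi\|_{L^2}\|\Psi\|_{H^1}\|\delta\Psi\|_{L^2}$ etc. --- is the estimate $\left\|\int_\Omega \frac{\psi\co{\phi}}{|x-y|}\,\dd y\right\|_{L^\infty(\Omega)}\leq C\|\psi\|_{L^2(\Omega)}\|\phi\|_{H^1(\Omega)}$, obtained e.g. by Cauchy--Schwarz in the convolution together with a Hardy-type inequality, which puts the $H^1$ weight on exactly \emph{one} factor. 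With that version, each term in $A_H(\Psi)(\delta\Psi)$ carries a single $H^1$ factor (integrating in time to an $X$-norm) while all remaining $L^2$ factors are controlled uniformly in $t$ via the embedding $W\hookrightarrow C([0,T];L^2(\Omega;\C^N))$; this repairs both your boundedness and your continuity steps, after which the rest of your argument goes through.
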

\begin{proof}
	By \cite[Lemma 2]{SprengelCiaramellaBorzi2016}, $V_H(\Psi)\Psi\in Y$.
	The following expansion holds
	\begin{align*}
			V_H(\Psi+ \delta\Psi)&=V_H(\Psi)+\int_\Omega \frac{ 2\Re \scalarC{\Psi}{\delta \Psi}}{|x-y|}\dd y+\int_\Omega \frac{ \scalarC{\delta\Psi}{\delta \Psi}}{|x-y|}\dd y.
	\end{align*}
	Hence, we get
	\begin{align*}
		V_H(\Psi+\delta\Psi)(\Psi+\delta\Psi)&=V_H(\Psi)\Psi+V_H(\Psi)\delta \Psi+\int_\Omega \frac{ 2\Re \scalarC{\Psi}{\delta \Psi}}{|x-y|}\dd y \Psi\\
		&\quad+\int_\Omega \frac{ 2\Re \scalarC{\Psi}{\delta \Psi}}{|x-y|}\dd y \delta \Psi+\int_\Omega \frac{ \scalarC{\delta\Psi}{\delta \Psi}}{|x-y|}\dd y \Psi+ \int_\Omega \frac{ \scalarC{\delta\Psi}{\delta \Psi}}{|x-y|}\dd y\delta\Psi.
	\end{align*}
	
	By proof of Lemma \cite[Lemma 2]{SprengelCiaramellaBorzi2016}, the last three terms are bounded by\linebreak
		$C_N\|\delta\Psi\|_{L^2(\Omega;\C^N)}\|\Psi\|_{H^1(\Omega;\C^N)}\|\delta\Psi\|_{L^2(\Omega;\C^N)}$,
		$C_N\|\delta\Psi\|_{L^2(\Omega;\C^N)}\|\delta\Psi\|_{H^1(\Omega;\C^N)}\|\Psi\|_{L^2(\Omega;\C^N)}$,
		respective	$C_N\|\delta\Psi\|_{H^1(\Omega;\C^N)}\|\delta\Psi\|_{L^2(\Omega;\C^N)}^2$.
	Defining $\D(V_H(\Psi)\Psi)(\delta \Psi):=V_H(\Psi) \delta\Psi+\int_\Omega \frac{ 2\Re\scalarC{\Psi}{\delta\Psi}}{|x-y|}\dd y \Psi$, and using the embedding $W\hookrightarrow C([0,T];L^2(\Omega;\C^N))$, we obtain
	\begin{align*}
		&\frac{\|V_H(\Psi+\delta\Psi)(\Psi+\delta\Psi)-V_H(\Psi)\Psi-\D(V_H(\Psi)\Psi)(\delta \Psi)\|_{X^*}^2}{\|\delta \Psi\|_W^2}\\
		&\leq\frac{\|V_H(\Psi+\delta\Psi)(\Psi+\delta\Psi)-V_H(\Psi)\Psi-\D(V_H(\Psi)\Psi)(\delta \Psi)\|_Y^2}{\|\delta \Psi\|_W^2}\\
		&\leq\frac{1}{\|\delta \Psi\|_W^2}\left(\int_0^T\|\delta\Psi\|_{L^2(\Omega;\C^N)}^2\|\Psi\|_{H^1(\Omega;\C^N)}^2\|\delta\Psi\|_{L^2(\Omega;\C^N)}^2\right.\\
		&\left.+
		\|\delta\Psi\|_{L^2(\Omega;\C^N)}^2\|\delta\Psi\|_{H^1(\Omega;\C^N)}^2\|\Psi\|_{L^2(\Omega;\C^N)}^2+	\|\delta\Psi\|_{H^1(\Omega;\C^N)}^2\|\delta\Psi\|_{L^2(\Omega;\C^N)}^4 \dd t\right)\\
		&\leq \frac{1}{\|\delta \Psi\|_W^2}\left(\max_{0\leq t \leq T}\|\delta\Psi\|_{L^2(\Omega;\C^N)}^4 \left(\|\Psi\|_X^2+\|\delta\Psi\|_X^2\right)\right.\\
		&\left.+\max_{0\leq t \leq T}\|\delta\Psi\|_{L^2(\Omega;\C^N)}^2\max_{0\leq t \leq T}\|\Psi\|_{L^2(\Omega;\C^N)}^2\|\delta\Psi\|_X^2\right)\\
		&\leq \frac{\|\delta\Psi\|_W^4 \left(\|\Psi\|_X^2+\|\delta\Psi\|_X^2\right)+\|\delta\Psi\|_W^2\|\Psi\|_W^2\|\delta\Psi\|_X^2}{\|\delta \Psi\|_W^2}\\
		&\leq  \|\delta\Psi\|_W^2\left(\|\Psi\|_{X}^2+\|\delta\Psi\|_X^2\right)+
		\|\delta\Psi\|_W^2\|\Psi\|_W^2 \rightarrow 0  \text{ for } \|\delta\Psi\|_W \rightarrow 0.
	\end{align*}
Hence, $V_H(\Psi)\Psi$ is real-Fréchet differentiable with derivative $V_H(\Psi) \delta\Psi+\int_\Omega \frac{ 2\Re\scalarC{\Psi}{\delta\Psi}}{|x-y|}\dd y \Psi$ and the derivative is continuous from $W$ to $\Lop(W,Y)$ and from $W$ to $\Lop(W,X^*)$.
\end{proof}

Using these results, we can now prove Theorem \ref{thm:cFrechet} 
that states the real-Fréchet differentiability of the map $c$.

\begin{proof}[\bf Proof of Theorem \ref{thm:cFrechet}]
We prove that $c$ is a real-Fréchet differentiable function of $\Psi$ and $u$. For this purpose, we first consider the map $\tilde{c}$ 
defined in \eqref{cectilda}. Simple algebraic manipulation 
results in the following 
\begin{align*}
	\tilde{c}(\Psi+ \delta \Psi, u+ \delta u)&= \tilde{c}(\Psi, u)+\tilde{c}( \delta \Psi, u)-V_u \delta u  \Psi-V_u  \delta u  \delta \Psi
\end{align*}
Next, we show that the Fréchet derivative of $\tilde{c}$ is given by
\begin{align}\label{ctildefrechet}
	\D\tilde{c}(\Psi, u)( \delta \Psi,  \delta u)=\tilde{c}( \delta \Psi,u)-V_u \delta u \Psi= i\pabl{ \delta \Psi}{t}-\left(-\nabla^2+V_0+V_u u\right) \delta \Psi-V_u \delta u  \Psi.
\end{align}
To bound the reminder $\tilde{c}(\Psi+\delta\Psi,u+\delta u)-\tilde{c}(\Psi,u)- \D\tilde{c}(\Psi,u)(\delta\Psi,\delta u)$, we consider
\begin{align*}
		\|V_u  \delta u  \delta \Psi\|_Y^2 
	\leq \|V_u\|_{L^\infty(\Omega;\R)}^2 \| \delta u\|_{C[0,T]}^2 \| \delta  \Psi\|_X^2,
\end{align*}
where we use the embedding $H^1(0,T;\R)\hookrightarrow C[0,T]$. 
Further, using the estimate $\| \delta u\|_{C[0,T]}\leq c\| \delta u\|_{H^1(0,T;\R)}$ and $\| \delta  \Psi\|_X \le \| \delta  \Psi\|_W $, we can improve the inequality above in the following sense 
\begin{align*}
 \|V_u  \delta u  \delta \Psi\|_Y \leq c' \| \delta u\|_{H^1(0,T;\R)}\| \delta  \Psi\|_W\leq c'\left(\| \delta u\|_{H^1(0,T;\R)}^2+\| \delta  \Psi\|_W^2\right) \leq c'\left(\| \delta u\|_{H^1(0,T;\R)}+\| \delta  \Psi\|_W\right)^2.
\end{align*}
With this we can show the Fréchet differentiability as follows
\begin{align*}
	\frac{\|V_u  \delta u  \delta \Psi\|_Y}{\| \delta u\|_{H^1(0,T;\R)}+\| \delta \Psi\|_W}\leq c'\left(\| \delta u\|_{H^1(0,T;\R)}+\| \delta  \Psi\|_W\right) \rightarrow 0 \text{ for } \| \delta u\|_{H^1(0,T;\R)}+\| \delta  \Psi\|_W \rightarrow 0.
\end{align*}
Hence, $\tilde{c}$ is Fréchet differentiable, and $\D\tilde{c}$ given in \eqref{ctildefrechet} represents its derivative.
The derivative is continuous from $W\times H^1(0,T;\R)$ to $X^*$.

The exchange potential is continuously real-Fréchet differentiable by Lemma \ref{lem:VxPsiFrechet}, the correlation potential by Assumption \ref{ass:VcFrechet}, and the Hartree potential by Lemma \ref{lem:VHFrechet}.
To summarize, we have the following real-Fréchet derivative of $c$. 
\begin{equation}\label{eq:FrechetDc}\begin{split}
	&Dc(\Psi, u)( \delta\Psi,  \delta u)=c( \delta\Psi, u)-V_u \delta u \Psi-\pabl{V_{xc}}{\rho}2\Re \scalarC{\Psi}{\delta\Psi} -\int_\Omega \frac{ 2\Re\scalarC{\Psi}{\delta\Psi}}{|x-y|}\dd y \Psi\\
	&=i\pabl{ \delta\Psi}{t}-\left(-\nabla^2+V_{ext}(x,t,u)+V_{Hxc}(\Psi)\right) \delta\Psi-V_u \delta u \Psi\\
	&\quad-\pabl{V_{xc}}{\rho}2\Re\scalarC{\Psi}{\delta\Psi}\Psi -\int_\Omega \frac{ 2\Re\scalarC{\Psi}{\delta \Psi}}{|x-y|}\dd y \Psi.
\end{split}\end{equation}
\end{proof}

We have discussed the differentiability properties of the 
differential constraint $c$ that are required below to prove the existence of a minimizer and for establishing the gradient of the reduced cost functional. Next, we study the cost functional $J$.

\begin{theorem}\label{lem:JFrechet}
	The cost functional $J:W\times H^1(0,T;\R) \rightarrow \R$ defined in \eqref{oct:functional} is continuously real-Fréchet differentiable.
\end{theorem}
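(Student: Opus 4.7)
The plan is to decompose $J=J_\beta+J_\eta+J_\nu$ and verify that each summand is continuously real-Fréchet differentiable; the conclusion then follows because a finite sum of such maps has the same property.

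The regularization term $J_\nu(u)=\tfrac{\nu}{2}\|u\|_{H^1(0,T;\R)}^2$ is a continuous quadratic form on $H^1(0,T;\R)$ that does not depend on $\Psi$, so it is trivially (real-)Fréchet differentiable with derivative $\delta u\mapsto\nu\scalarTH{u}{\delta u}$, depending continuously on $u$. For $J_\eta$, I use the continuous embedding $W\hookrightarrow C([0,T];L^2(\Omega;\C^N))$ quoted after Theorem \ref{thm:uniquesol}; it makes the trace $E_T:\Psi\mapsto\Psi(\cdot,T)$ a bounded linear operator from $W$ to $L^2(\Omega;\C^N)$. Since $\chi_A\in L^\infty(\Omega;\R)$, the map $\Psi\mapsto\tfrac{\eta}{2}\int_\Omega\chi_A|\Psi(\cdot,T)|^2\dd x$ is a continuous real-quadratic form on $W$. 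Expanding $|\Psi(T)+\delta\Psi(T)|^2$ directly identifies
\begin{align*}
DJ_\eta(\Psi)\delta\Psi = \eta\int_\Omega\chi_A(x)\Re\scalarC{\Psi(x,T)}{\delta\Psi(x,T)}\dd x,
\end{align*}
and the remainder $\tfrac{\eta}{2}\int_\Omega\chi_A|\delta\Psi(T)|^2\dd x=O(\|\delta\Psi\|_W^2)$ establishes both Fréchet differentiability and continuity of $DJ_\eta$ in a single stroke.

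The bulk of the work lies in $J_\beta$. Expanding $(|\Psi+\delta\Psi|^2-\rho_d)^2-(|\Psi|^2-\rho_d)^2$ isolates the linear-in-$\delta\Psi$ part $4(|\Psi|^2-\rho_d)\Re\scalarC{\Psi}{\delta\Psi}$ and leaves a remainder $r(\Psi,\delta\Psi)$ that is a polynomial in $\Psi,\delta\Psi,\rho_d$ of total degree four in $(\Psi,\delta\Psi)$ and degree at least two in $\delta\Psi$. This identifies the candidate
\begin{align*}
DJ_\beta(\Psi)\delta\Psi = 2\beta\int_0^T\int_\Omega(|\Psi|^2-\rho_d)\Re\scalarC{\Psi(x,t)}{\delta\Psi(x,t)}\dd x\dd t,
\end{align*}
which is a bounded real-linear functional on $W$ provided $|\Psi|\in L^4(Q)$ and $\rho_d\in L^2(Q)$.

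The main obstacle is to control the quartic integrals $\int_0^T\int_\Omega|\Psi|^a|\delta\Psi|^b\dd x\dd t$ with $a+b=4$ and $b\ge 2$ that appear both in the remainder and in the proof that $\Psi\mapsto DJ_\beta(\Psi)\in\Lop(W,\R)$ is continuous. I will handle them via Hölder's inequality combined with the embedding $W\hookrightarrow L^4(0,T;L^4(\Omega;\C^N))$ valid in dimensions $n=2,3$, obtained from $W\hookrightarrow L^\infty(0,T;L^2)\cap L^2(0,T;H^1)$ and Gagliardo--Nirenberg (this is precisely the regularity already invoked to ensure well-definedness of $J_\beta$ in the discussion following \eqref{oct:functional}); cross terms containing $\rho_d$ are controlled by Cauchy--Schwarz. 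These bounds give $\int r\dd x\dd t=O(\|\delta\Psi\|_W^2)=o(\|\delta\Psi\|_W)$. For continuity of $DJ_\beta$, I factorize $(|\Psi|^2-\rho_d)\Psi-(|\Phi|^2-\rho_d)\Phi$ as a polynomial of degree $\le 3$ in $(\Psi,\Phi)$ with an explicit factor of $\Psi-\Phi$, and apply the same Hölder/Sobolev estimates to obtain Lipschitz continuity of $DJ_\beta$ on bounded subsets of $W$. Summing the three contributions completes the proof.
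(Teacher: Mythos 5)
Your decomposition $J=J_\beta+J_\eta+J_\nu$ and your treatment of $J_\nu$ and $J_\eta$ coincide with the paper's proof: the paper likewise differentiates the $H^1$-norm by standard results and handles $J_\eta$ via the embedding $W\hookrightarrow C([0,T];L^2(\Omega;\C^N))$, with the remainder $\tfrac{\eta}{2}\int_\Omega\chi_A|\delta\Psi(T)|^2\dd x=O(\|\delta\Psi\|_W^2)$ and an explicit bound showing $\Psi\mapsto \D J_\eta(\Psi)$ is continuous (indeed linear and bounded) into $\Lop(W,\R)$. For $J_\beta$ you are actually more careful than the paper, which dismisses it as ``a quadratic functional'' and records a derivative formula that is not even linear in $\delta\Psi$; your identification of $\D J_\beta(\Psi)\delta\Psi=2\beta\int_0^T\int_\Omega(\rho-\rho_d)\Re\scalarC{\Psi}{\delta\Psi}\dd x\dd t$, the recognition that the functional is quartic in $\Psi$, and the factorization argument for continuity of the derivative are all the right ingredients and match the directional derivative computed in Appendix \ref{appendix:targetfunctional}.

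The one genuine gap is the claimed embedding $W\hookrightarrow L^4(0,T;L^4(\Omega;\C^N))$ ``in dimensions $n=2,3$.'' Gagliardo--Nirenberg gives $\|\Psi\|_{L^4(\Omega)}\leq C\|\Psi\|_{H^1}^{n/4}\|\Psi\|_{L^2}^{1-n/4}$, so from $L^\infty(0,T;L^2)\cap L^2(0,T;H^1)$ one gets $\int_0^T\|\Psi\|_{L^4}^4\dd t\leq C\|\Psi\|_{L^\infty L^2}^2\|\Psi\|_{X}^2$ only for $n=2$; for $n=3$ the exponent pairing requires $\|\Psi\|_{H^1}^3\in L^1(0,T)$, which $\Psi\in X$ does not provide (the borderline space--time integrability from the energy class in 3D is $L^{10/3}$, not $L^4$). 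So your quartic estimates, and hence the differentiability of $J_\beta$ on all of $W$, fail for $n=3$. The repair is the one the paper itself flags after \eqref{oct:functional}: for $\Psi^0\in H^2(\Omega;\C^N)$ Theorem \ref{thm:uniquesol} gives $\Psi\in L^\infty(0,T;H^2(\Omega;\C^N))\hookrightarrow L^\infty(0,T;L^\infty(\Omega;\C^N))$, and with this a priori $L^\infty$ bound on the relevant states all your quartic integrals are controlled trivially; the price is that for $n=3$ the statement must be read as differentiability of $J_\beta$ on the subset of $W$ where this improved regularity holds, rather than on all of $W$. With that caveat made explicit your argument closes, and it is in fact a more defensible proof of the $J_\beta$ part than the one printed in the paper.
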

\begin{proof}
	The norm $\|u\|_{H^1(0,T;\R)}^2$ is differentiable by standard results with $\D\left(\frac{\nu}{2}\|u\|_{H^1(0,T;\R)}^2\right)(\delta u)=\nu\scalarTH{u}{\delta u}$.
	The tracking term $J_\beta=\frac{\beta}{2}\int_0^T\int_\Omega (\rho(x,t)-\rho_d(x,t))^2\dd x\dd t$ is a quadratic functional and hence Féchet differentiable with derivative
	\begin{align*}
		D J_\beta (\Psi)(\delta \Psi)=2\beta \|(\rho-\rho_d)\Re\scalarC{\Psi}{\delta\Psi}\|^2_Y.
	\end{align*}
	
	$J_η=\frac{η}{2} \int_\Omega \chi_A(x) \rho(x,T) \dd x$ is well defined, because $\Psi\in C([0,T]; L^2(\Omega;\C^N))$. 
	The directional derivative
	\begin{align*}
		\D J_η(\Psi)(\delta\Psi)=η \int_\Omega\chi_A(x) \Re \scalarC{\Psi(x,T)}{\delta \Psi(x,T)}\dd x
	\end{align*}
	is obviously linear and continuous in $\delta\Psi$, hence it is the real-Gâteaux derivative. Furthermore, we have
	\begin{align*}
		&\frac{|J_η(\Psi+\delta\Psi)-J_η(\Psi)-\D J_η(\Psi)(\delta\Psi)|}{\|\delta\Psi\|_W}=\frac{\left| \int_\Omega \chi_A(x) |\delta \Psi(T)|^2\dd x\right|}{\|\delta\Psi\|_W}\leq\frac{\|\delta \Psi(T)\|_{L^2(\Omega;\C^N)}^2}{\|\delta\Psi\|_W}\\
		&\leq \frac{\max_{0\leq t\leq T}\|\delta \Psi(t)\|_{L^2(\Omega;\C^N)}^2}{\|\delta\Psi\|_W} \leq \frac{c\|\delta \Psi\|_W^2}{\|\delta\Psi\|_W}=c \|\delta\Psi\|_W \rightarrow 0 \text{ for } \|\delta\Psi\|_W \rightarrow 0.
	\end{align*}
Therefore $J_η$ is real-Fréchet differentiable from $W$ to $\R$. The Féchet derivative depends linearly on $\Psi$ and is bounded by
	\begin{align*}
		\|\D J_η(\Psi)\|_{\Lop(W,\R)}&=\eta \sup_{\delta\Psi \in W\setminus\{0\}} \frac{\left|\int_\Omega \chi_A(x)\Re \scalarC{\Psi(x,T)}{\delta \Psi(x,T)}\dd x\right|}{\|\delta\Psi\|_W}\\
		&\leq \eta \frac{\|\Psi(T)\|_{L^2(\Omega;\C^N)}\|\delta\Psi(T)\|_{L^2(\Omega;\C^N)}}{\|\delta\Psi\|_W}
		\leq \eta c^2\frac{\|\Psi\|_W \|\delta \Psi\|_W}{\|\delta \Psi\|_W} =\eta c^2 \|\Psi\|_W.
	\end{align*}
	Hence, the derivative is continuous from $W$ to $\Lop(W,\R)$.
\end{proof}

\subsection{Existence of a minimizer}\label{sec:ExistenceMinimizer}
In this section, we discuss existence of a minimizer of the optimization problem \eqref{TDKSopc}. Uniqueness cannot be expected because, e.g., the phase of the wave function does not appear in the cost functional $J$.

We start by collecting some known facts.
We make use of the following version of the Arzel\`{a}-Ascoli theorem which holds also for general Banach spaces, see, e.g., \cite[Theorem 3.10-2]{Ciarlet2013} and remark thereafter.
\begin{lemma}[Arzel\`{a}-Ascoli] \label{lem:ArzelaAscoli}
	Given a sequence $(f_n)_n$ of functions $f_n\in C(K; \mathcal{B})$ where $\mathcal{B}$ is a Banach space and $K=[0,T]$, which is 
	\begin{enumerate}
		\item uniformly bounded, i.e. $\exists M$, such that $\|f_n\|_{C(K;\mathcal{B})}\leq M$, $\forall \, n\in\N$, and
		\item 	
equicontinuous, i.e. given any $\epsilon>0$, there exists $ \delta(\epsilon)>0$, such that $||f_n(t)-f_n(s)||_\mathcal{B}<\epsilon$ for all $t,s\in K$ with $|t-s|< \delta(\epsilon)$ and all $n\in\N$;
	\end{enumerate}
	then there exists a subsequence $(f_{n_l})_l$ and a function $f\in C(K;\mathcal{B})$ such that
	\begin{align*}
		\lim_{l\rightarrow \infty} \|f_{n_l}-f\|_{C(K;\mathcal{B})} =0.
	\end{align*}
\end{lemma}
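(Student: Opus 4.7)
The plan is to follow the classical three-step Arzelà--Ascoli argument adapted to a Banach-space-valued setting: (i) extract a subsequence converging on a countable dense subset of $K$ via a diagonal procedure; (ii) use equicontinuity together with compactness of $K=[0,T]$ to upgrade pointwise convergence to a uniform Cauchy property; (iii) invoke completeness of $C(K;\mathcal{B})$ to identify the uniform limit $f$.

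First I would fix a countable dense subset $D=\{t_k\}_{k\in\mathbb{N}}\subset K$, which exists because $K$ is separable. At each $t_k$, the set $\{f_n(t_k):n\in\mathbb{N}\}$ lies in a relatively compact subset of $\mathcal{B}$ (this needs to be argued from the hypotheses, see below), so one can extract a subsequence $(f_n^{(1)})$ along which $f_n^{(1)}(t_1)$ converges in $\mathcal{B}$, then a further subsequence $(f_n^{(2)})\subset(f_n^{(1)})$ along which $f_n^{(2)}(t_2)$ converges, and so on. The diagonal sequence $f_{n_l}:=f_l^{(l)}$ then converges in $\mathcal{B}$ at every $t_k\in D$. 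Next, given $\varepsilon>0$, equicontinuity yields $\delta=\delta(\varepsilon/3)>0$, and by density of $D$ together with compactness of $K$ one can cover $K$ by finitely many balls $B(s_1,\delta),\dots,B(s_J,\delta)$ with centers $s_j\in D$. Choose $L$ so large that $\|f_{n_l}(s_j)-f_{n_{l'}}(s_j)\|_{\mathcal{B}}<\varepsilon/3$ for all $l,l'\ge L$ and all $j=1,\dots,J$. For arbitrary $t\in K$, pick $s_j$ with $|t-s_j|<\delta$ and use the triangle inequality with the equicontinuity estimates on $\|f_{n_l}(t)-f_{n_l}(s_j)\|_{\mathcal{B}}$ and $\|f_{n_{l'}}(t)-f_{n_{l'}}(s_j)\|_{\mathcal{B}}$ to obtain $\|f_{n_l}(t)-f_{n_{l'}}(t)\|_{\mathcal{B}}<\varepsilon$ uniformly in $t$. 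Hence $(f_{n_l})$ is Cauchy in $C(K;\mathcal{B})$, and since $\mathcal{B}$ is a Banach space this space is complete, yielding a uniform limit $f\in C(K;\mathcal{B})$.

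The main delicate point, which is peculiar to the Banach-valued version, is justifying the pointwise extraction of convergent subsequences: uniform boundedness alone does not imply pointwise relative compactness when $\dim\mathcal{B}=\infty$. The argument therefore relies on interpreting the hypotheses in the sense of \cite{Ciarlet2013}, where uniform boundedness together with equicontinuity already ensures that each pointwise set $\{f_n(t_k)\}$ is totally bounded in $\mathcal{B}$; this total boundedness, combined with completeness, gives the relative compactness needed for the diagonal extraction. Once this subtlety is handled, the remainder of the argument is essentially a direct transcription of the classical real-valued Arzelà--Ascoli proof.
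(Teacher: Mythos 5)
Your overall skeleton (diagonal extraction on a countable dense subset, equicontinuity plus compactness of $[0,T]$ to upgrade pointwise convergence to a uniform Cauchy property, completeness of $C(K;\mathcal{B})$ to produce the limit) is the standard and correct route; note that the paper itself gives no proof of this lemma but only cites \cite[Theorem 3.10-2]{Ciarlet2013}, so there is no in-paper argument to compare against.

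However, there is a genuine gap at exactly the point you flag as ``the main delicate point,'' and your proposed resolution of it is false. Uniform boundedness together with equicontinuity does \emph{not} imply that the pointwise sets $\{f_n(t)\}$ are totally bounded when $\mathcal{B}$ is infinite-dimensional: take $\mathcal{B}=\ell^2$ and $f_n(t)=e_n$ (the $n$-th unit vector) for all $t\in[0,T]$. This sequence is uniformly bounded by $1$ and trivially equicontinuous (each $f_n$ is constant), yet $\|f_n(t)-f_m(t)\|_{\ell^2}=\sqrt{2}$ for $n\neq m$, so no subsequence can converge in $C(K;\mathcal{B})$. This example shows that the lemma, read literally with only hypotheses (1) and (2), is false for a general Banach space, and hence no proof from those hypotheses alone can succeed. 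The correct Banach-valued Arzel\`a--Ascoli theorem (this is what Ciarlet's Theorem 3.10-2 and the remark thereafter actually assert) replaces uniform boundedness by the stronger hypothesis that each set $\{f_n(t):n\in\N\}$ is relatively compact in $\mathcal{B}$; with that hypothesis your diagonal argument and the rest of your proof go through verbatim. In the paper's only application of the lemma this extra hypothesis is in fact available but must be supplied explicitly: there $\mathcal{B}=L^2(\Omega;\C^N)$ and the functions $\Psi_n(t)$ are uniformly bounded in $H^1(\Omega;\C^N)$, so the Rellich--Kondrachov compact embedding $H^1(\Omega;\C^N)\Subset L^2(\Omega;\C^N)$ gives the required pointwise relative compactness. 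Your proof should either add the pointwise relative compactness hypothesis to the statement or derive it from the specific context, rather than claiming it follows from uniform boundedness and equicontinuity.
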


For the purpose of our discussion, notice that the semigroup generated by $H_0=-\nabla^2+V_0$ is denoted by $U(t)=e^{-iH_0t}$. For self-adjoint operators $H_0$, as in our case, 
	$U(t)$ is strongly continuous by the Stone's theorem; see  \cite{Stone1932} and \cite[p. 34]{Yserentant2010}. 
	
We need the following lemma; see also \cite{RuggenthalerPenzVanLeeuwen2015}. 
\begin{lemma}\label{lem:Duhamel}
	The Duhamel form of the TDKS equation \eqref{eq:KSweak} is given by
	\begin{align}
	\Psi(t)&=e^{-iH_0t}\Psi(0)-ie^{-iH_0t}\int_0^t g(s)\dd s \quad \text{with } g(s)=
				e^{iH_0s}\left(uV_u\Psi(s)+V_{Hxc}(\Psi(s))\Psi(s)\right).
	\end{align}
\end{lemma}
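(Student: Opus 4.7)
The strategy is the classical variation of constants (interaction picture) argument. Rewriting \eqref{eq:KSweak} as $i\partial_t \Psi = H_0\Psi + F(\Psi,u)$ with $F(\Psi,u) := uV_u\Psi + V_{Hxc}(\Psi)\Psi$, I isolate the time-independent self-adjoint linear part $H_0 = -\nabla^2 + V_0$, whose unitary group $U(t) = e^{-iH_0 t}$ on $L^2(\Omega;\C^N)$ is available by Stone's theorem, as already noted above. The plan is to show that $\Phi(t) := U(-t)\Psi(t) = e^{iH_0 t}\Psi(t)$ satisfies $\partial_t \Phi(t) = -ig(t)$ in a suitable distributional sense, so that integration gives $\Phi(t) = \Psi(0) - i\int_0^t g(s)\,\dd s$, after which applying $U(t)$ on the left recovers the claimed identity with $\Phi(0)=\Psi(0)$.

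The steps, in order, are as follows. First I verify that $\Phi$ lies in the relevant function spaces: since $U(t)$ is unitary on $L^2(\Omega;\C^N)$ and, by duality and self-adjointness of $H_0$, extends to a unitary group on $H^{-1}(\Omega;\C^N)$ that restricts to one on $H^1_0(\Omega;\C^N)$, we have $\Phi \in W$ whenever $\Psi \in W$, and $g \in L^2(0,T;L^2(\Omega;\C^N))$ by the boundedness assumptions on $V_0$, $V_u$, $V_{Hxc}$ together with the regularity of $\Psi$ from Theorem \ref{thm:uniquesol}. Second, I compute $\partial_t \Phi$ using the product rule in the distributional sense: formally $\partial_t \Phi = iH_0 e^{iH_0 t}\Psi + e^{iH_0 t}\partial_t \Psi = iH_0 \Phi + e^{iH_0 t}\partial_t \Psi$, while \eqref{eq:KSweak} gives $e^{iH_0 t}\partial_t \Psi = -iH_0 e^{iH_0 t}\Psi - ie^{iH_0 t}F(\Psi,u) = -iH_0 \Phi - ig(t)$, so the two $H_0\Phi$ contributions cancel and $\partial_t \Phi = -ig(t)$. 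Third, since the right-hand side is in $L^1(0,T;L^2(\Omega;\C^N))$, integrating from $0$ to $t$ yields $\Phi(t) = \Psi(0) - i\int_0^t g(s)\,\dd s$, and applying $U(t)$ on both sides produces the Duhamel formula.

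The main obstacle is justifying the cancellation of the two $H_0\Phi$ terms rigorously, since $H_0\Psi$ lies only in $H^{-1}(\Omega;\C^N)$ a priori. The clean way is to perform all manipulations in the weak form of \eqref{eq:KSweak}: pair with a test function $\varphi \in H^1_0(\Omega;\C^N)$, move $e^{iH_0 t}$ onto $\varphi$ by self-adjointness, and use that $H_0$ and $e^{iH_0 t}$ commute on the functional-calculus domain, so that the two contributions really do cancel as dualities in $X^*$. Alternatively, I would first prove the identity for smoother initial data $\Psi^0 \in H^2(\Omega;\C^N)\cap H^1_0(\Omega;\C^N)$, where Theorem \ref{thm:uniquesol} yields $\Psi \in L^\infty(0,T; H^2(\Omega;\C^N))$ so $H_0\Psi \in L^\infty(0,T;L^2(\Omega;\C^N))$ and every step is classical, and then pass to the limit by density together with the continuous dependence of the unique weak solution on the initial datum.
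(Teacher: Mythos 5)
Your proposal follows essentially the same route as the paper: both multiply the equation by $e^{iH_0 t}$, recognize the left-hand side as $\frac{\dd}{\dd t}\bigl(ie^{iH_0 t}\Psi(t)\bigr)$ so that the $H_0$ contributions cancel, and integrate from $0$ to $t$. Your additional care about justifying the cancellation weakly (or by density from $H^2$ data) goes beyond the paper's purely formal manipulation, but the underlying argument is identical.
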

\begin{proof}
	We follow the approach in \cite{Salomon2005} and write the following
	\begin{align*}
		i\pa_t \Psi(t)&=\left(-\nabla^2 + V_0+u V_u+V_{Hxc}(\Psi(t))\right)\Psi(t)\\
		&=H_0\Psi(t)+uV_u\Psi(t)+V_{Hxc}(\Psi(t))\Psi(t)\\
		\Leftrightarrow		ie^{iH_0t}(\pa_t\Psi(t)+iH_0\Psi(t))&=e^{iH_0t}\left(uV_u\Psi(t)+V_{Hxc}(\Psi(t))\Psi(t)\right)\\
		\Leftrightarrow		\abl{}{t}\left(ie^{iH_0t}\Psi(t)\right)&=e^{iH_0t}\left(uV_u\Psi(t)+V_{Hxc}(\Psi(t))\Psi(t)\right)\\
		\Leftrightarrow		ie^{iH_0t}\Psi(t)-i\Psi(0)&=\int_0^t e^{iH_0s}\left(uV_u\Psi(s)+V_{Hxc}(\Psi(s))\Psi(s)\right)\dd s , 
		\end{align*}
which proves the lemma. 
\end{proof}

Now, we can prove the existence of a solution to (\ref{TDKSopc}).
\begin{theorem}
	The optimal control problem (\ref{TDKSopc}) with $\Psi^0\in H^1_0(\Omega;\C^N)$ and $\beta=0$ admits at least one solution in 
	$(\Psi, u) \in W \times H^1(0,T;\R)$. In the case $\beta \neq 0$, an optimal solution exists in $W \times H^1(0,T;\R)$ assuming that 
	$\Psi^0\in H^2(\Omega;\C^N) \cap H^1_0(\Omega;\C^N)$.
\end{theorem}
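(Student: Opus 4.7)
The plan is to apply the direct method of calculus of variations. Since $J$ is non-negative, $m := \inf J \geq 0$ and we may select an infimizing sequence $(u_n)_n \subset H^1(0,T;\R)$ with corresponding states $\Psi_n := \Psi(u_n) \in W$ (well defined by Theorem \ref{thm:uniquesol}). From $\nu > 0$ and the bound $\tfrac{\nu}{2}\|u_n\|_{H^1(0,T;\R)}^2 \leq J(\Psi_n,u_n) \leq m+1$ for $n$ large, the sequence $u_n$ is bounded in $H^1(0,T;\R)$. Extracting a subsequence, $u_n \rightharpoonup u^*$ in $H^1(0,T;\R)$ and, by the compact embedding $H^1(0,T;\R)\hookrightarrow C[0,T]$, $u_n \to u^*$ strongly in $C[0,T]$.

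Next I establish compactness of the states. Since $u_n$ is bounded in $H^1(0,T;\R)$ and $V_0,V_u\in L^\infty(\Omega;\R)$, the potentials $V_{ext}(\cdot,\cdot,u_n)$ are bounded uniformly in $H^1(0,T;L^\infty(\Omega;\R))$. Theorem \ref{thm:uniquesol} then yields a uniform bound on $\Psi_n$ in $L^\infty(0,T;H^1_0(\Omega;\C^N))$ for the $\beta=0$ case, and in $L^\infty(0,T;H^2(\Omega;\C^N)\cap H^1_0(\Omega;\C^N))$ when $\Psi^0\in H^2$. Combined with Lemma \ref{lem:normconservation}, $\Psi_n$ is bounded in $C([0,T];L^2(\Omega;\C^N))$. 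To obtain equicontinuity in $t$, I use the Duhamel representation from Lemma \ref{lem:Duhamel}: since $U(t)=e^{-iH_0t}$ is strongly continuous on $L^2$ and unitary, and since $uV_u\Psi_n + V_{Hxc}(\Psi_n)\Psi_n$ is uniformly bounded in $L^\infty(0,T;L^2)$ (using boundedness of $V_u$, $V_c$, the cut-off bound on $V_x$, and the $H^1$-estimate for $V_H(\Psi_n)\Psi_n$ from \cite{SprengelCiaramellaBorzi2016}), one controls $\|\Psi_n(t)-\Psi_n(s)\|_{L^2}$ by a modulus of continuity independent of $n$. Lemma \ref{lem:ArzelaAscoli} then delivers a further subsequence with $\Psi_n \to \Psi^*$ in $C([0,T];L^2(\Omega;\C^N))$.

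I now pass to the limit in the Duhamel formula. The term $e^{-iH_0 t}\Psi^0$ is independent of $n$. In the integrand $e^{iH_0 s}\bigl(u_nV_u\Psi_n + V_{Hxc}(\Psi_n)\Psi_n\bigr)$, strong convergence $u_n\to u^*$ in $C[0,T]$ and $\Psi_n\to\Psi^*$ in $C([0,T];L^2)$ handle the linear term, while the Lipschitz estimates from Lemma on $V_x$, Lemma \ref{lem:VHFrechet} on $V_H$, and the continuity of $V_c(\Psi)\Psi$ (Assumption \ref{ass:VcFrechet}) yield $V_{Hxc}(\Psi_n)\Psi_n \to V_{Hxc}(\Psi^*)\Psi^*$ in $L^2(0,T;L^2)$. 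Passing to the limit shows that $\Psi^*$ satisfies the Duhamel form, hence the weak formulation \eqref{eq:KSweak} with control $u^*$; uniqueness in Theorem \ref{thm:uniquesol} gives $\Psi^* = \Psi(u^*)\in W$.

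Finally I verify weak lower semicontinuity of $J$ along this subsequence. The control cost $J_\nu$ is convex and continuous on $H^1(0,T;\R)$, hence weakly lower semicontinuous, so $J_\nu(u^*)\leq \liminf J_\nu(u_n)$. For $J_\eta$, strong convergence in $C([0,T];L^2)$ at $t=T$ yields $\int_\Omega\chi_A|\Psi_n(T)|^2 \to \int_\Omega\chi_A|\Psi^*(T)|^2$. For $J_\beta$ in the case $\beta\neq 0$, I use the additional regularity $\Psi_n\in L^\infty(0,T;H^2\cap H^1_0)$ from the strengthened Theorem \ref{thm:uniquesol}: by the Sobolev embedding $H^2\hookrightarrow L^\infty$ (for $n\leq 3$) the $\Psi_n$ are uniformly bounded in $L^\infty(0,T;L^\infty(\Omega;\C^N))$, so $\rho_n = |\Psi_n|^2 \to \rho^* = |\Psi^*|^2$ in $L^2(0,T;L^2(\Omega;\R))$, which suffices for $J_\beta(\Psi_n)\to J_\beta(\Psi^*)$. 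Combining, $J(\Psi^*,u^*)\leq \liminf_n J(\Psi_n,u_n) = m$, so $(\Psi^*,u^*)$ is a minimizer. The main obstacle in this argument is the nonlinearity $V_{Hxc}(\Psi)\Psi$: the weak convergence delivered by energy/norm bounds alone is insufficient, and it is essential to upgrade to strong convergence of $\Psi_n$ via the Arzelà-Ascoli compactness argument based on the Duhamel identity together with the Lipschitz properties established earlier for the exchange and Hartree terms.
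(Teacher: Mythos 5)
Your proof is correct and follows the same direct-method skeleton as the paper: coercivity of the $J_\nu$ term gives a bounded minimizing sequence of controls, weak $H^1$- and strong $C[0,T]$-convergence of $u_n$, compactness of the states, identification of the limit state with $\Psi(u^*)$, and lower semicontinuity of each term of $J$. The one genuine difference is in how you identify the limit state. The paper first extracts $\Psi_n\rightharpoonup\hat{\Psi}$ in $X$ and $\Psi_n'\rightharpoonup\hat{\Psi}'$ in $X^*$, upgrades to strong convergence in $Y$ via the compact embedding $W\Subset Y$ (Aubin--Lions), and passes to the limit term by term in the weak formulation \eqref{eq:KSweak}; the Arzel\`a--Ascoli/Duhamel argument is invoked only in the case $\eta\neq 0$, to obtain the $C([0,T];L^2(\Omega;\C^N))$ convergence needed for the terminal observation. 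You instead run the Arzel\`a--Ascoli/Duhamel argument unconditionally and pass to the limit directly in the mild (Duhamel) formulation of Lemma \ref{lem:Duhamel}, so a single compactness argument serves both the state equation and the terminal term --- arguably a tidier organization. The small price is the final step ``the limiting Duhamel identity implies the weak formulation, hence $\Psi^*=\Psi(u^*)$'': this deserves one explicit sentence, e.g.\ by noting that $\Psi(u^*)$ itself satisfies the Duhamel identity and that the mild solution is unique by Gronwall, using the Lipschitz estimates on $V_{Hxc}(\Psi)\Psi$. Both routes otherwise rely on the same ingredients: the Lipschitz/continuity properties of $V_x$, $V_H$, $V_c$, norm conservation for the uniform bound, Stone's theorem for equicontinuity, and the $L^\infty$-in-space-time bound coming from $\Psi^0\in H^2(\Omega;\C^N)$ to make $J_\beta$ pass to the limit.
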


\begin{proof}
	For a given control $u\in H^1(0,T;\R)$, we define $\Psi(u)$ as the unique solution to \eqref{eq:KSweak}. Let $(\Psi_n, u_n):=(\Psi(u_n),u_n)$ be a minimizing sequence of $J$, i.e. $\lim_{n\rightarrow \infty}J(\Psi(u_n),u_n)=\inf_{u\in H^1(0,T;\R)} J(\Psi(u),u)$.
	As $J$ is coercive with respect to $u$ in the $H^1$ norm, the sequence $(u_n)_n$ is bounded in $H^1(0,T;\R)$. Hence, we can extract a weakly convergent subsequence again denoted by $(u_n)_n$, $u_n \rightharpoonup \hat{u}$ in $H^1(0,T;\R)$. By the Rellich-Kondrachov theorem, we then have $u_n\rightarrow \hat{u}$ in $C[0,T]$.
	
	As the controls $u_n$ in the sequence above are globally bounded in $H^1(0,T;\R)$,  by  \cite[Theorem 3]{SprengelCiaramellaBorzi2016} we have $\|\Psi_n\|_{X}\leq K$ and $\|\Psi_n'\|_{X^*}\leq K'$, where the constants $K$, $K'$ can be chosen to be independent of $u_n$. 
	Hence, we can extract weakly convergent subsequences, again denoted by $\Psi_n$, $\Psi_n'$, as follows 
	\begin{align*}
		\Psi_n \stackrel{X}{\rightharpoonup} \hat{\Psi}, && \Psi_n' \stackrel{X^*}{\rightharpoonup} \hat{\Psi}'.
	\end{align*}
	By the Rellich-Kondrachov theorem $H^1(\Omega;\C^N)\Subset L^2(\Omega;\C^N)$ and by \cite[1.5.2]{Lions1969}   $W\Subset Y$, hence
	\begin{equation}
		\Psi_n \stackrel{Y}{\rightarrow} \hat{\Psi}.
	\end{equation}
	
	By Lemma \ref{lem:VxPsiFrechet} and \ref{lem:VHFrechet} and Assumption \ref{ass:VcFrechet}, $\Psi\mapsto V_{Hxc}(\Psi)\Psi$ is real-Fr\'echet differentiable, hence continuous from $W$ to $Y$. 
	Every $(\Psi_n, u_n)$ solves the Schr\"odinger equation \eqref{eq:KSweak} and with the strong convergence of $u_n$ and $\Psi_n$, we can employ \cite[p. 291]{Ciarlet2013} for the products $u_n\Psi_n$ and $V_{Hxc}(\Psi_n)\Psi_n$.
	By standard results, a sequence converging in the $L^2(0,T)$-norm contains a subsequence that converges a.e.\ in $[0,T]$. Hence, we can extract a subsequence, again denoted by $(\Psi_n)$ such that we can pass to the limit. We have 
	\begin{align*}
		\lim_{n\rightarrow \infty} -\scalarCN[L]{i\pabl{\Psi_n}{t}}{\Phi}+\scalarCN[L]{\nabla\Psi_n}{\nabla\Phi}+\scalarCN[L]{V_0\Psi_n+V_u u_n \Psi_n}{\Phi}+\scalarCN[L]{V_{Hxc}(\Psi_n)\Psi_n}{\Phi}\\
		=-\scalarCN[L]{i\pabl{\hat{\Psi}}{t}}{\Phi}+\scalarCN[L]{\nabla\hat{\Psi}}{\nabla\Phi}+\scalarCN[L]{V_0\hat{\Psi}+V_u \hat{u} \hat{\Psi}}{\Phi}+\scalarCN[L]{V_{Hxc}(\hat{\Psi})\hat{\Psi}}{\Phi}
	\end{align*}
	holds a.e.\ in $[0,T]$ and for all $\Phi\in H^1_0(\Omega; \C^N)$, where $L:=L^2(\Omega;C^N)$.
	Hence, $\Psi(\hat{u})=\hat{\Psi}$ and $(\hat{\Psi}, \hat{u})$ solves \eqref{eq:KSweak}.
	
If $\eta \neq 0$, then the 
	evaluation of $\rho$ at the final time $T$ in the cost $J$ 
	is required, such that strong convergence in $C([0,T]; L^2(\Omega;\C^N))$ is needed.
	So far, we only have weak convergence in $W$, which is continuously embedded into $C([0,T]; L^2(\Omega;\C^N))$, but not compactly embedded. 	
	To overcome this problem, we improve our convergence result by using Lemma \ref{lem:ArzelaAscoli}. 
	The required uniform bound is given by Lemma \ref{lem:normconservation}, we have $\|\Psi_n(t)\|_{L^2}=1$, 
	for all $n$ and all $t\in [0,T]$.
	We are left to show the equicontinuity of $\Psi_n$ for the Arzel\`{a}-Ascoli theorem, i.e. find an $ \delta(\epsilon)$ that does not depend on the $n$ of the sequence. To this end, we take a fixed but arbitrary $\epsilon>0$.
	With $\|\Psi_n\|_{L^2}=1$, the Lipschitz continuity of $V_{xc}$, and estimates in \cite[Lemma 2, Theorem 3]{SprengelCiaramellaBorzi2016}, we have
		\begin{align*}
			\left\|\Psi^0-i\int_0^tg(s)\dd s\right\|_{L^2(\Omega;\C^N)} &\leq \|\Psi^0\|_{L^2(\Omega;\C^N)}+\|g\|_Y
			\leq \|\Psi^0\|_{L^2(\Omega;\C^N)}+\|V_{ext}\Psi_n\|_Y\\
			&+\|V_{Hxc}(\Psi_n)\Psi_n\|_Y\\
			&\leq \|\Psi^0\|_{L^2(\Omega;\C^N)}+(\|u_n\|_{C[0,T]} \|V_u\|_{L^\infty(\Omega;\C^N)}+\|V_0\|_{L^\infty(\Omega;\C^N)})\|\Psi_n\|_Y\\
			&+(L+c\|\Psi_n\|_X^2) \|\Psi_n\|_Y\\
			&\leq C,
		\end{align*}
	where $g$ is defined in Lemma \ref{lem:Duhamel}.
	As $U(t)=e^{-iH_0t}$ is continuous by Stone's theorem, there exists a $ \delta_1$ such that 
	\begin{align*}
		\|U(t)-U(t')\|_{\Lop(L^2, L^2)} < \frac{\epsilon}{2C}  \quad \text{for } |t-t'|< \delta_1.
	\end{align*}
	
	Using the Duhamel form from Lemma \ref{lem:Duhamel}, we find for two different times $t$, $t'$, with $|t-t'|< \delta$, the following
	\begin{align*}
		\|\Psi_n(t)-\Psi_n(t')\|_{L^2(\Omega;\C^N)}&\leq \left\|\left( e^{-iH_0t}-e^{iH_0t'}\right) \left( \Psi^0-i\int_0^tg(s)\dd s \right)\right\|_{L^2(\Omega;\C^N)}\\
		&+\left\|e^{-iH_0t'}\int_{t'}^t g(s)\dd s\right\|_{L^2(\Omega;\C^N)}. 
	\end{align*}	
	The first term is bounded by
	\begin{align*}
		\|U(t)-U(t')\|_{\Lop(L^2(\Omega;\C^N), L^2(\Omega;\C^N))}\left\|\Psi^0-i\int_0^tg(s)\dd s\right\|_{L^2(\Omega;\C^N)} \leq \frac{\epsilon}{2C} C=\frac{\epsilon}{2}.
	\end{align*}

	For the second term, we have
	\begin{align*}
		\left\| \int_{t'}^t g(s) \dd s\right\|_{L^2(\Omega;\C^N)}^2&=\int_\Omega \left|\int_{t'}^t g(s, x)\dd s\right|^2\dd x\leq \int_\Omega\left( \int_{t'}^t |g(s,x)| \dd s\right)^2 \dd x \\
		&=\int_{\Omega} \|g(\cdot, x)\|_{L^1(t',t;\C^N)}^2 \dd x \leq \int_\Omega \sqrt{|t-t'|}^2 \|g(\cdot, x)\|_{L^2(t',t;\C^N)}^2 \dd x\\
	&\leq |t-t'| \int_\Omega \int_{t'}^t |g(s,x)|^2 \dd s \dd x\\
		&=|t-t'| \int_{t'}^t \|e^{iH_0s}\left(u V_u\Psi_n(s)+V_{Hxc}(\Psi_n(s))\Psi_n(s)\right)\|_{L^2(\Omega;\C^N)}^2 \dd s.
			\end{align*}
Using the fact that that $e^{iH_0s}$ is unitary, we obtain 
	\begin{align*}		
		&\left\| \int_{t'}^t g(s) \dd s\right\|_{L^2(\Omega;\C^N)}^2\\
		&\leq  |t-t'| \int_{t'}^t \|\left(V_u\Psi_n(s)+V_{xc}(\Psi_n(s))\Psi_n(s)+V_{H}(\Psi_n(s))\Psi_n(s)\right)\|_{L^2(\Omega;\C^N)}^2 \dd s\\
		&\leq  3|t-t'| \int_{t'}^t C^2 \|u_n\|_{C[0,T]}^2 \|\Psi_n(s)\|_{L^2(\Omega;\C^N)}^2+\|V_{xc}(\Psi_n(s))\Psi_n(s)\|_{L^2(\Omega;\C^N)}^2+\|V_H(\Psi_n(s))\Psi_n(s)\|_{L^2(\Omega;\C^N)}^2 \dd s\\
		&\leq 3|t-t'| \int_{t'}^t C'+L\|\Psi_n(s)\|_{L^2(\Omega;\C^N)}^2+C''\|\Psi_n(s)\|_{H^1(\Omega;\C^N)}^4 \|\Psi_n(s)\|_{L^2(\Omega;\C^N)}^2 \dd s\\
		&\leq K |t-t'|^2,
	\end{align*}
where we used the Lipschitz continuity of $V_{xc}=V_x+V_c$ and \cite[Lemma 2]{SprengelCiaramellaBorzi2016} for the estimate on $V_H$. Furthermore, we used Lemma \ref{lem:normconservation} $\|\Psi_n\|_{L^2(\Omega;\C^N)}=1$, and according to \cite[Theorem 3]{SprengelCiaramellaBorzi2016} we have $\|\Psi(t)\|_{H^1(\Omega;\C^N)}\leq C(\Psi^0)$. Moreover, since the $u_n$ are from a bounded sequence, then $\|u_n\|_{C[0,T]}$ is bounded by a global constant.
		
	Now, we define $ \delta:=\min\{ \delta_1, \frac{\epsilon}{2\sqrt{K}}\}$. Then
	\begin{align}
		\|\Psi_n(t)-\Psi_n(t')\|_{L^2(\Omega;\C^N)} < \frac{\epsilon}{2} + \sqrt{K} \delta \leq \epsilon \quad \forall |t-t'|< \delta \text{ and } \forall \, n\geq 1.
	\end{align}
	This means, that the sequence $(\Psi_n)_n$ is equicontinous. As it is also uniformly bounded, we can invoke the Arzelà-Ascoli theorem (Lemma \ref{lem:ArzelaAscoli}) to conclude that there exists a subsequence $\Psi_{n_l}$ and a function $\hat{\Psi}\in C(0,T;L^2(\Omega;\C))$, such that $\lim_{l\rightarrow \infty }\Psi_{n_l}= \hat{\Psi}$ in $C(0,T;L^2(\Omega;\C))$.
	With the strong convergence in $C(0,T;L^2(\Omega;\C))$, we have
	\begin{align*}
		\rho(\hat{u})(\cdot,T)=\sum_{j=1}^N|\Psi_j(\hat{u})(\cdot, T)|^2=\sum_{j=1}^N|\hat{\Psi}_j(\cdot, T)|^2=\hat{\rho}(\cdot, T).
	\end{align*}
	With this result, we have the convergence of \eqref{eq:exminimizer:liminf} also for a target depending only on the final time.

	For the target $J_\beta$ to be well defined, we need to assume higher regularity. Assuming $\Psi^0\in H^2(\Omega;\C^N)$, we get $\Psi\in L^\infty(0,T;H^2(\Omega;\C^N))$ from \cite{SprengelCiaramellaBorzi2016}. Due to the embedding $L^\infty(0,T;H^2(\Omega;\C^N)) \hookrightarrow L^\infty(0,T;L^\infty(\Omega;\C))$ the wavefunction is therefore globally bounded in space and time by a constant depending on the initial condition and $\|u\|_{H^1(0,T;\R)}$.  As $\|u_n\|_{H^1(0,T;\R)}$ is bounded, we have $\|\Psi_n\|_{L^\infty(0,T;L^\infty(\Omega;\C))}<K$, $\|\hat{\Psi}\|_{L^\infty(0,T;L^\infty(\Omega;\C))}<K$ and the same holds for the squares $\rho_n,\hat{\rho}$.
	
	As $\Psi_n \stackrel{Y}{\rightarrow} \hat{\Psi}$, so does $\co{\Psi}_n \stackrel{Y}{\rightarrow} \co{\hat{\Psi}}$. Hence the product converges in the $L^1$-norm, $\rho_n \stackrel{L^1(0,T;L^1(\Omega;\R))}{\longrightarrow} \hat{\rho}$. Furthermore, $(\rho_n)_n$ converges in the $Y$-norm as follows
	\begin{align*}
		\|\rho_n-\hat{\rho}\|_Y^2&= \|\rho_n(\rho_n-\hat{\rho})+\hat{\rho}(\rho_n-\hat{\rho})\|_{L^1(0,T;L^1(\Omega;\R))}\\
		&\leq \|\rho_n\|_{L^\infty(0,T;L^\infty(\Omega;\R))}\|\rho_n-\hat{\rho}\|_{L^1(0,T;L^1(\Omega;\R))}+\|\hat{\rho}\|_{L^\infty(0,T;L^\infty(\Omega;\R))}\|\rho_n-\hat{\rho}\|_{L^1(0,T;L^1(\Omega;\R))}.
	\end{align*}
	As $J_\beta(\Psi)=\frac{\beta}{2}\|\rho-\rho_d\|_Y^2$ and the norm is continuous, we can pass the limit.
	
	The norm $\|u\|_{H^1(0,T;\R)}$ is weakly lower semicontinuous. Hence, $(\hat{\Psi}, \hat{u})$ minimizes \eqref{TDKSopc},
	\begin{equation*}\label{eq:exminimizer:liminf}
		J(\hat{\Psi},\hat{u}) \leq \liminf_{n\rightarrow \infty} J(\Psi_n, u_n) = \inf_{u\in H^1(0,T;\R)} J(\Psi(u), u). \qedhere
	\end{equation*}
\end{proof}

\subsection{Necessary optimality conditions}

In this section, we discuss the Lagrange multiplier $\Lambda$ and  state the first-order optimality condition for a minimum.

We start showing the existence of a Lagrange multiplier in the Lagrange framework.
\begin{theorem}
	Given a control $u$ and a corresponding state $\Psi$, there exists a Lagrange multiplier $\Lambda \in X$ associated with $(\Psi,u)$.
\end{theorem}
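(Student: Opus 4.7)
The plan is to realize $\Lambda$ as the unique solution to the linear backward (adjoint) Schrödinger--type system \eqref{optsystem:adjoint}--\eqref{optsystem:adjoint:initial} and then check that it satisfies the Lagrange identity. The existence argument proceeds in two pieces: establishing well-posedness of that linear backward problem in $X$, and then verifying that the resulting $\Lambda$ is indeed the Lagrange multiplier associated to $(\Psi,u)$.

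First, I would freeze the state $\Psi \in W$ (whose existence and improved regularity are guaranteed by Theorem \ref{thm:uniquesol}) and view \eqref{optsystem:adjoint} as a linear, non-autonomous Schrödinger equation for $\Lambda$. All coefficients that multiply $\Lambda$, namely $V_{ext}(\cdot,\cdot,u)$, $V_H(\rho)$ and $V_{xc}(\rho)$, are real-valued and bounded, uniformly in $t$, by Assumption (\ref{assumptionVcBounded}) on $V_c$, by the cutoff construction \eqref{Vxcuttoff} for $V_x$, by boundedness of $V_0,V_u$ from the standing assumptions, and by the embedding $H^1(0,T;\R)\hookrightarrow C[0,T]$. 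The inhomogeneities of \eqref{optsystem:adjoint} split into (i) the terms $\nabla_\psi L_H$ and $\nabla_\psi L_{xc}$, which are \emph{real-linear} in $\Lambda$ (they depend on $\mathrm{Re}(\psi_j\overline{\lambda_j})$), but whose operator norms are controlled by the same $L^\infty$-type bounds just mentioned, together with Lemma~\ref{lem:VHFrechet}'s estimate on the Hartree convolution; and (ii) the genuine forcing $-2\beta(\rho-\rho_d)\psi_m$, which lies in $X^*$ provided $\Psi\in L^4(0,T;L^4)$ (available because $\Psi^0\in H^2$, by Theorem \ref{thm:uniquesol}). The terminal condition $\lambda_m(T)=-i\eta\chi_A\psi_m(T)$ lies in $L^2(\Omega;\C)$ by the trace $\Psi\in C([0,T];L^2)$.

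Next I would change the time variable $\tau:=T-t$ to turn the terminal-value problem into an initial-value problem of exactly the same structural form as \eqref{eq:KSweak}, except that (a) the nonlinear self-consistent potential $V_{Hxc}(\Psi)\Psi$ is replaced by the fixed bounded potential $V_{Hxc}(\rho)$ acting linearly on $\Lambda$, and (b) there are additional real-linear bounded perturbations (the $\nabla_\psi L_H$, $\nabla_\psi L_{xc}$ terms) and an $X^*$-source. This problem is linear, so well-posedness follows from the extension of Theorem \ref{thm:uniquesol} already cited from \cite{SprengelCiaramellaBorzi2016}: a standard Galerkin/fixed-point argument (contraction in $W$ on short time intervals, then concatenation using global bounds from the linearity and the boundedness of the coefficients) yields a unique $\Lambda\in W\hookrightarrow X$.

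Finally I would check the Lagrange identity. The natural construction is to test the adjoint equation against an arbitrary $\delta\Psi\in W$ with $\delta\Psi(0)=0$, integrate by parts in $t$ (the boundary term at $t=T$ is absorbed by the terminal condition $\lambda_m(T)=-i\eta\chi_A\psi_m(T)$ and reproduces $DJ_\eta(\Psi)\delta\Psi$), and recognize the right-hand side as $\langle D_\Psi J(\Psi,u),\delta\Psi\rangle + \langle \Lambda, D_\Psi c(\Psi,u)\delta\Psi\rangle_{X,X^*}$, using the explicit formula \eqref{eq:FrechetDc} for $D_\Psi c$ from the proof of Theorem \ref{thm:cFrechet}; the symmetry of the Hartree kernel and the factor $2\mathrm{Re}$ make the Hartree and exchange-correlation cross terms match exactly the $\nabla_\psi L_H$, $\nabla_\psi L_{xc}$ terms. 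This identifies $\Lambda$ as the Lagrange multiplier.

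The main obstacle is item (a): the linearized constraint contains the real-linear terms coming from the derivative of the Hartree and exchange-correlation potentials, and the adjoint problem therefore is not a pure Schrödinger equation but carries a bounded real-linear perturbation that couples $\lambda_m$ to $\overline{\lambda_j}$ through the density. One must verify that the existence theory from \cite{SprengelCiaramellaBorzi2016} survives this real-linear (rather than $\C$-linear) perturbation; the most transparent way is to pass to the real formulation from Remark \ref{CH3:TDKS:RealDiffRemark}, where the coupling becomes a standard bounded lower-order term, and apply the linear theory there.
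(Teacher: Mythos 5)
Your proof is essentially correct, but it takes a genuinely different route from the paper. The paper's own proof of this theorem is abstract and very short: it invokes the Zowe--Kurcyusz constraint qualification, which reduces to checking that $J$ and $c$ are continuously real-Fréchet differentiable (Theorem \ref{lem:JFrechet} and Theorem \ref{thm:cFrechet}) and that $\D c(\Psi,u):W\times H^1(0,T;\R)\to X^*$ is surjective --- the latter being exactly the solvability of the linearized state equation with arbitrary right-hand side $F\in X^*$ cited from \cite{SprengelCiaramellaBorzi2016}. This yields a multiplier $\Lambda$ in the dual of $X^*$, i.e.\ in $X$, with no further information. Your construction instead realizes $\Lambda$ explicitly as the solution of the backward adjoint system and verifies the Lagrange identity by integration by parts; this buys more (a multiplier in $W$ rather than merely in $X$, together with its characterization as the adjoint state), at the price of needing the well-posedness of the linear backward problem with its real-linear Hartree/exchange-correlation coupling and $X^*$-forcing. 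Interestingly, this is precisely the strategy the paper itself switches to immediately after this theorem --- it remarks that the Zowe--Kurcyusz result is not used further because $\Lambda\in W$ is needed, and then proceeds exactly as you do in Theorem \ref{thm:gradient}. Your closing observation about the real-linear (non-$\C$-linear) perturbation and the passage to the real formulation of Remark \ref{CH3:TDKS:RealDiffRemark} is the right way to make the cited linear existence theory apply; the paper glosses over this point by citing \cite{SprengelCiaramellaBorzi2016} directly for the adjoint equation.
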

\begin{proof}
	The constraint $c$ and the objective $J$ are continuously real-Fréchet differentiable by Lemma \ref{lem:JFrechet} and Theorem \ref{thm:cFrechet}, and the derivative $\D c(\Psi, u): W\times H^1(0,T;\R)\rightarrow X^*$ is surjective by \cite{SprengelCiaramellaBorzi2016}, where the results can readily extended to a nonzero right-hand side $F\in X^*$.
	Hence the constraint qualification of Zowe and Kurcyusz is fulfilled \cite{ZoweKurcyusz1979}. Therefore, we have the existence of a  Lagrange multiplier $\Lambda \in X$; see, e.g., \cite[Section 6.1]{TroeltzschEnglish}.
\end{proof}
As we need higher regularity, namely $\Lambda \in W$, we do not make further use of this result. Instead, we proceed in a different way using the existence of a unique solution of the adjoint equation in $W$ from \cite{SprengelCiaramellaBorzi2016}.

As $c(\Psi,u)=0$ is uniquely solvable, we have the following equivalent formulation of the optimization problem.
\begin{lemma} The minimization problem \eqref{TDKSopc} is equivalent to the unconstrained minimization of the reduced cost functional $\hat{J}(u):=J(\Psi(u),u)$,
	\begin{align}
		\min_{(\Psi, u)\in W\times H^1(0,T;\R)} J(\Psi, u), \text{ s.t. } c(\Psi, u)=0
		\quad \Leftrightarrow \quad  \min_{u\in H^1(0,T;\R)} \hat{J}(u).
	\end{align}
\end{lemma}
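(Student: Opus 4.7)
The plan is to observe that the equivalence is a direct consequence of the unique solvability of the state equation $c(\Psi,u)=0$, already established in Theorem \ref{thm:uniquesol}. No new analysis is needed; what must be done carefully is to match infima on both sides and check that minimizers are preserved by the control-to-state map.

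First, I would invoke Theorem \ref{thm:uniquesol} to conclude that for every $u\in H^1(0,T;\R)$ there exists a unique $\Psi(u)\in W$ solving \eqref{eq:KSweak}, so that the control-to-state map $u\mapsto\Psi(u)$ is well defined as a map from $H^1(0,T;\R)$ into $W$. Consequently, the feasible set $\mathcal{F}:=\{(\Psi,u)\in W\times H^1(0,T;\R)\mid c(\Psi,u)=0\}$ is in one-to-one correspondence with $H^1(0,T;\R)$ via the bijection $u\mapsto(\Psi(u),u)$, and $\hat{J}(u):=J(\Psi(u),u)$ is well defined on $H^1(0,T;\R)$ (its evaluation requires $\Psi(u)$ to have the regularity stated after \eqref{TDKSopc}, which is guaranteed by the initial-condition hypothesis already standing from \eqref{TDKSopc}).

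Next I would verify equality of the two infima. For any feasible pair $(\Psi,u)\in\mathcal{F}$, uniqueness gives $\Psi=\Psi(u)$, hence
\begin{equation*}
\inf_{(\Psi,u)\in\mathcal{F}} J(\Psi,u)
=\inf_{u\in H^1(0,T;\R)} J(\Psi(u),u)
=\inf_{u\in H^1(0,T;\R)} \hat{J}(u).
\end{equation*}
Then I would check that minimizers correspond: if $u^\star$ minimizes $\hat{J}$, then $(\Psi(u^\star),u^\star)\in\mathcal{F}$ and attains the constrained infimum; conversely, if $(\Psi^\star,u^\star)\in\mathcal{F}$ minimizes $J$ on $\mathcal{F}$, then by uniqueness $\Psi^\star=\Psi(u^\star)$ and $u^\star$ minimizes $\hat{J}$.

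There is no real obstacle here: the only subtlety worth flagging is that the equivalence relies on the well-definedness of the map $u\mapsto\Psi(u)$ in the correct function space, which is precisely what Theorem \ref{thm:uniquesol} provides under the standing hypothesis $\Psi^0\in H^1(\Omega;\C^N)$ (and $H^2$ regularity when $\beta\neq 0$ so that the term $J_\beta$ is finite). Everything else is bookkeeping.
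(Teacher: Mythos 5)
Your argument is correct and is essentially the paper's own reasoning: the paper proves this lemma only by the one-line remark that $c(\Psi,u)=0$ is uniquely solvable (via Theorem \ref{thm:uniquesol}), which is exactly the bijection between the feasible set and $H^1(0,T;\R)$ that you spell out. Your version simply makes the bookkeeping of infima and minimizers explicit.
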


To calculate the gradient of the reduced cost functional, we make use of the implicit function theorem; see, e.g., \cite[p. 548]{Ciarlet2013}. We apply the chain rule for the real-Fréchet derivative to the cost functional as follows. 
\begin{lemma}\label{lem:formofgradient}
	The derivative of the reduced cost functional is given by
	\begin{align}
		\D \hat{J}(u) (\delta u)&=\D_\Psi J(\Psi(u), u) \D_u\Psi(u) (\delta u)+ \D_u J(\Psi(u),u) (\delta u),
	\end{align}
	where $\D_u\Psi(u) \delta u= \delta\psi$ is the solution to the linearized equation
	\begin{align}
		\bigl(\D c(\Psi,u)\bigr)( \delta\Psi,  \delta u)=\D_\Psi c(\Psi, u) (\delta \Psi)+\D_u c(\Psi, u) (\delta u)=0.
	\end{align}
\end{lemma}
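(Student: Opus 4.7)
The plan is to apply the implicit function theorem to the constraint $c(\Psi,u)=0$ in order to express $\Psi$ locally as a continuously real-Fréchet differentiable function of $u$, and then to recover $\D\hat J$ by the chain rule applied to $\hat J(u) = J(\Psi(u), u)$.

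First, I would invoke Theorem \ref{thm:cFrechet}, which guarantees that $c \colon W \times H^1(0,T;\R) \to X^*$ is continuously real-Fréchet differentiable, and extract from \eqref{eq:FrechetDc} the partial derivative $\D_\Psi c(\Psi, u) \in \Lop(W, X^*)$ by setting $\delta u = 0$. The implicit function theorem requires $\D_\Psi c(\Psi(u), u)$ to be a topological isomorphism from $W$ onto $X^*$. Concretely, for any $F \in X^*$ one must establish existence and uniqueness of $\delta\Psi \in W$ solving the linearized TDKS equation $\D_\Psi c(\Psi, u)(\delta\Psi) = F$ with vanishing initial condition. This is a linear Schrödinger-type problem whose perturbation consists of the linearizations of the Hartree and exchange--correlation terms around $\Psi$; by Lemma \ref{lem:VxPsiFrechet}, Lemma \ref{lem:VHFrechet} and Assumption \ref{ass:VcFrechet} these are bounded linear maps from $W$ into $X^*$ (in fact into $Y$). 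Hence the same energy/Galerkin argument used for Theorem \ref{thm:uniquesol} in \cite{SprengelCiaramellaBorzi2016} applies essentially unchanged, establishing bijectivity of $\D_\Psi c(\Psi, u)$; boundedness of the inverse then follows from the open mapping theorem.

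The implicit function theorem now produces a continuously real-Fréchet differentiable solution map $u \mapsto \Psi(u)$ on a neighbourhood of any fixed control, with directional derivative $\delta\Psi := \D_u\Psi(u)(\delta u)$ characterized by differentiating the identity $c(\Psi(u), u) \equiv 0$, namely
\begin{equation*}
\D_\Psi c(\Psi(u), u)\,\delta\Psi + \D_u c(\Psi(u), u)\,\delta u = 0,
\end{equation*}
which is exactly the linearized equation in the statement. Since $J$ is continuously real-Fréchet differentiable by Theorem \ref{lem:JFrechet} and $u \mapsto (\Psi(u), u)$ is continuously real-Fréchet differentiable by the previous step, the chain rule for real-Fréchet derivatives (valid in view of Remark \ref{CH3:TDKS:RealDiffRemark}) yields precisely the claimed formula for $\D\hat J(u)(\delta u)$.

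The main technical obstacle is therefore verifying invertibility of $\D_\Psi c(\Psi, u)$ between $W$ and $X^*$: injectivity amounts to uniqueness for the linearized TDKS equation, while surjectivity requires extending the existence theorem for the nonlinear TDKS equation to tolerate an inhomogeneous right-hand side in $X^*$. Once this linear theory is in hand, the remainder of the argument is routine bookkeeping with the implicit function theorem and the chain rule.
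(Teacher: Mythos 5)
Your proposal is correct and follows essentially the same route as the paper's proof: both rely on the continuous real-Fréchet differentiability of $c$ from Theorem \ref{thm:cFrechet}, establish bijectivity of $\D_\Psi c(\Psi,u):W\to X^*$ via the unique solvability of the linearized TDKS equation with an arbitrary right-hand side $F\in X^*$ (citing \cite{SprengelCiaramellaBorzi2016}), invoke the implicit function theorem to obtain the differentiable solution map $u\mapsto\Psi(u)$, and conclude with the chain rule after differentiating $c(\Psi(u),u)=0$. Your additional remarks on the open mapping theorem and on the perturbation terms being bounded linear maps are consistent with, and slightly more explicit than, the paper's argument.
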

\begin{proof}
The chain rule for the real-Fréchet derivative is given by
\begin{align*}
	\D \hat{J}(u)=\D_\Psi J(\Psi(u), u) \D_u\Psi(u)+ \D_u J(\Psi(u),u).
\end{align*}

To show that the term $\D_u \Psi(u)$ is well defined, we use the implicit function theorem, which ensures differentiability of the map $u\mapsto \Psi(u)$, $H^1(0,T;\R)\rightarrow W$. To this end, we show that the Fréchet derivative $\D_\Psi c(\Psi, u):W\rightarrow X^*$ is a bijection at any $(\Psi,u)\in W\times H^1(0,T;\R)$.

	 We consider the derivative of $c$ with respect to $ \delta\Psi$. This is given by 
	\begin{align*}
	\D_\Psi c(\Psi,u)(\delta\Psi)&=i\pabl{ \delta\Psi}{t}-\left(-\nabla^2+V_{ext}(x,t,u)+V_{Hxc}(\Psi)\right) \delta\Psi
	-\pabl{V_{xc}}{\rho}2\Re\scalarC{\Psi}{\delta\Psi}\Psi\\
	&\quad -\int_\Omega \frac{ 2\Re\scalarC{\Psi}{\delta \Psi}}{|x-y|}\dd y \Psi.
	\end{align*}
	As shown in \cite{SprengelCiaramellaBorzi2016}, the equation $\D_\Psi c(\Psi,u)(\delta\Psi)=F$ with the initial condition $\delta \Psi(t=0)=0$ is uniquely solvable for any right-hand side $F\in X^*$. Hence $\D_\Psi c(\hat{\Psi},\hat{u})$ is a bijection.
	
Now, from $c(\Psi,u)=0$, we have 
\begin{align*}
	\bigl(\D c(\Psi,u)\bigr)( \delta\Psi,  \delta u)=\D_\Psi c(\Psi, u) (\delta \Psi)+\D_u c(\Psi, u) (\delta u)=0.
\end{align*}
Solving this equation for $\delta\Psi$ results in the following 
\begin{align}
\label{dPsix}
	 \delta \Psi=-\bigl(\D_\Psi c(\Psi,u) \bigr)^{-1} \bigl(\D_u c(\Psi, u) \bigr)  (\delta u) = \D_u \Psi(u)  (\delta u).
\end{align}
This means that the solution $ \delta \Psi$ of the linearized equation $\D c=0$ is in fact $\D_u\Psi(u)$.

All together, we find
\begin{align*}
	\D \hat{J}(u) (\delta u)&=\D_\Psi J(\Psi(u), u) \D_u\Psi(u) (\delta u)+ \D_u J(\Psi(u),u) (\delta u)\\
	&=\D_\Psi J(\Psi(u),u) (\delta \Psi)+\D_u J(\Psi(u),u) (\delta u),
\end{align*}
where $ \delta\Psi$ is given by \eqref{dPsix}.
\end{proof}

\begin{theorem}\label{thm:gradient}
The $H^1$-gradient of the reduced optimization problem is given by
	\begin{align}\label{thm:gradient:gradformula}
		\nabla \hat{J}(t) = \nu u(t)+\mu(t),
	\end{align}
where $\mu$ is the $H^1$-Riesz representative of the continuous linear functional\linebreak $\scalarT{-\Re\scalarCN{\Lambda}{V_u\Psi}}{\cdot}$, 
$\Psi$ is the unique solution of $c(\Psi,u)=0$, and $\Lambda$ is the unique solution of $a(\Psi,u,\Lambda)=0$, where
	\begin{equation}\label{eq:adjoint}\begin{split}
		a(\Psi,u,\Lambda):&=i\pabl{\Lambda}{t}-\left(-\nabla^2 +V_{ext}(x,t,u)+V_{Hxc}(\Psi)\right)\Lambda-\pabl{V_{xc}}{\rho}2\Re\scalarC{\Psi}{\Lambda}\Psi\\
		&\quad-\int_\Omega \frac{2\Re\scalarC{\Psi}{\Lambda}}{|y-x|}\dd y\Psi+2\beta(\rho-\rho_d)\Psi ,
	\end{split}\end{equation}
	with the terminal condition
	\begin{align}\label{eq:adjoint:terminal}
		i\Lambda(T)= \eta \, \chi_A \,\Psi(T). 
	\end{align}
\end{theorem}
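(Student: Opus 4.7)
The plan is to start from Lemma \ref{lem:formofgradient}, which expresses $\D\hat{J}(u)(\delta u)=\D_\Psi J(\Psi,u)(\delta\Psi)+\D_u J(\Psi,u)(\delta u)$ with $\delta\Psi\in W$ solving the linearized state equation $\D_\Psi c(\Psi,u)(\delta\Psi)=V_u\,\delta u\,\Psi$ and $\delta\Psi(0)=0$; see \eqref{eq:FrechetDc}. The purpose of introducing the adjoint $\Lambda$ is to remove the implicit dependence of $\delta\Psi$ on $\delta u$ from the first term. I take as given the existence of a unique $\Lambda\in W$ solving the backward problem $a(\Psi,u,\Lambda)=0$ with $i\Lambda(T)=\eta\chi_A\Psi(T)$; this follows from the extension of Theorem \ref{thm:uniquesol} cited from \cite{SprengelCiaramellaBorzi2016}.

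I would then test the linearized state equation against $\Lambda$ in the $Y$ pairing, take the real part, and transpose every operator acting on $\delta\Psi$ to act on $\Lambda$ instead. Integration by parts in time on $\Re\scalarCN[Y]{i\partial_t\delta\Psi}{\Lambda}$ produces $-\Re\scalarCN[Y]{\delta\Psi}{i\partial_t\Lambda}$ together with a boundary term at $t=T$ (the $t=0$ contribution vanishes thanks to $\delta\Psi(0)=0$). Spatial integration by parts transposes $-\nabla^2$ using homogeneous Dirichlet data; the real multiplicative potentials $V_{ext}$ and $V_{Hxc}(\Psi)$ transpose trivially. The two genuinely nonlinear pieces in \eqref{eq:FrechetDc}, namely the LDA term with $\pabl{V_{xc}}{\rho}$ and the Hartree integral with kernel $\tfrac{1}{|x-y|}$, require a short symmetry argument: each underlying trilinear form in $(\Psi,\delta\Psi,\Lambda)$ is symmetric under the exchange $\delta\Psi\leftrightarrow\Lambda$ (by Fubini and symmetry of the Coulomb kernel in $x,y$), so swapping reproduces exactly the nonlinear operators appearing in \eqref{eq:adjoint}.

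Choosing $\Lambda$ to satisfy \eqref{eq:adjoint}--\eqref{eq:adjoint:terminal}, the interior source $2\beta(\rho-\rho_d)\Psi$ matches $\D_\Psi J_\beta(\Psi)(\delta\Psi)$, and the terminal condition $i\Lambda(T)=\eta\chi_A\Psi(T)$ converts the $t=T$ boundary contribution into $\D_\Psi J_\eta(\Psi)(\delta\Psi)$ computed in the proof of Theorem \ref{lem:JFrechet}. All remaining interior terms cancel, leaving $\D_\Psi J(\Psi,u)(\delta\Psi)=-\int_0^T\Re\scalarCN{\Lambda(t)}{V_u\Psi(t)}\,\delta u(t)\dd t$. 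Combined with $\D_u J(\Psi,u)(\delta u)=\nu\scalarTH{u}{\delta u}$ this gives
\begin{equation*}
	\D\hat{J}(u)(\delta u)=\nu\scalarTH{u}{\delta u}+\scalarT{-\Re\scalarCN{\Lambda}{V_u\Psi}}{\delta u}.
\end{equation*}

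Since $V_u\in L^\infty(\Omega;\R)$ and $\Psi,\Lambda\in W\hookrightarrow C([0,T];L^2(\Omega;\C^N))$, the map $t\mapsto-\Re\scalarCN{\Lambda(t)}{V_u\Psi(t)}$ lies in $L^2(0,T;\R)$, so the second functional is continuous on $H^1(0,T;\R)$ and admits a Riesz representative $\mu\in H^1(0,T;\R)$. The identity above then reads $\D\hat{J}(u)(\delta u)=\scalarTH{\nu u+\mu}{\delta u}$, whence $\nabla\hat{J}(t)=\nu u(t)+\mu(t)$. The main obstacle I anticipate is justifying the time integration-by-parts rigorously at only $W$-regularity: the natural pairing is the $\langle X^*,X\rangle$ duality bracket rather than the $Y$ inner product, the $t=T$ trace must be interpreted via $W\hookrightarrow C([0,T];L^2(\Omega;\C^N))$, and one has to verify carefully that the Fubini-based transpositions of the nonlinear LDA and Hartree terms reproduce verbatim the operators listed in \eqref{eq:adjoint}.
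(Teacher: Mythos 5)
Your proposal is correct and follows essentially the same route as the paper: the paper likewise starts from Lemma \ref{lem:formofgradient}, pairs the adjoint with the linearized state via the fundamental theorem of calculus in time (which is exactly your time integration by parts, with the $t=0$ term killed by $\delta\Psi(0)=0$ and the $t=T$ term absorbed by the terminal condition), transposes the spatial and nonlinear terms so that the $\beta$-source and the boundary term reproduce $\D_\Psi J$, and identifies the residual $-\Re\scalarCN{\Lambda}{V_u\Psi}\,\delta u$ with $\scalarTH{\mu}{\delta u}$ via the Riesz representative. Your explicit remark on the symmetry of the trilinear LDA and Hartree forms under $\delta\Psi\leftrightarrow\Lambda$ is a point the paper leaves implicit, but it is the same cancellation.
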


\begin{proof} We want to calculate the Riesz representative of $\D \hat{J}(u) (\delta u)$. By Lemma \ref{lem:formofgradient}, we have
\begin{align*}
	\D \hat{J}(u) (\delta u)&=\D_\Psi J(\Psi(u), u) \D_u\Psi(u) (\delta u)+ \D_u J(\Psi(u),u) (\delta u).
\end{align*}
The derivative $\D_u J(\Psi(u),u) (\delta u)=\nu \scalarTH{u}{\delta u}$, hence the gradient of the second term is given by
\begin{align}\label{thm:gradient:regpart}
	\scalarTH{\nabla_u J(\Psi, u)}{\delta u}=\nu \scalarTH{u}{\delta u}.
\end{align}
To express the first term as an operator acting on $\delta u$, we need to use the fact from Lemma \ref{lem:formofgradient}, that $\D_u\Psi(u) (\delta u)=\delta \Psi$ is the solution of the linearized equation.
Using the directional derivative calulated in the Appendix \ref{appendix:targetfunctional}, we obtain for the first term
\begin{align}
	&\D_\Psi J(\Psi,u)(\delta \Psi)\nonumber\\
	&=\beta \int_0^T\int_\Omega  (\rho-\rho_d)2\Re\scalarC{\Psi}{\delta \Psi} \dd x\dd t+\int_\Omega 
	 \eta\chi_A \Re\scalarC{\Psi(T)}{\delta\Psi(T)}\dd x\nonumber\\
	&=\beta 2\Re\scalar[Y]{(\rho-\rho_d)\Psi}{\delta\Psi}
	+ 
	\Re\scalarCN{ 
	\eta\chi_A \Psi(T)}{\delta\Psi(T)}.\label{thm:gradient:eq1}
\end{align}
To simplify the equation, we focus on the second term in \eqref{thm:gradient:eq1}. By the continuous embedding $W\hookrightarrow C([0,T]; L^2(\Omega;\C^N))$, we can invoke the fundamental theorem of calculus in time.
\begin{align*}
	\Lambda(T)\co{\delta\Psi(T)}
	=\Lambda(0)\co{\delta\Psi(0)}+\int_0^T \abl{}{t}\left(\Lambda(t)\co{\delta\Psi(t)}\right)\dd t
	=\Lambda(0)\co{\delta\Psi(0)}+
	 \int_0^T\pabl{\Lambda(t)}{t}\co{\delta\Psi(t)}+\Lambda(t)\co{\pabl{\delta\Psi(t)}{t}}\dd t.
\end{align*}
Observing that $\delta\Psi(0)=0$ and using equations \eqref{eq:adjoint}, \eqref{eq:adjoint:terminal} for $\Lambda$ and the fact \eqref{eq:FrechetDc} that $\delta\Psi$ is the solution of the linearized equation, as well as the previous result, we obtain
\begin{align*}
	&\int_\Omega 
	\eta \chi_A\Re\scalarC{\Psi(T)}{\delta\Psi(T)}\dd x 
	= 
	\Re\scalarCN{ 
	\eta \chi_A \Psi(T)}{\delta\Psi(T)} 
	=\Re\scalarCN{i \Lambda(T)}{\delta\Psi(T)}\\
	&=\Re\int_0^T\scalarCN{(-\nabla^2+V_{ext}(u)+V_{Hxc}(\Psi))\Lambda+\pabl{V_{xc}}{\rho}2\Re\scalarC{\Psi}{\Lambda}\Psi+\int_\Omega \frac{2\Re\scalarC{\Psi}{\Lambda}}{|y-x|}\dd y\Psi\right.\\
	&\left. \phantom{\int_\Omega}-2\beta(\rho-\rho_d)\Psi}{\delta\Psi}\dd t
	\\
	&-\Re\int_0^T\left(\Lambda(t),\left(-\nabla^2+V_{ext}(x,t,u)+V_{Hxc}(\Psi)\right) \delta\Psi+V_u \delta u \Psi
		+\pabl{V_{xc}}{\rho}2\Re\scalarC{\Psi}{\delta\Psi}\Psi \right.\\
	&\quad +\left.\int_\Omega \frac{ 2\Re\scalarC{\Psi}{\delta \Psi}}{|x-y|}\dd y \Psi\right)_{L^2} \dd t.
\end{align*}
Using integration by parts with the zero boundary condition, this simplifies to
\begin{align*}
	\int_\Omega 
	\eta \chi_A \Re\scalarC{\Psi(T)}{\delta\Psi(T)}\dd x
	=2\Re\scalar[Y]{-\beta(\rho-\rho_d)\Psi}{\delta\Psi}-\Re\scalar[Y]{\Lambda}{V_u\delta u\Psi}.
\end{align*}
Using this result in \eqref{thm:gradient:eq1}, we obtain
\begin{align}
	\D_\Psi J(\Psi,u)(\delta \Psi)
	=-\Re\scalar[Y]{\Lambda}{V_u\delta u\Psi}
	=\scalarTH{\mu}{\delta u} \label{thm:gradient:statepart}
\end{align}
as, by definition of $\mu$, $\scalarT{-\Re\scalarCN{\Lambda}{V_u\Psi}}{\delta u}=\scalarTH{\mu}{\delta u}$.

Adding \eqref{thm:gradient:statepart} and \eqref{thm:gradient:regpart} together, we obtain \eqref{thm:gradient:gradformula}.
\end{proof}

\begin{theorem}
	Given a local solution $(\Psi, u)\in W\times H^1(0,T;\R)$ of the minimization problem \eqref{TDKSopc}, i.e. $\hat{J}(u)\leq \hat{J}(\tilde{u})$ for all $\|u-\tilde{u}\|_{H^1(0,T;\R)}<\epsilon$ for some fixed $\epsilon>0$. Then there exists a unique Lagrange multiplier $\Lambda\in W$, such that the following first order optimality system is fulfilled.
	\begin{subequations}\begin{align}
		c(\Psi,u)=0, && \Psi(0)&=\Psi^0, \label{thm:necess:c}\\
		a(\Psi,u,\Lambda)=0, && i\Lambda(T)&= 
		\eta \chi_A \Psi(T),\label{thm:necess:a}\\
		\nabla \hat{J}(u)=0,\label{thm:necess:grad}
	\end{align}\end{subequations}
	where $\nabla \hat{J}(u)$ is given by Theorem \ref{thm:gradient}.
\end{theorem}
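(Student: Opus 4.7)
The plan is to derive the optimality system directly from the reduced formulation, avoiding the abstract Lagrange-multiplier construction which only yields $\Lambda \in X$. Since $(\Psi,u)$ is a local minimizer of the constrained problem \eqref{TDKSopc} and the state equation $c(\Psi,u)=0$ is uniquely solvable (\cref{thm:uniquesol}), the equivalence lemma preceding \cref{thm:gradient} implies that $u$ is a local minimizer of the reduced cost $\hat{J}$ over $H^1(0,T;\R)$. Moreover, $\hat{J}$ is continuously real-Fréchet differentiable: $J$ is so by \cref{lem:JFrechet}, $c$ is so by \cref{thm:cFrechet}, and the implicit-function-theorem argument used in the proof of \cref{lem:formofgradient} provides the differentiability of $u \mapsto \Psi(u)$.

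The first step is then immediate: $c(\Psi,u)=0$ with $\Psi(0)=\Psi^0$ holds by the definition of $\Psi=\Psi(u)$, giving \eqref{thm:necess:c}. Next, I would invoke the existence result from \cite{SprengelCiaramellaBorzi2016} for the backward-in-time linear Schrödinger-type equation \eqref{eq:adjoint}-\eqref{eq:adjoint:terminal} (note the reversed time and the $\Psi$-dependent but affine-in-$\Lambda$ right-hand side): since the terminal datum $-i\eta\chi_A \Psi(T)$ lies in $L^2(\Omega;\C^N)$ (in fact in $H^1_0$ under our regularity assumptions on $\Psi^0$, using \cref{thm:uniquesol}), and the coefficients share the structure covered by that theorem after time reversal, there exists a unique $\Lambda \in W$ solving \eqref{thm:necess:a}. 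Uniqueness is key here because it is what allows us to speak of \emph{the} Lagrange multiplier and justifies using it in the gradient formula of \cref{thm:gradient}.

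Finally, to obtain \eqref{thm:necess:grad}, I would appeal to the standard first-order necessary condition on the unconstrained reduced problem: since $u$ is a local minimum of the Fréchet differentiable functional $\hat{J}$ on the Hilbert space $H^1(0,T;\R)$, its $H^1$-gradient must vanish, i.e.\ $\nabla \hat{J}(u)=0$. Using \cref{thm:gradient} with the $\Lambda$ constructed in the previous step gives $\nu u + \mu = 0$ in $H^1(0,T;\R)$, where $\mu$ is the Riesz representative of $\delta u \mapsto -\Re(\Lambda,V_u\Psi)_Y$. Combining the three displayed equations yields the claimed optimality system, and the uniqueness of $\Lambda$ in $W$ follows from the uniqueness clause of the adjoint existence theorem.

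The main obstacle I anticipate is the regularity argument for $\Lambda$: the adjoint equation contains the nonlocal Hartree-type term $\int_\Omega \tfrac{2\Re(\Psi,\Lambda)_\C}{|y-x|}\dd y\,\Psi$ and the pointwise term $\tfrac{\partial V_{xc}}{\partial \rho}\,2\Re(\Psi,\Lambda)_\C\,\Psi$, both of which are linear in $\Lambda$ but with coefficients that depend on $\Psi$ in a nontrivial way. Showing that the existence theory of \cite{SprengelCiaramellaBorzi2016} genuinely applies (after time reversal $t \mapsto T-t$) to this linear but non-self-adjoint problem — and in particular that the forcing $2\beta(\rho-\rho_d)\Psi$ belongs to $X^*$ and the terminal datum to $H^1_0$ — is the delicate point, but it reduces to bounds already established in \cref{lem:VxPsiFrechet}, \cref{lem:VHFrechet}, and the remark on operator norms after \cref{thm:cFrechet}.
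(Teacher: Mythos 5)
Your proposal is correct and follows essentially the same route as the paper: the paper's proof also works in the reduced formulation, obtains the unique adjoint $\Lambda\in W$ directly from the existence theorem of \cite{SprengelCiaramellaBorzi2016} applied to \eqref{thm:necess:a}, and concludes $\nabla\hat{J}(u)=0$ from the differentiability of $\hat{J}$ at a local minimum. Your additional discussion of why the adjoint existence theory applies (time reversal, regularity of the terminal datum and forcing) is a more careful spelling-out of what the paper delegates entirely to the cited reference.
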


\begin{proof}
For all admissible pairs $(\Psi, u)$ that satisfy \eqref{thm:necess:c}, the adjoint problem \eqref{thm:necess:a} has a unique solution in $W$ by \cite{SprengelCiaramellaBorzi2016}. As $\hat{J}$ is differentiable, a local minimum is characterized 
by a zero gradient; see, e.g., \cite{BorziSchulz2012}, hence \eqref{thm:necess:grad} holds.
\end{proof}

\section{Numerical approximation and optimization schemes}
\label{sec:implementation}
In this section, we illustrate an approximation scheme for the 
TDKS equation and its adjoint and discuss an optimization algorithm to solve the optimality system \eqref{optsystem:forward}--\eqref{optsystem:optcond} (or equivalently \eqref{thm:necess:c}--\eqref{thm:necess:grad}).

To solve the TDKS equations \eqref{optsystem:forward}, we use the Strang splitting \cite{StrangFaou2014} given by
\begin{equation}\label{Numeric:Strang}\begin{split}
	\psi_j'&=e^{i \delta t  \nabla^2}e^{-i \frac{ \delta t}{2} V(\Psi(t),t)}\psi(t)_j,\\
	\psi_j(t+ \delta t)&=e^{-i\frac{ \delta t}{2} V(\Psi',t+ \delta t)}\psi_j',
\end{split}\end{equation}
where $V=V_{ext}+V_{Hxc}$ and $j=1,\dotsc,N$.
To solve the adjoint TDKS equation \eqref{optsystem:adjoint}, we have to include the inhomogeneous right-hand side as follows.
\begin{equation}\label{Numeric:StrangAdjoint}
\begin{split}
	\psi_j'&=e^{-i\frac{\delta t}{2}\nabla^2}
	\left( e^{-i\frac{\delta  t}{2}\nabla^2}e^{i \frac{\delta t}{2}V(\Psi(t),t)} \psi_j(t)+i \delta t g_j(t- \delta t/2) \right),\\
	\psi_j(t- \delta t)&
	 =e^{i \frac{\delta t}{2}V(\Psi',t- \delta t)}\psi_j',
\end{split}
\end{equation}
with a right-hand side $g(t)$, e.g. $g_j(t)=-2\beta (\rho(t)-\rho_d)\psi_j(t)$. The Laplacian $-\nabla^2$ is evaluated spectrally. 
With this setting, we obtain a discretization scheme that 
provides second-order convergence in time and analytic convergence in space. This is proved for constant potentials in \cite{BaoJinMarkowich2002}, and there is numerical evidence that this accuracy performance also holds in the case of variable potentials.

Furthermore, this time-splitting scheme is unconditional stable and norm preserving, as well as time reversible and gauge invariant. The latter means that adding a constant to the potential changes only the phase of the wave function in such a way that discrete quadratic observables are not changed \cite{BaoJinMarkowich2002}.

We are not able to give a proof for the convergence rate 
of the time-splitting discretization scheme in the presence 
of nonlinearity as it appears in the TDKS equation. However, we refer to results on the Strang splitting for similar SE. For the inhomogeneous case, the importance of an inhomogeneity that vanishes at the boundary is stressed in \cite{StrangFaou2014}. In \cite{Thalhammer2012} the  Strang splitting for the Gross-Pitaevskii equation is studied, and second-order convergence in time and spectral convergence in space is proved. Further, 
quantum models with Lipschitz nonlinearities are studied in \cite{Besse2002}, where second-order convergence in time is obtained. 

Even though these results cannot be applied directly to our problem, 
they suggest that similar accuracy can be expected in our case. Therefore, we study this accuracy issue numerically. To this end, we solve the TDKS equations for two interacting particles in a harmonic trap $V_{ext}=50x^2$. The used initial condition is given by the coherent states of  two non-interacting particles in the harmonic oscillator.
We set $\Omega=(0,L)^2$, $L=7$, and the time interval is $[0, 0.1]$.
Since an analytic solution is not available, we consider a reference solution $\Psi_{\text{reference}}$ obtained solving the problem on a very fine mesh.
In Figure \ref{fig:convergencePDE}, we report results of numerical experiments showing second-order  convergence in time (slope factor $1.995$) and spectral convergence in space.

\begin{figure}[ht]
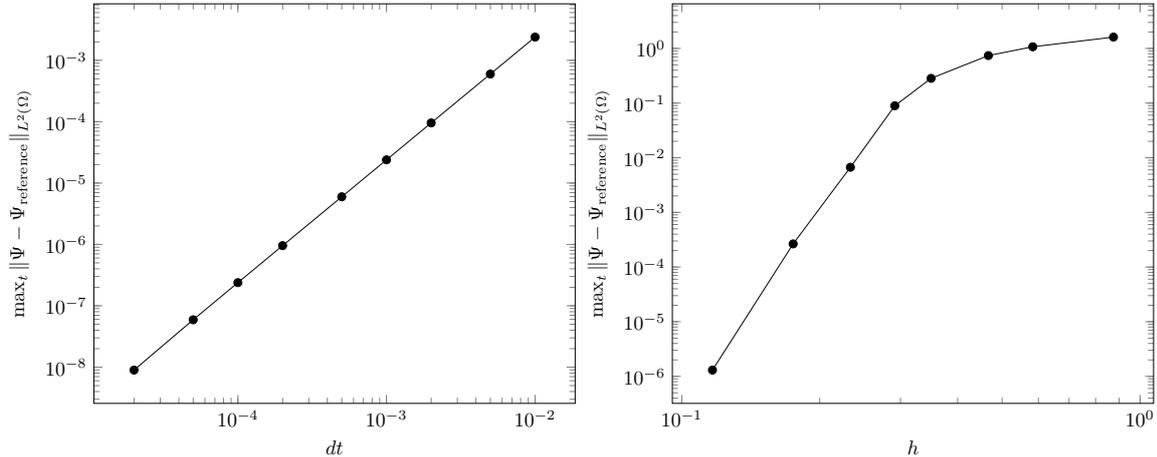

	\begin{subfigure}[b]{0.47\textwidth}
		\includegraphics[page=1,width=\textwidth]{Plots}
		\caption{Solution error for the TDKS equation depending 
		on the time step size; $h=0.1186$.}
	\end{subfigure}
	\begin{subfigure}[b]{0.47\textwidth}
		\includegraphics[page=2,width=\textwidth]{Plots}
	\caption{Solution error for the TDKS equation depending 
		on the space mesh size; $dt=5\cdot10^{-6}$.}
	\end{subfigure}
	\caption{Accuracy of the numerical TDKS solution.}
	\label{fig:convergencePDE}
\end{figure}

Next, we illustrate our implementation of the nonlinear conjugate gradient (NCG) method to solve our optimization problem. 
We follow the approach in \cite{HagerZhang}. The minimization algorithm is given in Algorithm \ref{alg:NCG}, where 
Algorithm \ref{alg:Gradient} is called to compute the 
reduced gradient.

\begin{algorithm}[H] 
\caption{(TDDFT optimization with NCG)}
\label{alg:NCG}
\begin{algorithmic}
\small
  \REQUIRE Admissible initial control $u^0(t)$;
  \ENSURE Optimal control $u^{opt}(t)$;
	\STATE Set $n=0$;
  \WHILE{$n < n_{max}$}
  \STATE
  \begin{itemize}\itemsep0em
  	\item Set $n \leftarrow n+1$;
	  \item Calculate the gradient of the reduced cost functional $\nabla_u \hat{J}(u^n)$ by Algorithm \ref{alg:Gradient};
	  \item If norm of gradient $\|\nabla_u \hat{J}(u^n)\|_{H^1}$ is smaller than tolerance, break;
	  \item Use the Hager-Zhang scheme \cite{HagerZhang} to find a new decent direction $d^n$;
	  \item Find a step length $\alpha^n$ by a  line search (we use the method from \cite[p. 60-61]{NocedalWright2006}) along this direction. 
	  	\item Update the control $u^{n}=u^{n-1}+\alpha^n d^n$;
  \end{itemize}
  \ENDWHILE
\end{algorithmic}
\end{algorithm}

\begin{algorithm}[H] 
\caption{(Gradient of reduced cost functional)}
\label{alg:Gradient}
\begin{algorithmic}
\small
  \REQUIRE control $u(t)$, $\Psi^0$, $\Lambda^T$;
  \ENSURE $\nabla \hat{J}(t)$;
  \STATE Solve the forward equation \eqref{optsystem:forward} for the given control $u(t)$ to obtain $\Psi(x,t)$;
  \STATE Solve the adjoint equation \eqref{optsystem:adjoint} for the given control $u(t)$ and the solution of the forward equation $\Psi(x,t)$ to obtain $\Lambda(x,t)$;
  \STATE The gradient is given by \eqref{thm:gradient:gradformula} with the given $u(t), \Psi(x,t), \Lambda(x,t)$;	
\end{algorithmic}
\end{algorithm}

\section{Numerical experiments}\label{sec:experiments}
In this section, we present results of numerical experiments to 
numerically validate our optimization framework.

In all experiments, we consider $N=2$ interacting electrons in $n=2$, so $x=(\xone, \xtwo)^T\in \R^2$. 
Confining electrons to a two-dimensional surface region is used to model quantum dots, which are nowadays widely used in applications; see, e.g., \cite{HansonKouwenhovenPettaTaruchaVandersypen2007} for a review on quantum dots.

As initial guess for the control, we take $u_0= 0$. Different 
results with different choices of the optimization parameters 
are denoted differently. In the figures below, the 
results obtained with $\nu=10^{-5}$ are shown with dotted lines; 
in the case $\nu=10^{-6}$ we plot dash-dotted lines, and with $\nu=10^{-7}$, we use dashed lines. 

In our first experiment, our objective is that the density of 2 electrons follows a prescribed trajectory ($\beta=1$, $\eta=0$). 
Our target trajectory is produced by an oscillating strength of the harmonic confinement $V_{ext}(x,t,u)=50 (\xone^2+ \xtwo^2)+u_d(t)(\xone^2+ \xtwo^2)$ with a forcing $u_d$. Therefore, 
our purpose is to track the density resulting from the prescribed forcing term $u_d$ (solid line in Figure \ref{num:tracking:control}). The stopping criterion for convergence is $\|\nabla J\|_{H^1(0,T;\R)}<5\cdot 10^{-7}$.

The results of this experiment are presented in Figure \ref{num:tracking}. We see that the trajectory is tracked more closely for smaller values of $\nu$, which require 
larger computational effort, while with larger $\nu$ the stopping criterion is met after fewer iterations. 

\begin{figure}[ht]
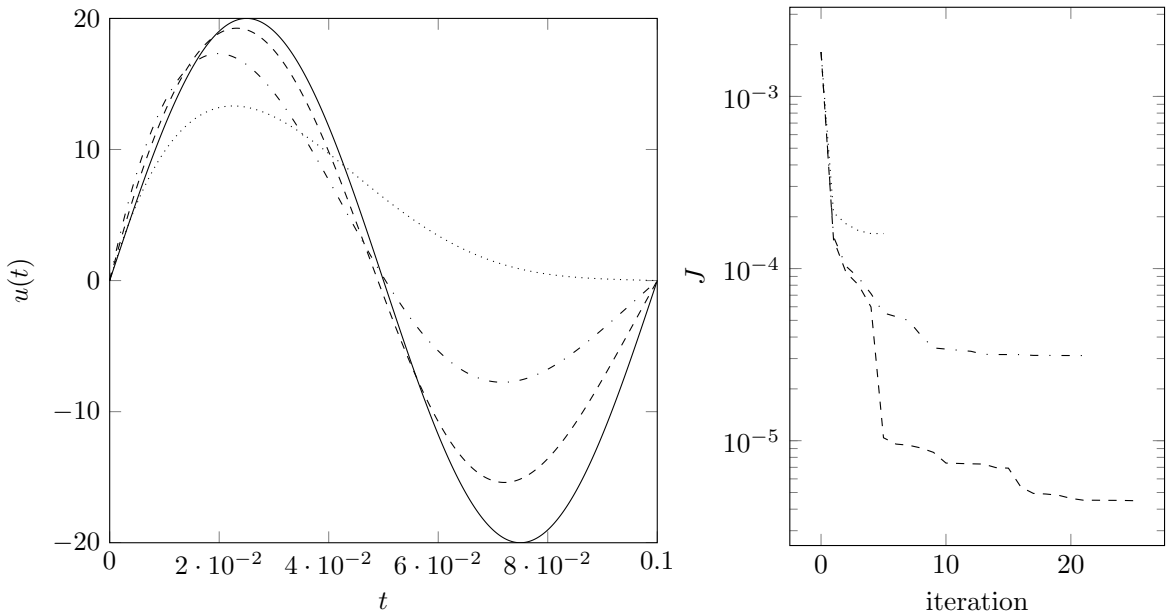

	\begin{subfigure}[b]{0.55\textwidth}
		\includegraphics[page=4,width=\textwidth]{Plots}
		\caption{The control $u(t)$ for different weights $\nu$ as well as the control of the target density $\rho_d$.}
		\label{num:tracking:control}
	\end{subfigure}
	\begin{subfigure}[b]{0.4\textwidth}
		\includegraphics[page=3,width=\textwidth]{Plots}
		\caption{The cost functional $J$ is considerably reduced depending on $\nu$. 
		}
		\label{num:tracking:cost}
	\end{subfigure}
	\caption{The tracking problem.}\label{num:tracking}
\end{figure}

The second experiment is as in \cite{CastroWerschnikGross2012}. 
In this case, we consider two electrons in the 
following asymmetric double well potential 
\begin{align*}
	V_0(x)=\frac{1}{64}\xone^4-\frac{1}{4}\xone^2+\frac{1}{32}\xone^3+\frac{1}{2}\xtwo^2.
\end{align*}
At $t=0$, the electrons are in their ground state which is centred around the global minimum at $(x_1,x_2)=(-3.6,0)$. 
Our objective is to spatially shift this ground state, that is, 
to move the 2 electrons to the right-half space, $\xone>0$. 
For this purpose, we consider the following cost functional 
\begin{align*}
	J=\frac{\eta}{2} \int_{\xone<0} \rho(x,T) \dd x+\frac{\nu}{2} \|u\|_{H^1(0,T;\R)}^2.
\end{align*}
The stopping criterion for convergence is $\|\nabla J\|_{H^1(0,T;\R)}<5\cdot 10^{-5}$.

As the results presented in Figure \ref{num:castro:plots} show, the cost functional can be reduced by approximately 4 orders of magnitude and the density is almost completely localized in the desired set, since $\int_{\xone<0} \rho(x,T) \dd x=2.6\cdot 10^{-4}$. 
This result is obtained with $\nu=10^{-7}$. For smaller values of $\nu$, the stopping criterion is matched later and the objective can be 
further improved.

\begin{figure}[ht]
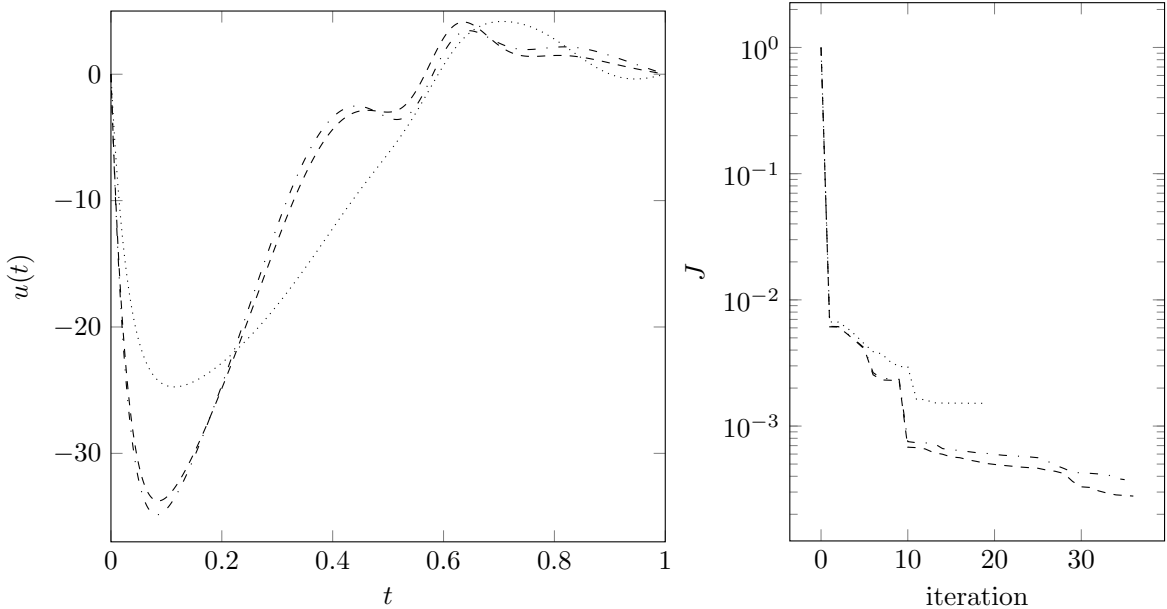

	\begin{subfigure}[b]{0.55\textwidth}
		\includegraphics[page=6,width=\textwidth]{Plots}
		\caption{The amplitude of the electric control field $u(t)$ for different weight parameters $\nu$.}
	\end{subfigure}
	\begin{subfigure}[b]{0.4\textwidth}
		\includegraphics[page=5,width=\textwidth]{Plots}
		\caption{The cost functional as a function of the iteration for different $\nu$.}
	\end{subfigure}
	\caption{Moving the density to a certain region in space with a laser pulse.}\label{num:castro:plots}
\end{figure}

\section{Conclusion}\label{sec:conclusion}
In this paper, an optimal control framework for the 
time-dependent Kohn-Sham model was presented and analyzed. 
The purpose of this work was to provide a mathematical rigorous proof of the existence of optimal controls and their 
characterization as solutions to the corresponding 
optimality system. 
For this purpose, the differentiability properties of the nonlinear Kohn-Sham potential and of the TDKS equation were investigated. 
A proof of the existence of a minimizer was given and 
the first-order optimality system was discussed. 
To validate the proposed optimization framework, an 
efficient Strang splitting discretization scheme was 
implemented and validated. The optimization problem 
was solved using a NCG scheme. 
Results of numerical experiments demonstrated the 
control ability of our scheme with different control problems.

\appendix
\section{Derivation of the optimality system}
\label{sec:DerivationOfOptSystem}
The derivation of the optimality system is done by calculating the directional derivatives of the Lagrange functional 
\eqref{LagrangeL}  
with respect to the states $\psi_j$, the adjoint variables $\lambda_j$, and the control $u$.

\subsection{The forward equation (derivative by \texorpdfstring{$\lambda_j$}{lambda})}
Since $J$ does not depend on $\Lambda$ and $L_1$ depends linearly on $\lambda_j$, so the derivative by $\lambda_j$ gives the TDKS equation for $\psi_j$:
\begin{align}
	i\pabl{\psi_j(x,t)}{t}=\left(-\nabla^2+V_{ext}(x,t,u)+V_{Hxc}(x,t,\rho) \right)\psi_j(x,t).
\end{align}

\subsection{The adjoint equation (derivative by \texorpdfstring{$\psi_j$}{phi})}\label{subsec:DerivationOfAdjoint}
Both $L_1$ and the target term $J$ depend on $\Psi$. We start with $L_1$ which is split into two terms: the linear part $L_2$ and the nonlinear part from the Kohn-Sham potential $L_3$. We have 
\begin{align*}
	L_1&=\underbrace{\Re\left( \sum_{j=1}^N\int_0^T \int_\Omega\left( i\pabl{\psi_j(x,t)}{t}-\left(-\nabla^2+V_{ext}(x,t,u)\right)\psi_j(x,t) \right)\co{\lambda_j(x,t)} \dd x\dd t\right)}_{L_2}\\ &\quad-\underbrace{\Re\left( \sum_{j=1}^N\int_0^T \int_\Omega V_{Hxc}(x,t,\rho) \psi_j(x,t) \co{\lambda_j(x,t)} \dd x\dd t \right)}_{L_3}.
\end{align*}

We begin with the linear part $L_2$.
Differentiating $L_2$ by $\psi_j$ gives the TDKS equation for $\lambda_j$ as it is linear in $\psi_j$ and the sign from integration by parts is canceled by the complex conjugation. One boundary term $B_1$ appears.
\begin{align}
	\nabla_{\psi_j} L_2=i\pabl{\lambda_j(x,t)}{t}-\left(-\nabla^2+V_{ext}(x,t,u) \right)\lambda_j(x,t)+B_1.
\end{align}

For the calculation details, we fix one particle index $j$:
\begin{align*}
	&\skalar{\nabla_{\psi_j} L_2}{\delta \psi_j}=\lim_{α\rightarrow 0^+}\frac{L_2(\{\psi_k+α \delta_{kj}\delta\psi_j\},u)-L_2(\{\psi_k\},u)}{α}\\
	&=\lim_{α\rightarrow 0^+}\frac{1}{α}\Re \sum_{k}\int_0^T \int_\Omega\left(i \pabl{(\psi_k+\delta_{kj}α \delta \psi_k)}{t}-\left( -\nabla^2+V_{ext}\right)(\psi_k+\delta_{kj}α\delta \psi_k)\right)\co{\lambda_k} \dd x\dd t\\
	&-\Re \sum_{k}\int_0^T \int_\Omega\left(i \pabl{\psi_k}{t}-\left( -\nabla^2+V_{ext}\right)\psi_k\right)\co{\lambda_k} \dd x\dd t\\
	&=\Re\int_0^T \int_\Omega \left( i \pabl{\delta \psi_j}{t}-\left(-\nabla^2+V_{ext} \right)\delta\psi_j \right)\co{\lambda_j} \dd x \dd t.
\end{align*}

Now, we use integration by parts and use the fact that 
$\lambda_j$ and $\psi_j$ are zero on the boundary. We obtain 
\begin{align*}
	\skalar{\nabla_{\psi_j} L_2}{\delta \psi_j}&=\Re\int_0^T \int_\Omega \left( i \pabl{\lambda_j}{t}-\left(-\nabla^2+V_{ext} \right)\lambda_j \right)\co{\delta\psi_j} \dd x \dd t\\
	&+ \Re \int_\Omega -i \bigl(\lambda_j(x,T)\co{\delta\psi_j(x,T)}-\lambda_j(x,0)\co{\delta\psi_j(x,0)}\bigr)\dd x.
\end{align*}
At time $t=0$, all wave functions have to fulfill the initial condition, hence $\delta\psi_j(x,0)=0$. We are left with one boundary term 
\begin{align}
	B_1=-\Re \int_\Omega i \lambda_j(x,T)\co{\delta\psi_j(x,T)}\dd x,
\end{align}
which can be removed by prescribing the terminal condition $\lambda_j(x,T)=0$ in the case of $\eta=0$, 
 otherwise by the terminal condition derived below. We obtain 
\begin{align}
	\skalar{\nabla_{\psi_j} L_2}{\delta \psi_j}=\Re\int_0^T \int_\Omega \left( i \pabl{\lambda_j}{t}-\left(-\nabla^2+V_{ext} \right)\lambda_j \right)\co{\delta\psi_j} \dd x \dd t+B_1.
\end{align}

The derivative of the nonlinear part $L_3$ is given in the Lemmas \ref{lem:VxPsiGateaux} and \ref{lem:VHFrechet}.

\subsubsection{Derivative of the target functional}\label{appendix:targetfunctional}
Consider the trajectory term $J_β=\frac{β}{2}\int_0^T\int_\Omega(\rho(x,t)-\rho_d(x,t))^2\dd x\dd t$, and notice the 
following 
\begin{align*}
	|\psi_j+α \delta \psi_j|^2&=|\psi_j|^2+2\Re (\co{\psi_j}α \delta \psi_j)+|α \delta \psi_j|^2,\\
	|\psi_j+α \delta \psi_j|^4&=|\psi_j|^4+4|\psi_j|^2\Re (\co{\psi_j}α \delta \psi_j) +\mathcal{O}(α^2).
\end{align*}
With this preparation, we habe
\begin{align*}
	&\skalar{\nabla_{\psi_j} J_β}{\delta \psi_j}
	=\lim_{α\rightarrow 0+}\frac{β}{2α}\int_0^T\int_\Omega \left(|\psi_j+α \delta \psi_j|^2+(\sum_{i\neq j}|\psi_i|^2-\rho_d) \right)^2 -\frac{β}{2}(\rho-\rho_d)^2\dd x\dd t\\
	&=\lim_{α\rightarrow 0+}\frac{β}{2α}\int_0^T\int_\Omega \left( |\psi_j+α \delta \psi_j|^4+(\sum_{i\neq j}|\psi_i|^2-\rho_d)^2+2|\psi_j+α \delta \psi_j|^2(\sum_{i\neq j}|\psi_i|^2-\rho_d)\right) -\frac{β}{2}(\rho-\rho_d)^2\dd x\dd t\\
	&=2β \int_0^T\int_\Omega|\psi_j|^2\Re (\co{\psi_j} \delta \psi_j) + \Re (\co{\psi_j} \delta \psi_j)(\sum_{i\neq j}|\psi_i|^2-\rho_d)\dd x\dd t\\
	&=2β \Re \int_0^T\int_\Omega(\rho-\rho_d)\psi_j \co{\delta\psi_j} \dd x\dd t \tag{b2}.
\end{align*}

Similarly, we find for the terminal term $J_\eta=\int_\Omega \rho(x,T)\chi_A(x)\dd x$ the following 
\begin{align*}
	\skalar{\nabla_{\psi_j} J_\eta}{\delta \psi_j}
	&=\lim_{α\rightarrow 0+}\frac{η}{2α}\int_\Omega \chi_A\left(\sum_{i=1}^N|\psi_i(x,T)+α\delta_{ij} \delta \psi_j(x,T)|^2-\sum_{i=1}^N|\psi_i(x,T)|^2 \right)\dd x\\
	&=η \Re \int_\Omega \chi_A(x) \psi_j(x,T) \co{\delta\psi_j(x,T)} \dd x.
\end{align*}

\bibliographystyle{amsalphaarxiv}
\bibliography{Literatur}
\end{document}